\documentclass[a4paper]{article}
\usepackage[utf8]{inputenc}
\usepackage{fullpage}

\usepackage{hyperref}
\usepackage[auth-lg]{authblk}
\usepackage[english]{babel}
\usepackage{amsmath, amsfonts, amssymb, amsthm}
\usepackage{graphicx}
\usepackage{mathtools}
\usepackage{subfigure}
\usepackage{multirow}
\usepackage{array}
\renewcommand{\arraystretch}{1.3}
\newcolumntype{x}[1]{>{\centering\arraybackslash\hspace{0pt}}p{#1}}
\usepackage{color}
\usepackage{comment}
\usepackage{quiver}
\usepackage{svg}
\usepackage{booktabs}
\usepackage{tabularx}
\usepackage[page]{appendix}
\usepackage[shortlabels]{enumitem}

\usepackage{algorithm,algpseudocode}

\definecolor{fom}{HTML}{003391}
\definecolor{ldm}{HTML}{21a300}
\definecolor{dldm}{HTML}{ff4f9d}
\definecolor{dldmtheta}{HTML}{ffba3b}

\newtheorem{theorem}{Theorem}[section]

\newtheorem{proposition}[theorem]{Proposition}
\theoremstyle{definition}
\newtheorem{definition}[theorem]{Definition}

\theoremstyle{remark}
\newtheorem{remark}[theorem]{Remark}

\numberwithin{equation}{section}

\newcommand{\bgamma}{{\boldsymbol{\gamma}}}
\newcommand{\bbeta}{{\boldsymbol{\beta}}}
\newcommand{\beps}{{\boldsymbol{\epsilon}}}
\newcommand{\bveps}{{\boldsymbol{\varepsilon}}}
\newcommand{\btau}{{\boldsymbol{\tau}}}
\newcommand{\bmu}{{\boldsymbol{\mu}}}

\newcommand{\bdelta}{\boldsymbol{\delta}}
\newcommand{\btheta}{{\boldsymbol{\theta}}}

\newcommand{\bxi}{{\boldsymbol{\xi}}}
\newcommand{\bu}{{\mathbf{u}}}
\newcommand{\tbu}{{\tilde{\mathbf{u}}}}
\newcommand{\hbu}{{\hat{\mathbf{u}}}}

\newcommand{\bx}{{\mathbf{x}}}
\newcommand{\by}{{\mathbf{y}}}
\newcommand{\bz}{{\mathbf{z}}}
\newcommand{\tbz}{{\tilde{\mathbf{z}}}}

\newcommand{\bn}{{\mathbf{n}}}
\newcommand{\bb}{{\mathbf{b}}}

\newcommand{\bh}{{\mathbf{h}}}

\newcommand{\bff}{{\mathbf{f}}}
\newcommand{\bS}{{\mathbf{S}}}
\newcommand{\bM}{{\mathbf{M}}}

\newcommand{\Dt}{{\Delta t}}

\newcommand{\dldm}{{\Delta\text{LDM}}}

\makeatletter
\let\OldStatex\Statex
\renewcommand{\Statex}[1][3]{%
  \setlength\@tempdima{\algorithmicindent}%
  \OldStatex\hskip\dimexpr#1\@tempdima\relax}
\makeatother

\begin{document}
\setlength\parindent{0pt}
\title{On latent dynamics learning in nonlinear reduced order modeling}

\author[1]{Nicola Farenga} 
\author[1]{Stefania Fresca}
\author[1]{Simone Brivio}
\author[1]{Andrea Manzoni}
\affil[1]{\normalsize MOX, Department of Mathematics, Politecnico di Milano, Milan, Italy}
\date{}
\maketitle
\vspace{-1.2cm}
\begin{center}
{\small \{\texttt{nicola.farenga}, \texttt{stefania.fresca}, 
\texttt{simone.brivio},
\texttt{andrea1.manzoni}\} \texttt{@polimi.it}}
\end{center}

\begin{abstract}
In this work, we present the novel mathematical framework of \textit{latent dynamics models} (LDMs) for reduced order modeling of parameterized nonlinear time-dependent PDEs. Our framework casts this latter task as a nonlinear dimensionality reduction problem, while constraining the latent state to evolve accordingly to an (unknown) dynamical system, namely a latent vector ordinary differential equation (ODE). A time-continuous setting is employed to derive error and stability estimates for the LDM approximation of the full order model (FOM) solution. We analyze the impact of using an explicit Runge-Kutta scheme in the time-discrete setting, resulting in the $\dldm$ formulation, and further explore the learnable setting, $\dldm_\theta$, where deep neural networks approximate the discrete LDM components, while providing a bounded approximation error with respect to the FOM. 
Moreover, we extend the concept of parameterized Neural ODE -- recently proposed as a possible way to build data-driven dynamical systems with varying input parameters -- to be a convolutional architecture, where the input parameters information is injected by means of an affine modulation mechanism, while designing a convolutional autoencoder neural network able to retain spatial-coherence, thus enhancing interpretability at the latent level. Numerical experiments, including the Burgers' and the advection-reaction-diffusion equations, demonstrate the framework’s ability to obtain, in a multi-query context, a \textit{time-continuous} approximation of the FOM solution, thus being able to query the LDM approximation at any given time instance while retaining a prescribed level of accuracy. Our findings highlight the remarkable potential of the proposed LDMs, representing a mathematically rigorous framework to enhance the accuracy and approximation capabilities of reduced order modeling for time-dependent parameterized PDEs.
\end{abstract}

\section{Introduction}
The numerical solution of parameterized nonlinear time-dependent partial differential equations (PDEs) by means of traditional high-fidelity techniques \cite{Thomee2006, Quarteroni2008}, e.g. the finite element method, entails large computational costs, primarily stemming from the large number of degrees of freedom (DoFs) involved in their spatial discretization, and further compounded by the need to select a temporal discretization that ensures the required level of accuracy \cite{CFL}. As a result, the computationally intensive nature of such techniques prevents them to be employed in \textit{real-time} and \textit{multi-query} scenarios, where the PDE solution has to be computed for multiple parameters instances, within reasonable time-frames.

Reduced order models (ROMs) address such issue by introducing a low-dimensional representation approximating the high-fidelity solution of the full order model (FOM), thereby mitigating the computational burden arising from the solution of the original high-dimensional problem. The core assumption underlying such techniques is that FOM solutions lie on a low-dimensional  manifold, i.e., \textit{the solution manifold}. In particular, two main tasks have to be performed in order to construct a ROM: { \em (i)} the computation of a suitable \textit{trial manifold} approximating the solution manifold, namely the set of PDE's solutions by varying time and parameters, and {\em (ii) } the identification of a \textit{latent dynamics} embedding the reduced representation onto the trial manifold. Traditional reduced basis (RB) methods \cite{quarteroni2015reduced, BennerGugercinWillcox, cicci2023projection} approximate the {solution manifold} by means of a \textit{linear trial subspace}, spanned by a set of basis functions. In the parameterized setting, RB methods face significant challenges with \textit{nonaffine} parameter dependencies, namely, when the operators arising from the problem's spatial discretization cannot be expressed as a linear combination of parameter-independent matrices weighted by parameter-dependent coefficients \cite{NEGRI2015431}. This nonaffine structure is present in many scenarios, for instance when the parameter dependence is nonlinear, in problems featuring geometrical deformations, as well as in multi-physics systems. In such settings, the components involved in the decomposition of the FOM operators needs to be re-assembled for each specific time and parameters instances, thus breaking the efficiency of the \textit{online-offline} procedure.
Moreover, RB methods rely on linear projections which limit their expressivity for problems exhibiting slowly-decaying \textit{Kolmogorov n-width} \cite{kolmogorov, greif2019decay, peherstorfer2022breaking, Nonino_2023}. This implies that achieving substantial dimensionality reduction becomes impossible, as the intrinsic low-dimensional structure cannot be accurately captured via small linear subspaces.
Furthermore, the intrusive nature of such techniques requires access to the high-fidelity operators for the construction of the \textit{reduced order dynamics} governing the evolution of the reduced state. This aspect represents a restrictive requirement in those cases in which neither the algebraic-structure of the original problem nor the operators are accessible, or when black-box FOM solvers are employed. 

The previous limitations have motivated the recent adoption of data-driven methods \cite{Hesthaven_Pagliantini_Rozza_2022}, addressing the common issues related to traditional linear ROMs via machine learning-based techniques. 
In the specific context of reduced order modeling for nonlinear parameterized time-dependent problems, multiple deep neural network (DNN)-based approaches have been proposed to address two primary tasks: {\em (i)} nonlinear dimensionality reduction, by means of convolutional autoencoders (AEs) \cite{gonzalez2018deep,san2019artificial,lee2020model} or geometry-informed techniques \cite{franco2023geom, franco2023meshinformed}; and {\em (ii)} modeling the latent dynamics of the resulting time-evolving parameterized reduced representation, via regressive approaches \cite{fresca2021comprehensive, mucke2021, pant2021, fresca2022pod}, recurrent strategies \cite{kani2017drrnndeepresidualrecurrent, Conti_2023}, or by coupling these two modeling paradigms \cite{10.3934/mine.2023096}.
In particular, recent DL-based \textit{latent dynamics learning} approaches have predominantly relied on recurrent neural networks (RNNs) architectures \cite{doi:10.1098/rspa.2017.0844, vlachas2022multiscale} and autoregressive schemes \cite{wu2022learning, li2024latent}, aiming at modeling the temporal dependency of the latent state in a sequential manner.

However, both traditional and DL-based ROMs are intrinsically tied to the temporal discretization employed during the offline training phase. This aspect represents a critical constraint, restricting ROMs' ability to adapt to different temporal resolutions during the online phase. As a consequence, it is often necessary to rebuild the reduced basis or undergo additional fine-tuning stages, thereby entailing increased offline computational costs. 
To this end, a central aspect in reduced order modeling resides in whether the ROM solution is a meaningful \textit{time-continuous approximation} of the FOM, in which case it could be possible to query the learned ROM at any given time by retaining a prescribed level of accuracy.

A recently proposed DL-based reduced order modeling approach consists in leveraging {neural ordinary differential equations} (NODEs) \cite{NEURIPS2018_69386f6b} coupled with AEs, resulting in the AE-NODE architecture, in order to implicitly learn the vector field parameterizing the latent dynamics \cite{Lee_2021, Di_Sante_2022}. 
In particular, as highlighted by \cite{krishnapriyan2023learning}, the \textit{continuous-time}\footnote{For a detailed description of continuous-depth and continuous-time models, we refer to Appendix \ref{appendix:continuous-time}.} nature of NODEs, employed in time-series modeling setting for the approximation of dynamical systems' evolution, have demonstrated promising capabilities at the task of zero-shot generalization across temporal discretizations, when adopting higher-order Runge-Kutta (RK) schemes (ODE-Nets). However, as already pointed out by \cite{queiruga2020continuousindepth}, models characterized by a continuous-time inductive bias, such as ODE-Nets, do not necessarily guarantee that the learned representation will exhibit a time-continuous approximation property. This is usually due to the fact that learning settings are inherently time-discrete, thus possibly leading to overfitting with respect to the training temporal discretization.
Despite the centrality of this aspect, the time-continuous approximation properties of the AE-NODE paradigm have not yet been addressed in a multi-query reduced order modeling context.

\subsection{Contributions}
\begin{figure}[t]
    \centering
    \includegraphics[width=.875\textwidth]{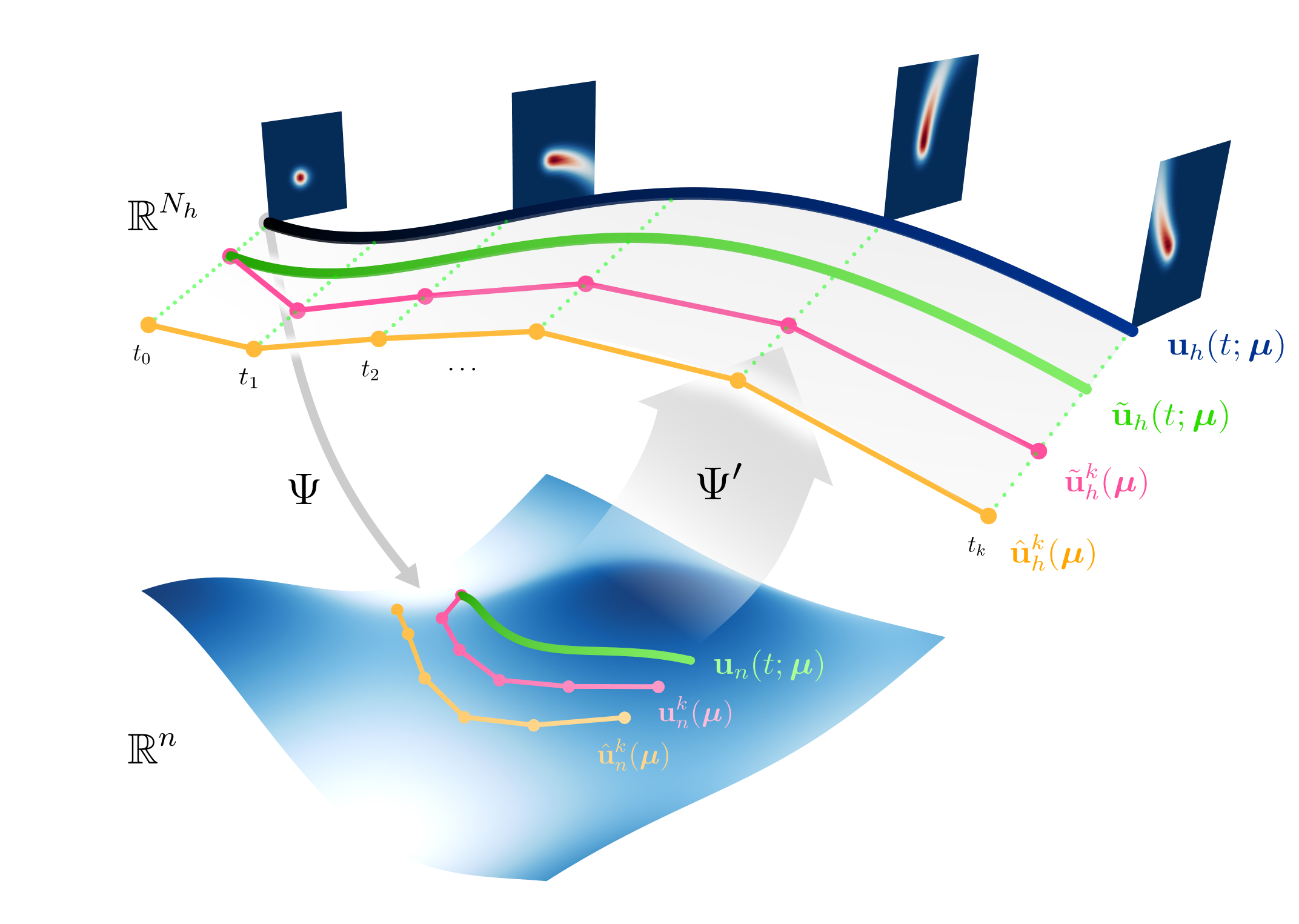}
    \caption{Different levels of approximation in the proposed LDM framework, illustrating: {\em (i)} the time-continuous full-order model \textcolor{fom}{$\bu_h(t;\bmu)$}, the time-continuous LDM approximation \textcolor{ldm}{$\tilde{\bu}_h(t;\bmu)$} with the associated latent dynamics \textcolor{ldm}{$\bu_n(t;\bmu)$}; {\em (ii)}  the time-discrete approximation \textcolor{dldm}{$\tilde{\bu}_h^k(\bmu)$} arising from the numerical solution of the latent dynamics \textcolor{dldm}{$\bu_n^k(\bmu)$}, leading to the notion of $\dldm$; {\em (iii)}  the learned approximation \textcolor{dldmtheta}{$\hat{\bu}_h^k(\bmu)$} and the corresponding learned latent dynamics \textcolor{dldmtheta}{$\hat{\bu}_n^k(\bmu)$}, associated to the learnable $\dldm_\theta$. Here, $\Psi$ and $\Psi'$ denote the nonlinear projection and reconstruction maps, respectively.}
\end{figure}

In this work, we aim at:
\begin{itemize}
    \item introducing the \textit{latent dynamics model} (LDM) mathematical framework and rigorously demonstrating that it is able to provide an accurate and time-continuous approximation of the FOM solution, arising from the semi-discretization of parameterized nonlinear time-dependent PDEs;
    \item characterizing the resulting reduced order modeling framework based on the AE-NODE architecture \cite{Lee_2021, Di_Sante_2022, legaard2022constructing, Chen_2022}, which combines {\em (a)} autoencoders (AEs) to reduce the dimensionality of the state to a handful of latent coordinates, and {\em (b)} ODE-Nets to describe their dynamics.
\end{itemize} 

Specifically, the problem of nonlinear dimensionality reduction for parameterized dynamical systems is first addressed, highlighting the role of the nonlinear projection and reconstruction maps 
and motivating the introduction of a unified framework addressing both the problem of dimensionality reduction and latent dynamics modeling. Thus, the novel concept of \textit{latent dynamics model} (LDM) is introduced, as a mathematical framework formalizing the problem of computing an accurate and time-continuous approximation of the FOM solution which, at the best of our knowledge, is currently missing in the scientific literature.

The LDM framework is introduced in a time-continuous setting in order to consecutively analyze {\em (i)} the impact of employing a numerical integration scheme for the solution of the latent dynamics, leading to a time-discrete LDM formulation, namely $\dldm$; {\em (ii)} the approximation capabilities in a learnable setting, where DNNs are adopted to approximate the components involved in the $\dldm$, via a $\dldm_\theta$ of learnable weights $\theta$.

More precisely, at the time-continuous level, suitable error and stability estimates for the LDM approximation of the FOM solution are derived. In the time-discrete setting, the role of an explicit Runge-Kutta (RK) scheme in approximating the solution of the latent dynamics within the $\dldm$ scheme is analyzed. In particular, convergence, consistency, and zero-stability results are provided, highlighting the inheritance of these properties by the high-dimensional $\dldm$ approximation under a Lipschitz requirement on the nonlinear projection and reconstruction maps. Additionally, an error decomposition formula that accounts for the different error sources is derived. 
Finally, in the $\dldm_\theta$ learnable setting, the issue of approximating the discrete LDM components using DNNs within a parameterized AE-NODE architecture is addressed. Specifically, the approximation capabilities of the $\dldm_\theta$ with respect to the FOM solution are demonstrated through the derivation of a suitable error bound.

From a DL perspective, the novel architectural choices, underlying the proposed $\dldm_\theta$ formulation, are described, departing from the traditional AE-NODE setting. In particular, a fully-convolutional AE architecture, based on a prior interpolation operation, is considered. It aims at avoiding input reshape operations and exploiting spatial correlations, with the possibility to retain \textit{spatially-coherent} information at the latent level. A parameterized convolutional NODE architecture is introduced for the approximation of the latent dynamics. Specifically, an \textit{affine-modulation} scheme is adopted to include the parametric information into the latent representation.

The introduced LDM framework is tested on high-dimensional dynamical systems arising from the semi-discretization of parameterized nonlinear time-dependent PDEs. Specifically, the nonlinearity can either result from the presence of nonlinear terms into the governing equations or from nonlinear parametric dependencies, leading to nonaffine scenarios that are difficult to address using traditional ROMs.
In particular, two problems are considered: {\em (i)} one-dimensional Burgers' equation, {\em (ii)} two-dimensional advection-reaction diffusion equation. 
These test cases are intended to demonstrate that the introduced framework exhibits a {time-continuous approximation} property, resulting in the capability of zero-shot generalization to finer temporal discretizations than the one employed at training time.

In summary, the paper is organized as follows: after an extensive literature review, Section 2 details the problem formulation and introduces the time-continuous approximation property. Section 3 presents the LDM framework in a time-continuous setting, and then covers the discrete and learnable LDM formulations. Section 4 discusses the architectural choices in the DL context. Numerical experiments and results are presented in Section 5. The paper concludes with Section 6, summarizing the main findings and outlining possible future directions.

\subsection{Literature review}
The concept of latent dynamics has been widely explored in the field of deep learning in order to properly model the evolution of a reduced representation (latent state) of high-dimensional temporally-evolving observations, with applications ranging from planning and control \cite{watter2015embed, hafner2019learning} to computer graphics \cite{https://doi.org/10.1111/cgf.13645}. 
The concept emerged with the application of autoencoders (AEs) in a dynamical context, to reduce the dimensionality of time-evolving data, leading to the development of \textit{dynamical} AEs \cite{girin2020dynamical}. 
In particular, modeling the temporal and parametric relations of reduced representations is a central aspect in the construction of ROMs, as highlighted in Section \ref{sec-2}. In the following, we examine the main approaches used to model latent state temporal and parametric relations in the context of DL-based reduced-order and surrogate PDEs modeling, with a focus on parameterized multi-query settings.

\paragraph{Direct (regressive) approaches.}

A significant class of data-driven approaches models the latent dynamics as a map from the time-parameter space to the reduced manifold using regression-based frameworks.
These methods aim to directly learn such mappings via feedforward NNs \cite{WANG2019289, fresca2021comprehensive}, explicitly capturing the dependence of the latent state on temporal and parametric inputs. 
This approach was extended in \cite{fresca2022pod} by adopting a hybrid POD-AE architecture to reduce the offline computational costs associated with expensive training procedures, resulting in the POD-DL-ROM architecture. Additionally, the approximation capabilities of DL-ROMs have been analyzed in \cite{franco2023deep, brivio2023error}.
Similar approaches are employed into the field of \textit{operator learning} (OL) \cite{kovachki2023neural}, where recent observations suggest that learning operators in the latent space leads to improvements in accuracy and computational efficiency \cite{Kontolati2024}. In \cite{brivio2024ptpi} the concepts of DL-ROMs and OL are combined within a physics-informed framework, where a fine-tuning strategy is proposed to improve generalization capabilities in a small data regime.

\paragraph{Recurrent \& autoregressive approaches.}
The landscape of data-driven reduced order modeling methods based on latent dynamics learning has been primarily shaped by the adoption of recurrent architectures as the main time-advancing scheme within the latent space, which, unlike direct approaches, leverage the sequential nature of the time-dependent reduced state. More broadly speaking, recurrent neural networks (RNNs) have found extensive application in high-dimensional dynamical systems modeling \cite{doi:10.1098/rspa.2017.0844, vlachas2020backpropagation} and in the construction of DL-based ROMs for time-dependent problems \cite{Conti_2023, TORZONI2023110376}.
RNN-based techniques have been first coupled with traditional linear dimensionality reduction techniques, adopting long short-term memory (LSTM) \cite{6795963, 6789445} units to model the reduced state evolution obtained by means of POD \cite{kani2017drrnndeepresidualrecurrent, https://doi.org/10.1002/fld.4416, LARIO2022111475}.
Considering AE-based architectures, \cite{gonzalez2018deep} proposes a recurrent convolutional AE coupled with LSTMs to model the latent state evolution. 
Among hybrid approaches, leveraging both POD and AEs in a recurrent setting, we cite the POD-LSTM-ROM architecture \cite{10.3934/mine.2023096}.
Concerning fluid flow modeling applications, LSTMs units are also adopted in \cite{https://doi.org/10.1111/cgf.13620}, where a temporal context, encoding the current state and a series of previous states, is leveraged by the RNN to produce a latent sequence which is decoded back to the high-dimensional space. Similarly, the framework of \textit{learning the effective dynamics} (LED), proposed in \cite{vlachas2021accelerated, vlachas2022multiscale}, relies on an AE-LSTM architecture and adopts an end-to-end formulation to simultaneously learn the low-dimensional latent representation and its dynamics, rather than using a two-stage approach. This framework has been further developed in \cite{KICIC2023116204}, incorporating uncertainty quantification techniques and leveraging continual learning. 
Regarding autoregressive approaches, \cite{wu2022learning} propose the \textit{latent evolution of partial differential equations} (LE-PDE) framework, which adopts a latent time-advancing scheme based on a residual update. This approach employs both a dynamic and a static encoder, to respectively reduce the full-order state dimensionality and embed system parameters, allowing for a more flexible way of handling parametric information. Similarly, \cite{li2024latent} adopts a residual update to learn a latent convolutional propagator.

\paragraph{Dynamically-motivated approaches.}
The inherent dynamical structure of the FOM has inspired the development of approaches that leverage ODE-based formulations and physics inductive biases in order to capture the underlying dynamics of the modeled problem.
The sparse identification of nonlinear dynamics (SINDy)  framework, proposed in \cite{doi:10.1073/pnas.1517384113}, and extended by \cite{Kaheman_2022} to leverage automatic differentiation through RK schemes, has been adopted in the context of latent dynamics modeling \cite{doi:10.1073/pnas.1906995116, Fukami_Murata_Zhang_Fukagata_2021, doi:10.1098/rspa.2023.0422}, allowing to perform latent system identification via sparse regression on a set of candidate basis functions.
While ensuring an interpretable representation of the latent dynamics, such a method faces difficulties in the parameterized context, lacking a proper parameterization mechanism.
To address this limitation, \cite{fries2022lasdi, bonneville2024comprehensive} propose to identify a set of local models, each one covering a sub-region of the parameter space, whose combination allows guaranteeing a specified accuracy level across the entire parameter space. 
A parameterized extension of the AE-SINDy method is presented in \cite{conti2023reduced}, in which the reduced dynamics is represented via a parameterized ODE system and continuation algorithms are adopted to track periodic responses' evolution. Other approaches include \textit{dynamic mode decomposition} (DMD) \cite{schmid2010dynamic, doi:10.1137/22M1481658}, and models based on \textit{Koopman operator} (KO) theory \cite{doi:10.1073/pnas.17.5.315, brunton2016koopman, otto2019linearly}. 
In particular, in the multi-query setting, \cite{DUAN2024112621} proposes a DMD-based approach to model parameterized latent dynamics by fitting multiple latent models for different parameter instances, and then relying on interpolation during the online stage.
A variation of \cite{vlachas2022multiscale} based on KO and the Mori-Zwanzig formalism is proposed in \cite{menier2023interpretable}.

\paragraph{Implicit dynamics learning.}
At the intersection of deep learning and differential equations \cite{E2017}, the recently introduced concept of \textit{neural ODEs} (NODEs) \cite{NEURIPS2018_69386f6b, kidger2022neuraldifferentialequations} has enabled a novel paradigm in modeling time-dependent data \cite{NEURIPS2019_42a6845a, huang2020learning}, departing from traditional discrete-time approaches. NODEs' ability to learn continuously-evolving representations, by implicitly learning the vector field parameterizing the underlying dynamics, has bridged the gap between DNNs and continuous dynamical systems \cite{ayed2019learningdynamicalsystemspartial, djeumou2022neural, LINOT2023111838}. 
This has motivated their adoption in the context of reduced order modeling for time-dependent nonlinear problems \cite{Di_Sante_2022, farenga2022neural}, by coupling NODEs with AEs, leading to the AE-NODE architecture. Such a paradigm has been adapted to physics-informed settings by \cite{Sholokhov2023} to address data-scarce scenarios.
Specifically, in the context of multi-query reduced order modeling, \cite{Lee_2021} extended the concept of NODEs to include the parametric information, leading to the concept of \textit{parameterized neural ODEs} (PNODEs), resulting in improved generalization capabilities in a parameterized context and extending the applicability of AE-(P)NODE-based ROMs in a multi-query setting.
Recently, NODEs-based latent dynamics modeling has been extended towards a spatially-continuous formulation via implicit neural representation (INR)-based decoders \cite{yin2022continuous, wen2023reduced}.

\section{Towards a time-continuous ROM approximation}
\label{sec-2}
In this section, we introduce the problem we are interested in and review the construction of a ROM by providing a brief overview of the main steps involved, namely, the computation of a suitable trial manifold and the identification of a reduced (latent) dynamics on it.

\subsection{Problem formulation}
Our focus is on the solution of time-dependent nonlinear parameterized PDEs, whose parametric dependence is characterized by a set of parameters $\bmu\in\mathcal{P}\subset \mathbb{R}^{n_\mu}$, with $\mathcal{P}$ compact, representing system configurations, physical or geometrical-related properties, boundary conditions and/or initial conditions. To ensure consistency with the formulation of the methods proposed later, we adopt an algebraic approach to the problem. Indeed, by means of numerical methods (e.g., finite element method, spectral element method), the semi-discretized high-fidelity problem consists of a high-dimensional dynamical system, which we refer to as full-order model (FOM). Given an instance of the parameters $\boldsymbol \mu \in \mathcal{P}$, we thus consider the following initial value problem (IVP)
\begin{equation}
    \begin{cases}
        \dot{\bu}_h(t;\bmu) = \bff_h(t,\bu_h(t;\bmu);\bmu), & t \in (t_0,T],\\
        \bu_h(t_0;\bmu) = \bu_{0,h}(\bmu),
    \end{cases}
    \label{eq:FOM}
\end{equation}
where $\bu_h:[t_0,T]\times\mathcal{P}\rightarrow\mathbb{R}^{N_h}$ denotes the high-fidelity parameterized solution of \eqref{eq:FOM} and $\bu_{0,h}:\mathcal{P}\rightarrow\mathbb{R}^{N_h}$ the initial value. The nonlinear function $\bff_h:(t_0,T]\times \mathbb{R}^{N_h}\times \mathcal{P}\rightarrow \mathbb{R}^{N_h}$ defines the parameterized dynamics of the system.
Thus, the aim of reduced order modeling techniques is to provide an approximation of the \textit{solution manifold}
\begin{equation*}
    \mathcal{S}_h = \{\bu_h(t;\bmu): t\in[t_0,T],\bmu \in \mathcal{P}\}\subset\mathbb{R}^{N_h},
\end{equation*}
that is, the set of solutions of \eqref{eq:FOM} in a suitable time interval $t \in [t_0,T]$ and for a prescribed set of parameters $\bmu \in \mathcal{P}\subset\mathbb{R}^{n_\mu}$.

\paragraph{Solution manifold.} The approximation is performed through the construction of the so-called \textit{reduced trial manifold}, denoted by $\tilde{\mathcal{S}}_h^n$, which can be either linear or nonlinear, depending on the chosen dimensionality reduction technique.
In both linear and nonlinear cases, the fundamental assumption is that the intrinsic dimensionality, denoted by $n$, of the solution manifold $\mathcal{S}_h$ is significantly smaller than the FOM dimension $N_h$, i.e. $n\ll N_h$, formally stated as the \textit{manifold hypothesis}. Such assumption, in the general nonlinear case, translates into the existence of a nonlinear map $\Psi':\mathbb{R}^n\rightarrow\mathbb{R}^{N_h}$ such that 
\begin{equation}
    \label{eq:psi}
    \bu_n(t;\bmu) \mapsto \Psi'(\bu_n(t;\bmu)) = \tbu_h(t;\bmu) \approx \bu_h(t;\bmu),
\end{equation}
where $\bu_n(t;\bmu) : [t_0,T]\times\mathcal{P}\rightarrow\mathbb{R}^{n}$ are the ROM intrinsic coordinates, or latent variables, describing the dynamics of the system onto the low-dimensional trial manifold.
Thus, the reduced \textit{nonlinear} trial manifold can be characterized as follows 
\begin{equation*}
    \mathcal{S}_h \approx \tilde{\mathcal{S}}_h ^n= \{\Psi'(\bu_n(t;\bmu)): \bu_n(t;\bmu) \in \mathbb{R}^n,  t\in[t_0,T],\bmu \in \mathcal{P}\}\subset\mathbb{R}^{N_h},
\end{equation*}
and the associated dimensionality reduction problem takes the form
\begin{equation}
    \min_{\Psi',\Psi} \|\bu_h - \Psi'\circ\Psi(\bu_h)\|^2_{L^2([t_0,T] \times \mathcal{P}; \mathbb{R}^{N_h})}.
    \label{eq:nonlinear_rom_pb}
\end{equation}

The adoption of nonlinear dimensionality reduction paradigms has been mainly driven by the advancements in the field of DL \cite{lee2020model}, where the nonlinear maps $\Psi,\Psi'$, referred to as \textit{encoder} and \textit{decoder}, with $\Psi:\mathbb{R}^{N_h}\rightarrow \mathbb{R}^{n}$, are parameterized by NNs. Their composition $\Psi'\circ\Psi :\mathbb{R}^{N_h}\rightarrow\mathbb{R}^{N_h}$ approximating the identity $\operatorname{Id}_{N_h}$, is referred to as \textit{autoencoder} (AE) \cite{hinton1993autoencoders}, and the reduced state $\bu_n(t;\bmu) = \Psi(\bu_h(t;\bmu)) \in \mathbb{R}^n$ is known as the \textit{latent} state. Moreover, we highlight that, in the linear case, the map (\ref{eq:psi}) is represented by a matrix $V\in \mathbb{R}^{N_h\times n}$, which leads to projection-based methods.

\paragraph{Reduced dynamics.} The second main step in the construction of a ROM is represented by the identification of the reduced dynamics on the reduced trial manifold. In this regard, multiple strategies can be adopted. In a classical projection-based ROM, the time-evolution of the reduced dynamics is modeled by means of a parameterized dynamical system of dimension $n$. This involves replacing the high-fidelity solution $\bu_h(t;\bmu)$ in \eqref{eq:FOM} with the projection-based approximation $V\bu_n(t;\bmu)$, and imposing that the residual associated to the first equation of \eqref{eq:FOM} is orthogonal to the $n$-dimensional subspace spanned by the columns of the matrix $V$, thus obtaining 
\begin{equation}
\begin{cases}
    \dot{\bu}_n(t;\bmu) = V^T\bff_h(t,V\bu_n(t;\bmu);\bmu), & t \in (t_0,T],\\
    \bu_n(t_0;\bmu) = V^T\bu_{0,h}(\bmu).
    \label{eq:ROM}
\end{cases}
\end{equation}
The three main bottlenecks often arising with projection-based ROMs are: {\em (i)} the high dimension $n \gg n_{\boldsymbol{\mu}} + 1$ of the low-dimensional subspace, much larger than the intrinsic dimension of the solution manifold, {\em (ii)} the need to rely on hyper-reduction techniques, such as the Empirical Interpolation Method (EIM) and its discrete variant \cite{BARRAULT2004667, RYCKELYNCK2005346, M2AN_2007__41_3_575_0, DEIM}, to assemble the operators appearing in the ROM \eqref{eq:ROM} in order not to rely on expensive $N_h$-dimensional arrays, and {\em (iii)} their intrusive nature, due to the fact that the dynamics $\bff_h$ must be known in order to construct the ROM. 

More recently, in order to overcome these critical issues, regression-based approaches have been adopted, by modeling the relationship $(t,\bmu)\mapsto \bu_n(t;\bmu)$ via a mapping $\phi:[t_0,T]\times\mathcal{P} \rightarrow \mathbb{R}^n$. Such strategy has been employed in multiple DL-based reduced order modeling approaches \cite{hesthaven2018, fresca2021comprehensive, WANG2019289,bhattacharya2021}; indeed, due to its regression-based formulation, it avoids solving the reduced dynamical system \eqref{eq:ROM} by directly modeling the relationship between the parameters and the latent variables. However, we emphasize that, in contrast to classical projection-based ROMs, regression-based techniques lose the IVP structure of the problem. Moreover, another issue of such approaches is represented by their poor performance at time-extrapolation tasks and interpolating the solution at time instances in between the discrete training data. This has motivated the adoption of recurrent-based approaches to model the reduced dynamics, which partially solve the previous issues \cite{wang2020recurrent, 10.3934/mine.2023096}.

\subsection{Time-continuous approximation}
In the reduced order modeling context, most of the attention, at least up to now, has been placed on trying to decrease as much as possible the spatial complexity. Indeed, less attention has been focused on the modeling and treatment of the temporal evolution. Motivated by this, we aim at developing a framework whose structure resembles a dynamical system and is capable of generating ROMs that capture the underlying temporal dynamics of the modeled system, in the sense that the approximation can be queried at any arbitrary time instance by retaining a sufficient accuracy. Indeed, to enhance generalization capabilities at time interpolation and extrapolation tasks, we have identified the following key properties: 
\begin{enumerate}
\item \textit{Zero-shot time-resolution invariance.} The approximation error between the FOM and ROM solutions is almost constant as the time discretization is refined, even if the ROM is constructed using data sampled at a specific time-resolution;
\item \textit{Causality.} The approximation at the next step depends on the previous steps;
\item \textit{Initial value problem (IVP) structure.} The ROM preserves the FOM structure, and only the initial value is needed to predict the time-evolution.
\end{enumerate}

Property 1 refers to the capability of the ROM to represent the continuous dynamics of a given system -- that is, our model shall approximate a continuous differential operator, rather than only discrete points. 
This property is not granted a priori \cite{queiruga2020continuousindepth, krishnapriyan2023learning}. Indeed, several ROM techniques are fundamentally tied to the time discretization employed during the offline training stage. This dependency on time discretization carries over to recent DL-based approaches as well. For instance, pure regressive approaches may suffer from overfitting with respect to the time variable, leading to increasing errors when querying such models on a finer temporal grid than the one used during the training phase. As a result, these models eventually require rebuilding, additional training or fine-tuning stages, if they need to be tested on novel temporal discretizations, in contrast with property 1. 

Property 2  is related to the time-directionality of dynamical systems and is required at both offline training and online testing times, and it could be achieved, for instance, by means of recurrent architectures or ODE-Nets based on explicit RK schemes, given their causal nature.

Property 3 is deeply motivated by the IVP-structure of the FOM, with the aim of modeling PDEs' solution evolution by means of a model architecture which has an IVP-like structure, thus only the initial value is needed in order to predict the time evolution. So, the desired framework is different from other DL-based surrogate models employed in the literature in the context of evolutionary problems, usually recurrent or autoregressive-based, typically relying on an input context window of multiple previous high-fidelity snapshots (look-back) to produce the next step. 
From a computational perspective, this aspect compromises the full independence of the ROM from the FOM in terms of online costs, potentially limiting the efficiency of the resulting ROM.
Moreover, autoregressive approaches, despite showing short-term accuracy, often lack long-term stability, requiring ad hoc methods to promote stable long roll-outs \cite{brandstetter2022message, NEURIPS2023_d529b943}. 

The previous three concepts led us to the definition of a \textit{time-continuous approximation} property, which is an extension of the one provided in \cite{krishnapriyan2023learning}. Properties 2 and 3 affect the NN structure of the ROM solution depending only on the initial datum, the previous time instances, and the physical parameter $\bmu$, while property 1 results in a practical criterion to assess the generalization capabilities of the approximation with respect to the temporal discretization.

\begin{definition}[Time-continuous approximation]
\label{def:timecont}
Let $[t_{0}, T]\subset\mathbb{R}_+$ be a generic time interval and $\bmu \in \mathcal{P} \subset \mathbb{R}^{n_\mu}$. 
Let us denote by $\mathcal{T}_{\Delta t} = \{t_i\}_{i=0}^{N_{\Delta t}}$ a discretization of $[t_0,T]$, with $t_i = t_0+i\Dt$ and $\Dt>0$ a chosen time step.
Suppose that the ROM surrogate $\hbu_h^i(\boldsymbol{\mu}) = \boldsymbol{\Theta}(\bu_{0,h}, \{t_j\}_{j=0}^i; \bmu) \approx \mathbf{u}_h(t_i; \boldsymbol{\mu})$ is trained over data generated with a time discretization $\ \mathcal{T}_{\Dt_{train}} = \{t_k\}_{k=0}^{N_{\Dt_{train}}}$, $t_k = t_0+k\Dt_{train}$, with a time step $\Dt_{train}>0$. 
Then, the ROM solution is said to be a \textit{time-continuous approximation} if it holds
\begin{equation}
\sup_{t_i \in \mathcal{T}_{\Delta t}} || \mathbf{u}_h(t_i; \boldsymbol{\mu}) -\hbu_h^i(\boldsymbol{\mu})|| \leq C\sup_{t_k \in \mathcal{T}_{\Dt_{train}}} || \mathbf{u}_h(t_k; \boldsymbol{\mu}) - \hbu_h^k( \boldsymbol{\mu})||, \quad \forall \Delta t \le \Dt_{train}, \quad \forall \boldsymbol{\mu} \in \mathcal{P},
\label{eq:time-continuous}
\end{equation}
with $C = C(\Dt_{train},|T-t_0|)$ independent of $\Delta t$.
\end{definition}

In other words, we are asking that the maximum error over any time discretization, employing $\Dt<\Dt_{train}$, is of the same order as the maximum error over the training time discretization, that is, the training accuracy is preserved on finer time discretizations employed during testing, up to a constant that is independent of the test discretization. Such property highlights that the model has time generalization capabilities, meaning that the learned dynamics meaningfully captures the underlying phenomena at a continuous level, rather than simply fitting the discrete data points. In the following, we aim at showing that by means of latent dynamics models, it is possible to ensure the fulfillment of the time-continuous approximation property at the high-dimensional level, that is, to obtain a time-continuous ROM-based approximation $\hbu_h(t;\bmu) \in \mathbb{R}^{N_h}$.

\section{Modeling of the latent dynamics}

In the following, we formalize the concept of \textit{latent dynamics model} (LDM), in the context of dimensionality reduction techniques for parameterized nonlinear dynamical systems. Our abstract setting allows understanding the temporal behavior of such modeling framework, starting from the time-continuous setting, which can be assimilated to the case of infinite time observations,  with the end goal of deriving suitable error bound and stability estimates for the approximated solution.
Developing a learnable framework operating in a time-continuous context represents a crucial step to {\em (i)} remove the dependence on the time-step employed at training time, {\em (ii)} be able to deal with irregularly spaced time grids, and {\em (iii)} enhance long-term time-extrapolation capabilities beyond the training time horizon. In particular, such modeling framework would allow us to generalize to temporal grids much finer than the one employed during the offline stage, thus removing the need of rebuilding the ROM, redefining the model architecture, or requiring additional training and fine-tuning on the novel discretizations. The time-continuous setting will serve as starting point for the introduction of numerical integration schemes to solve the temporal dynamics in a space of reduced dimensions, while showing that the resulting time-discrete framework satisfies classical properties of temporal discretization schemes for the numerical solution of dynamical systems. Finally, we present a learnable setting where the different components are approximated by means of NNs. In particular, we aim at studying the approximation properties of the ROM solution, by showing that the approximation error between the ROM and the FOM solutions is bounded, while addressing its time-continuous approximation capability.

\subsection{Latent dynamics models}
\label{sec:ldm}

In nonlinear dimensionality reduction, a key challenge, regardless of the nature of the maps adopted in the dimensionality reduction technique, lies in modeling the dynamics of the reduced (latent) state $\bu_n(t;\bmu)$. Characterizing the problem in a time interval $[t_0,T]$, and assuming a sufficient regularity of the latent state, that is, $\bu_n \in C^1([t_0,T] \times \mathcal{P};\mathbb{R}^{n})$, the latter task entails the discovery of $\bff_n:(t_0,T]\times\mathbb{R}^n\times\mathcal{P}\rightarrow\mathbb{R}^n$ encoding the latent state dynamics, so that
\begin{equation}
    \dot{\bu}_n(t;\bmu) =  \bff_n(t,\bu_n(t;\bmu);\bmu).
\label{eq:latent_dyn_implicit}
\end{equation}
Taking advantage of \eqref{eq:latent_dyn_implicit}, in the following we properly address the problem of characterizing both the nonlinear dimensionality reduction and the identification of the latent dynamics. We first provide a formal definition of \textit{latent dynamics problem} (LDP), and we proceed by introducing the concept of \textit{latent dynamics model} (LDM), defining the mathematical context of time-continuous dimensionality reduction in a general setting.

\begin{definition}[Latent dynamics problem]\label{def:ldp}
Let $\bu_{h}:[t_0,T]\times\mathcal{P}\rightarrow \mathcal{S}_{h}\subset\mathbb{R}^{N_h}$ be a high-dimensional parameterized time-dependent state, such that $\bu_h \in C^1([t_0,T] \times \mathcal{P};\mathbb{R}^{N_h})$, evolving accordingly to the first-order IVP
\begin{equation}
    \left\{
    \begin{array}{ll}
    \dot{\bu}_{h}(t;\bmu) = \bff_{h}(t,\bu_{h}(t;\bmu);\bmu), & \quad t\in (t_0, T], \\
    \bu_{h}(t_0;\bmu)=\bu_{0,h}(\bmu), & \\
    \end{array}
    \right.
    \label{eq:ldp_hf_ivp}
\end{equation}
with $\bff_{h} : (t_0,T]\times\mathbb{R}^{N_h} \times\mathcal{P} \supset D_h \rightarrow\mathbb{R}^{N_h}$ the high-dimensional nonlinear vector field describing its dynamics, for each $\bmu \in \mathcal{P}$. Let $\bu_n: [t_0,T]\times\mathcal{P}\rightarrow \mathcal{S}_{n}\subset\mathbb{R}^{n}$ be a low-dimensional parameterized time-dependent state ($n\ll N_h$), such that $\bu_n \in C^1([t_0,T]\times \mathcal{P};\mathbb{R}^{n})$. 
Then, the low-dimensional state $\bu_n(t;\bmu)$ defines a latent dynamics for the high-dimensional state evolution $\bu_{h}(t;\bmu)$ if there exists a triple $(\Psi^*,\Psi'^*,\bff^*_n)$ solving the following \textit{latent dynamics problem}
\begin{equation}
\min_{\Psi', \bu_n}  \| \bu_h  - \Psi'(\bu_n) \|^2_{L^2([t_0,T] \times \mathcal{P}; \mathbb{R}^{N_h})} \quad
 \textnormal{ s.t. } \quad \left\{
\begin{array}{ll}
\dot{\bu}_n(t;\bmu) = \bff_n(t,\bu_n(t;\bmu);\bmu), & \quad t\in (t_0,T], \\  \bu_n(t_0;\bmu)=\Psi(\bu_{0,h}(\bmu)),
\end{array}
\right.
\label{eq:ldp}
\end{equation} 
with $\bff_n : (t_0,T]\times\mathbb{R}^n \times\mathcal{P} \supset D_n\rightarrow \mathbb{R}^n$ the low-dimensional nonlinear vector field describing the dynamics, while $\Psi:\mathbb{R}^{N_h}\rightarrow\mathbb{R}^{n}$ and $\Psi':\mathbb{R}^n\rightarrow\mathbb{R}^{N_h}$ are Lipschitz-continuous mappings.
\end{definition}

We emphasize that $D_h=(t_0,T]\times\mathcal{S}_{h}\times \mathcal{P}$ and $D_n=(t_0,T]\times\mathcal{S}_{n}\times \mathcal{P}$ indicate the domains of $\bff_h$ and $\bff_n$, respectively. Then, minimal requirements to ensure well-posedness of the high- and low-dimensional IVPs \eqref{eq:ldp_hf_ivp}-\eqref{eq:ldp} on the vector fields $\bff_h$ ($\bff_n$), are { \em (i)} continuity in $D_h$ ($D_n$), and { \em (ii)} local Lipschitz continuity in $D_h$ ($D_n$) with respect to $\bu_h$ ($\bu_n$), uniformly in $t$ and $\bmu$; in this way, we ensure local existence and uniqueness of the solutions, with $D_h$ and $D_n$ open in $(t_0, T]\times \mathbb{R}^{N_h} \times \mathcal{P}$ and $(t_0, T]\times \mathbb{R}^{n} \times \mathcal{P}$, respectively. Additionally, boundedness of $\bff_h$ and $\bff_n$ in $\bar{D}_h$ and $\bar{D}_n$, respectively, ensures global existence of the solutions of \eqref{eq:ldp_hf_ivp}-\eqref{eq:ldp}. 

\begin{definition}[Latent dynamics model]
    The triple $(\Psi,\Psi',\bff_n)$ satisfying \eqref{eq:ldp} is called \textit{latent dynamics model} for $\bu_{h}(t;\bmu)$, and $\tbu_h(t;\bmu)$ is the resulting approximation reading in an explicit form as
    \begin{equation}
        \bu_{h}(t;\bmu) \approx \tbu_h(t;\bmu) \coloneqq \Psi'\bigg(\Psi(\bu_{0,h}(\bmu)) + \int_{t_0}^t \bff_n(s,\bu_n(s;\bmu);\bmu)ds\bigg).
        \label{eq:ldm}
    \end{equation}
\end{definition}

Let us discuss the regularity properties introduced in Definition \ref{def:ldp}. Specifically, the smoothness assumptions on the high-dimensional parameterized state $\bu_h$ and its low-dimensional \textit{latent} counterpart $\bu_n$, belonging to a functional space of the form $C^1([t_0,T]\times\mathcal{P};\mathbb{R}^{*})$, entail two key properties. Firstly, it ensures that the states are continuously differentiable with respect to the temporal variable over $[t_0,T]$, secondly that the parameter-to-solution map $(t,\bmu)\mapsto \bu_n(t;\bmu)$ is Lipschitz-regular.
Moreover, we highlight that the LDP is formulated as a joint minimization problem with respect to both the maps $\Psi, \Psi'$ and the latent dynamics $\bff_n$, where the latter play a central role in the dimensionality reduction aspect of the proposed framework. In contrast to DL-ROM paradigms \cite{fresca2021comprehensive}, the role of $\Psi$ is to map only the high-dimensional initial value $\bu_{0,h}\in\mathbb{R}^{N_h}$ to $\Psi(\bu_{0,h})\in\mathbb{R}^{n}$ as the dynamics is recovered through the evolution of the latent state, via the integration of $\bff_n$. On the other hand, the mapping $\Psi'$ acts at any time instance, allowing to project the latent state evolution $\bu_n(t;\bmu)$ back to the high-dimensional space. We refer to Figure \ref{fig:ldm-commutative} for a visualization of the overall dimensionality reduction flow. Nonetheless, we emphasize that the choice of the latent dimension $n$ is essential to calibrate the accuracy of LDMs; in the following, we present a framework to suitably address this matter.

\begin{figure}[ht]
\centering
\[\begin{tikzcd}
{\bu_{0,h}} && {\bu_h(t)} & {\mathbb{R}^{N_h}} \\
\\
{\bu_{0,n}} && {\bu_n(t)} & {\mathbb{R}^n}
\arrow["\Psi"', maps to, from=1-1, to=3-1]
\arrow["{\Psi'}"', maps to, from=3-3, to=1-3]
\arrow["{\dot{\bu}_h = \bff_h}", maps to, from=1-1, to=1-3, dashed]
\arrow["{\dot{\bu}_n = \bff_n}", maps to, from=3-1, to=3-3]
\end{tikzcd}\]
\caption{Commutative diagram explaining the LDM scheme, approximating the mapping $\bu_{0,h} \mapsto \bu_h(t)$ defined by the full-order dynamics $\bff_h$, by means of a latent dynamics $\bff_n$ and the nonlinear projection and reconstruction maps $\Psi, \Psi'$.}
\label{fig:ldm-commutative}
\end{figure}
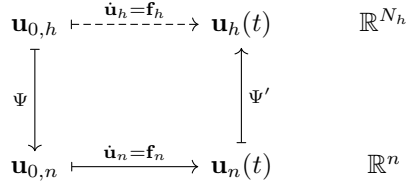

\subsubsection{Nonlinear dimensionality reduction in a dynamical setting}
In linear dimensionality reduction, the selection of the reduced dimension $n$ is determined by the decay of the singular values of the snapshot matrix \cite{quarteroni2015reduced}, while the characterization of $n$ in a nonlinear setting requires a different theoretical environment. Specifically, here, we aim at characterizing the nonlinear dimensionality reduction framework in a dynamical setting, starting from the LDM formulation. To do that, we first provide a preliminary result that unveils the contribution of $\Psi, \Psi'$ and $\bff_n$ to the approximation error, namely,

\begin{proposition}
\label{error_decomposition}
    For any $\bmu \in \mathcal{P}$, under the hypotheses of Section \ref{sec:ldm}, it holds that
    \begin{align*}
        \|\bu_h(t;\bmu) - \tbu_h(t;\bmu)\| \le  \| \bu_{0,h} &- \Psi' \circ \Psi (\bu_{0,h}) \|_{L^\infty(\mathcal{P}; \mathbb{R}^{N_h})} \\ &+ |T-t_0| \|\bff_h(\bu_h) - \mathbf{J}_{\Psi',\bu_n} \bff_n(\bu_n)) \|_{L^\infty([t_0,T] \times \mathcal{P}; \mathbb{R}^{N_h})},
    \end{align*}
    for any $t \in [t_0,T]$, where $\mathbf{J}_{\Psi',\bu_n}$ is the Jacobian of $\Psi'$ with respect to $\bu_n$.
\end{proposition}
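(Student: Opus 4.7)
The plan is to apply the fundamental theorem of calculus to both $\bu_h$ and $\tbu_h$, subtract the two integral representations, and bound the residual by the triangle inequality. Concretely, I would first rewrite the FOM solution as
\begin{equation*}
\bu_h(t;\bmu) \;=\; \bu_{0,h}(\bmu) + \int_{t_0}^{t} \bff_h(\tau,\bu_h(\tau;\bmu);\bmu)\,d\tau.
\end{equation*}
Next, starting from $\tbu_h(t;\bmu) = \Psi'(\bu_n(t;\bmu))$, I would apply the chain rule to $\tau \mapsto \Psi'(\bu_n(\tau;\bmu))$: since $\bu_n \in C^1$ and $\Psi'$ is Lipschitz (hence a.e.\ differentiable by Rademacher's theorem), the composition is absolutely continuous and $\frac{d}{d\tau}\Psi'(\bu_n(\tau;\bmu)) = \mathbf{J}_{\Psi',\bu_n}(\bu_n(\tau;\bmu))\,\bff_n(\tau,\bu_n(\tau;\bmu);\bmu)$ holds a.e., giving
\begin{equation*}
\tbu_h(t;\bmu) \;=\; \Psi'(\Psi(\bu_{0,h}(\bmu))) + \int_{t_0}^{t} \mathbf{J}_{\Psi',\bu_n}(\bu_n(\tau;\bmu))\,\bff_n(\tau,\bu_n(\tau;\bmu);\bmu)\,d\tau.
\end{equation*}

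Subtracting the two representations isolates the two contributions in the target bound: a time-independent term $\bu_{0,h}(\bmu) - \Psi'\circ\Psi(\bu_{0,h}(\bmu))$ that encodes the autoencoder's reconstruction error on the initial datum, and a time-accumulated integrand comparing the high-dimensional vector field $\bff_h(\bu_h)$ with the pushforward $\mathbf{J}_{\Psi',\bu_n}\bff_n(\bu_n)$ of the latent vector field through the decoder's differential. Applying the triangle inequality, moving the norm inside the integral, and bounding $t-t_0 \le T$ yields
\begin{equation*}
\|\bu_h(t;\bmu) - \tbu_h(t;\bmu)\| \le \|\bu_{0,h}(\bmu) - \Psi'\circ\Psi(\bu_{0,h}(\bmu))\| + T\sup_{\tau\in[t_0,T]}\!\|\bff_h(\tau,\bu_h(\tau;\bmu);\bmu) - \mathbf{J}_{\Psi',\bu_n}(\bu_n(\tau;\bmu))\,\bff_n(\tau,\bu_n(\tau;\bmu);\bmu)\|.
\end{equation*}
Finally, taking the supremum over $\bmu\in\mathcal{P}$ on the first term and over $[t_0,T]\times\mathcal{P}$ on the second recovers precisely the $W^{0,\infty}$ norms appearing on the right-hand side.

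The proof is essentially mechanical once the chain-rule step is justified, so I do not foresee a real obstacle, only one point that should be stated carefully. The subtle ingredient is the differentiation of $\Psi'\circ\bu_n$: because $\Psi'$ is merely Lipschitz (as assumed in Definition \ref{def:ldp}), the identity $\frac{d}{d\tau}\Psi'(\bu_n) = \mathbf{J}_{\Psi',\bu_n}\,\dot{\bu}_n$ is valid a.e.\ rather than pointwise, which nevertheless suffices because it is used under the Lebesgue integral. If one prefers pointwise validity, it is enough to strengthen the assumption on $\Psi'$ to $C^1$-regularity, which is natural in the DNN realization discussed later in the paper; either way the bound in the statement follows without further complication.
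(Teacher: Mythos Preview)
Your proposal is correct and follows essentially the same route as the paper's proof: write both $\bu_h$ and $\tbu_h$ in integral form (the latter via the chain rule applied to $\Psi'\circ\bu_n$), subtract, apply the triangle inequality, and pass to the $W^{0,\infty}$ norms. Your additional care in justifying the chain rule under mere Lipschitz regularity of $\Psi'$ (via Rademacher and absolute continuity) is a point the paper glosses over, and your suggested fallback to $C^1$ regularity aligns with the perfect embedding assumption used later.
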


We refer the reader to Appendix \ref{sec:additional_proofs} for the formal proof. We remark that Proposition \ref{error_decomposition} reveals the deep connection between the maps $\Psi,\Psi'$ involved in the dimensionality reduction of the full order state $\bu_h(t;\bmu)$ and the latent dynamics $\bff_n(t,\bu_n(t; \boldsymbol{\mu}); \bmu)$. In particular, the first term in the upper bound is related to the capability of the encoder and the decoder to reconstruct the initial condition. On the other hand, the second term depends on $\Psi'$ and $\bff_n$, and describes how accurate is the latent state dynamics approximation of the exact high-fidelity dynamics. We emphasize that as the latent dimension $n$ increases, we expect that both terms converge quickly to $0$. To characterize such convergence, in the wake of other works concerning the approximation capabilities of AE-based architectures \cite{franco2023deep,brivio2023error}, we introduce the perfect embedding assumption, specifically crafted for latent dynamics modeling. In particular, we argue that the true FOM dynamics can be exactly represented through a sufficiently rich LDM, according to the following definition.

\begin{definition}(Perfect embedding assumption)
\label{perfect_embedding} There exists
$n^* \ll N_h$ such that, for any $n \ge n^*$, there exist 
\begin{itemize}
    \item a decoder function $\Psi' \in C^1(\mathbb{R}^n;\mathbb{R}^{N_h})$,
    \item a latent dynamics function $\mathbf{f}_n(t, \mathbf{u}_n(t; \boldsymbol{\mu}); \bmu)$, which is Lipschitz continuous with respect to $\bu_n$, uniformly in $t$, and Lipschitz continuous with respect to $t$,
\end{itemize}
such that
\begin{equation}
{\mathbf{u}}_h(t; \boldsymbol{\mu}) = \Psi'(\mathbf{u}_n(t; \boldsymbol{\mu})) \quad \textnormal{and} \quad \dot{\bu}_n(t; \boldsymbol{\mu}) = \bff_n(t,\bu_n(t; \boldsymbol{\mu})) \quad \forall t \in (t_0, T].
\end{equation}
Moreover, for any $n \ge n^*$ there exists $\Psi \in C^1(\mathbb{R}^{N_h};\mathbb{R}^{n})$ such that $\mathbf{u}_n(t_0; \boldsymbol{\mu}) = \Psi(\mathbf{u}_h(t_0; \boldsymbol{\mu}))$. The previous assumptions hold for any $\boldsymbol{\mu} \in \mathcal{P}$.
\end{definition}

The purpose of the perfect embedding assumption is both to provide a suitable heuristic criterion to determine the latent dimension $n$ and the regularity of $\Psi'$, $\Psi$ and $\bff_n$. We refer the interested reader to \cite{franco2023deep} for more details on the characterization of the latent dimension in nonlinear dimensionality reduction. Moreover, we highlight a deep connection between the error decomposition formula of Proposition \ref{error_decomposition} and the Assumption \ref{perfect_embedding}; indeed, it is straightforward to see that if the perfect embedding assumption is attained, then  both terms in the upper bound of Proposition \ref{error_decomposition} vanish.
Finally, we remark that Assumption \ref{perfect_embedding} will play a crucial role in the subsequent sections of this work, especially for the characterization of the time-continuous approximation property. Before delving into that matter, we first focus on the properties and numerical analysis of LDMs.

\subsubsection{Stability} 
The dynamical nature of the proposed framework, preserving the ODE-IVP structure of the FOM, enhances the model's interpretability from a mathematical perspective, thereby enabling us to address specific aspects such as \textit{stability}, and later, its numerical properties in a time-discrete setting.
Specifically, LDMs' integral formulation \eqref{eq:ldm} defines a parameterized problem of the form
\begin{equation}
    \begin{cases}
        \tbu_h(t;\bmu) = \Psi'(\bu_n(t;\bmu)), & t\in (t_0,T],\\
        \dot{\bu}_n(t;\bmu) = \bff_n(t,\bu_n(t;\bmu);\bmu), & t\in (t_0,T],\\
        \bu_n(t_0;\bmu) = \Psi(\bu_{0,h}(\bmu)),
    \end{cases}
    \label{eq:ldm_ivp}
\end{equation}
where the latter two equations define a \textit{latent} IVP in $\mathcal{S}_n\subset\mathbb{R}^n$, while the former defines the mapping of the latent state $\bu_n(t;\bmu)$ to the high-dimensional space $\mathcal{S}_h\subset\mathbb{R}^{N_h}$, producing an approximation $\tbu_h(t;\bmu)$ of the FOM state. In order to analyze the stability of \eqref{eq:ldm_ivp} in the sense of Lyapunov \cite{arnold1978ordinary, Quarteroni2007},  we consider the following perturbed problem 
\begin{equation}
    \begin{cases}
        \tbz_h(t;\bmu) = \Psi'(\bz_n(t;\bmu)), & t\in (t_0,T],\\
        \dot{\bz}_n(t;\bmu) = \bff_n(t,\bz_n(t;\bmu);\bmu) + \bdelta(t), & t\in (t_0,T],\\
        \bz_n(t_0;\bmu) = \Psi(\bu_{0,h}+\bdelta_0),
    \end{cases}
    \label{eq:ldm_ivp_pert}
\end{equation}
denoting by $(\bdelta_0,\bdelta(t)) \in \mathbb{R}^{N_h} \times \mathbb{R}^{n}$ two perturbations such that $\|\bdelta_0\|< \varepsilon,\  \|\bdelta(t)\| < \varepsilon, \ \forall t \in (t_0,T]$. In particular, by setting $\varepsilon > 0$ such that the latent IVP is well-defined, we aim to show the existence of a constant $C>0$, independent of $\varepsilon$, such that the following bound on the high-dimensional solutions holds: 
\begin{equation*}
    \|\tbz_h(t;\bmu) - \tbu_h(t;\bmu)\| < C\varepsilon, \qquad \forall t \in (t_0, T].
\end{equation*}
The goal is to assess the sensitivity of $\tbu_h(t;\bmu)$ with respect to the introduced perturbations, and to unveil the role of the maps $\Psi$ and $\Psi'$ in guaranteeing stability, together with the appropriate requirements. We proceed by fixing $\bmu \in \mathcal{P}$, and by considering
\begin{equation}
    \|\tbz_h(t) - \tbu_h(t)\| = \|\Psi'(\bz_n(t)) - \Psi'(\bu_n(t))\| \leq Lip(\Psi')\|\bz_n(t) - \bu_n(t)\| =  Lip(\Psi')\|\bxi(t)\|, \label{eq:stability_w}
\end{equation}
having set $\bxi(t) = \bz_n(t) - \bu_n(t)$ and denoting by $Lip(\cdot)$ the Lipschitz constant of a given map. By differentiating, so that $\dot{\bxi}(t) = \bff_n(t,\bz_n(t)) - \bff_n(t,\bu_n(t)) + \bdelta(t)$, and then integrating on $(t_0,t)$, with $t\in(t_0,T]$, it follows that
\begin{equation*}
    \bxi(t) = \bz_n(t_0) - \bu_n(t_0) + \int_{t_0}^{t} \Big(\bff_n(s, \bz_n(s)) - \bff_n(s, \bu_n(s))\Big)ds + \int_{t_0}^{t}\bdelta(s)ds.
\end{equation*}
Then, 
\begin{align}
    \|\bxi(t)\| &\leq \|\bz_n(t_0) - \bu_n(t_0) \| + Lip(\bff_n)\int_{t_0}^{t}\|\bxi(s)\|ds + \int_{t_0}^{t}\|\bdelta(s)\|ds\notag\\
    &= \|\Psi(\bu_h(t_0) + \bdelta_0) - \Psi(\bu_h(t_0)) \| + Lip(\bff_n)\int_{t_0}^{t}\|\bxi(s)\|ds + \int_{t_0}^{t}\|\bdelta(s)\|ds\notag\\
    &\leq Lip(\Psi)\|\bdelta_0\| + Lip(\bff_n)\int_{t_0}^{t}\|\bxi(s)\|ds + \int_{t_0}^{t}\|\bdelta(s)\|ds\notag\\
    &\leq (Lip(\Psi) + |t-t_0|)\varepsilon + Lip(\bff_n)\int_{t_0}^t\|\bxi(s)\|ds \leq (Lip(\Psi) + |t-t_0|)\varepsilon e^{Lip(\bff_n)|t-t_0|} \label{eq:stability_gronwall}
\end{align} 
where \eqref{eq:stability_gronwall} follows from Gronwall's Lemma. Finally, considering  \eqref{eq:stability_w}, we obtain
\begin{align*}
    \|\tbz_h(t) - \tbu_h(t)\| &\leq Lip(\Psi') (Lip(\Psi) + |t-t_0|)\varepsilon e^{Lip(\bff_n)|t-t_0|} \qquad \forall t\in(t_0,T]\\
    & \leq Lip(\Psi') (Lip(\Psi) + |T-t_0|)\varepsilon e^{Lip(\bff_n)|T-t_0|}
\end{align*}
Thus, setting 
\[
C = C(\Psi,\Psi',\bff_n, |T-t_0|) = Lip(\Psi') (Lip(\Psi) + |T-t_0|) e^{Lip(\bff_n)|T-t_0|},
\]
we can write 
\begin{equation*}
    \|\tbz_h(t) - \tbu_h(t)\| \leq C\varepsilon,
\end{equation*}
with $C$ independent of the perturbation magnitude $\varepsilon$. The above analysis highlights the role of the projection maps $\Psi, \Psi'$  in ensuring the stability of \eqref{eq:ldm_ivp} by means of their Lipschitz continuity.

\subsection{Discrete latent dynamics models}
\label{section:DLDM}

The actual construction of LDMs deeply relies on numerical integration schemes for the solution of the latent IVP \eqref{eq:ldp}. Thus, we aim to describe the notion of LDM in a time-discrete setting, by considering one-step schemes for numerically approximating the time-evolution of the latent dynamics. Opting for one-step methods may appear limiting, however this choice is supported by the possibility to easily extend the framework to DL scenarios. Indeed, in such cases, explicit RK schemes are predominantly employed due to their effectiveness in balancing training efficiency and inference performance. In particular, we assume the existence of a latent state $\bu_n(t;\bmu)$ for the high-dimensional state $\bu_h(t;\bmu)$, obtained by means of a LDM $(\Psi, \Psi', \bff_n)$ solving the associated LDP. 

Thus, by introducing a discretization of the time domain $[t_0,T]$ into $N_\Dt$ intervals of width $\Dt$, with $t_k = t_0 + k\Dt$ for $k=0,\ldots,N_\Dt$, let $\bu_n^k(\bmu)$ be the numerical approximation at time step $t_k$ of the time-continuous latent state $\bu_n(t_k;\bmu)$. Adopting Henrici's notation \cite{henrici1962discrete}, a single step of an explicit RK scheme for the numerical approximation of the latent dynamics, reads as
\begin{equation}
    \bu_n^{k}(\bmu) = \bu_n^{k-1}(\bmu) + \Dt \Phi(t_{k-1}, \bu_n^{k-1}(\bmu); \Dt, \bff_n, \bmu), \qquad 
     k = 1,\ldots,N_\Dt,
    \label{eq:latent_discretization}
\end{equation}
denoting by $\Phi$ the method's increment function. Thus, by inserting \eqref{eq:latent_discretization} into the LDM framework, we can define the discrete LDM formulation ($\dldm$), reading as 
\begin{equation}
    \begin{cases}
    \tbu_h^{k}(\bmu) = \Psi'(\bu_n^{k}(\bmu)), & k = 1,\ldots,N_\Dt,\\
    \bu_n^{k}(\bmu) = \bu_n^{k - 1}(\bmu) + \Dt \Phi(t_{k - 1}, \bu_n^{k - 1}(\bmu);\Dt,\bff_n,\bmu), & k = 1,\ldots,N_\Dt,\\
    \bu_n^0(\bmu) = \Psi(\bu_{0,h}(\bmu)).\\
    \end{cases}
    \label{eq:DLDM}
\end{equation}
 As highlighted in \eqref{eq:DLDM}$_1$, numerically solving the latent IVP \eqref{eq:ldp} leads to a numerical approximation $\tbu_h^k(\bmu)$ of the time-continuous LDM high-dimensional reconstructed high-fidelity state $\tbu_h(t;\bmu)$, thus leading to a further approximation of the high-dimensional FOM state $\bu_h(t;\bmu)$ by means of $\tbu_h^k(\bmu)$. Thus, the error between the discrete approximation provided by $\dldm$ and the FOM solution, at $t_k$, can be decomposed as follows 
\begin{equation}
    \|\bu_h(t_k;\bmu) - \tbu_h^k(\bmu)\| \leq  \|\bu_h(t_k;\bmu) - \tbu_h(t_k;\bmu)\| + \|\tbu_h(t_k;\bmu) - \tbu_h^k(\bmu)\|,
    \label{eq:dldm_error_dec}
\end{equation}
taking into account (i) the first stage of approximation of the full-order state by means of the LDM in time-continuous settings, and (ii) the second stage of numerical approximation, accounting for the numerical error source due to the $\dldm$ approximating the LDM. 
In this view, it is of critical importance to characterize the concepts of \textit{convergence, consistency} and \textit{zero-stability} for the introduced discrete formulation \eqref{eq:DLDM}, in order to better understand how the properties of the numerical scheme are transferred to the $\dldm$ approximation, and to further extend the error decomposition formula \eqref{eq:error_decomposition} to the discrete case, by means of \eqref{eq:dldm_error_dec}. In the following the parameter dependence is temporarily dropped, assuming again to fix $\bmu\in \mathcal{P}$.

\paragraph{Convergence.} Given the $\dldm$ formulation, we aim to characterize its convergence, defined as 
\begin{equation*}
    \lim_{\Dt \rightarrow 0}\|\tbu_h(t_k) - \tbu^k_h\| = 0, \quad \forall k=0,\ldots,N_\Dt.
\end{equation*}
The convergence of the $\dldm$ discrete evolution to the LDM approximation is naturally related, by construction, to the convergence of the time integration scheme employed in the latent space. In particular, assuming to employ a method of order $p$, the following convergence result holds for the discretized latent problem:
\begin{equation*}
    \|\bu_n(t_k) - \bu_n^k\| = O(\Dt^p).
\end{equation*}
Thus, a convergence result for the overall framework can be built on top of the one related to the latent time-stepping scheme, by involving the reconstruction map $\Psi'$.
Indeed, by means of $\Psi'$ Lipschitz continuity, the order of convergence is preserved under the mapping from $\mathcal{S}_n\rightarrow\mathcal{S}_h$ for the high-dimensional reconstruction $\tbu_h^k$, reading as
\begin{align*}
    \|\tbu_h(t_k) - \tbu^k_h\| = \|\Psi'(\bu_n(t_k)) - \Psi'(\bu_n^k)\| \leq Lip(\Psi')\|\bu_n(t_k) - \bu_n^k\|\leq Lip(\Psi') C \Dt^p.
\end{align*}

\paragraph{Consistency.} Here, we aim to characterize how the consistency of the numerical method for the solution of the latent IVP, employed in the $\Delta$LDM formulation, is transferred to the high dimensional state, investigating the relationship between the high and low-dimensional local truncation errors (LTEs). 
In order to define the consistency property for the method employed for discretizing the latent ODE, we consider the residual $\bveps_n^{k}\in\mathbb{R}^n$ at time $t_k$, defined as
\begin{equation*}
    \bveps_n^{k} = \bu_n(t_{k}) - \bu_n(t_{k-1}) - \Dt \Phi(t_{k-1}, \bu_n(t_{k-1}); \bff_n, \Dt).
\end{equation*}
Moreover, we define the residual $\bveps_h^{k}\in\mathbb{R}^{N_h}$ that generates after one-step of the $\Delta$LDM scheme, at $t_k$, with respect to the time-continuous FOM solution, by considering of having as initial condition $\bu_h(t_k)$ 
\begin{align*}
    \bveps_h^{k} &= \underbrace{\tbu_h(t_{k})}_{\text{LDM}} - \underbrace{\Psi'(\Psi(\bu_h(t_{k-1})) + \Dt \Phi(t_{k-1}, \Psi(\bu_h(t_{k-1}));\bff_n, \Dt))}_{\text{1-step} \  \Delta\text{LDM}}.
\end{align*}
At this point, we relate the high- and low-dimensional residuals, in order to draw the connection between consistency of the LDM scheme and the one of the method employed in the latent space, as follows:
\begin{align*}
    \|\bveps_h^{k}\| &= \|\Psi'(\bu_n(t_{k})) - \Psi'(\Psi(\bu_h(t_{k-1})) + \Dt \Phi(t_{k-1}, \Psi(\bu_h(t_{k-1}));\bff_n, \Dt))\|\\
    &\leq Lip(\Psi')\|\bu_n(t_{k}) - \Psi(\bu_h(t_{k-1})) - \Dt \Phi(t_{k-1}, \Psi(\bu_h(t_{k-1}));\bff_n, \Dt)\|\\
    & = Lip(\Psi')\|\bu_n(t_{k}) - \bu_n(t_{k-1}) - \Dt \Phi(t_{k-1}, \bu_n(t_{k-1});\bff_n, \Dt)\|\\
    &= Lip(\Psi')\|\bveps_n^{k}\|.
\end{align*}
Thus, letting $\btau_*^{k}(\Dt) = \bveps_*^{k}/\Dt$ be the LTEs, with $*$ either $h$ or $n$, the following holds
\begin{equation*}
    \|\btau_h^{k}(\Dt)\| \leq Lip(\Psi')\|\btau_n^{k}(\Dt)\|, \qquad 1\leq k \leq N_\Dt,
\end{equation*} which, in terms of the global truncation errors, reads as
\begin{equation*}
    \tau_h(\Dt) \leq Lip(\Psi')\tau_n(\Dt).
\end{equation*} Thus, assuming consistency of the latent scheme, i.e. $\tau_n(\Dt)\rightarrow 0$, as $\Dt\rightarrow 0$, the consistency of the discretized LDM scheme is guaranteed.

\paragraph{Zero-stability.} Assuming to employ a \textit{zero-stable} explicit one-step numerical method for the approximation of the latent IVP, we aim to characterize how such property, belonging to the latent state approximation $\bu_n^k$, affects the reconstructed high-fidelity approximation $\tbu_h^k$. Thus, we proceed by considering the perturbed $\dldm$ formulation, defined as
\begin{equation}
    \begin{cases}
        \tbz_h^k = \Psi'(\bz_n^k), & k = 1,\ldots,N_\Dt,\\
        \bz_n^{k} = \bz_n^{k-1}+\Dt\Big(\Phi(t_{k-1},\bz_n^{k-1};\bff_n, \Dt) + \bdelta_n^{k}\Big), & k = 1,\ldots,N_\Dt,\\
        \bz_n^0 = \Psi(\bu_{0,h} + \bdelta_h^0),\\
    \end{cases}
\end{equation}
with $(\bdelta_h^0,\bdelta_n^k)\in\mathbb{R}^{N_h}\times \mathbb{R}^{n}$ suitable perturbations. In particular, we aim to show that $\exists \Dt^*>0, \ \exists \tilde{C}>0$ such that $\forall \Dt \leq \Dt^*$, $\forall \varepsilon>0$ small enough, if $\|\bdelta_h^0\|\leq \varepsilon, \ \|\bdelta_n^k\|\leq \varepsilon, \ 1\leq k\leq N_\Dt$, then 
\begin{equation}
    \|\tbz^k_h - \tbu^k_h\| \leq \tilde{C} \varepsilon, \qquad 1\leq k\leq N_\Dt.
    \label{eq:zero-stab-h}
\end{equation}
Assuming to employ a zero-stable method for the latent IVP problem approximation, a result of the form \eqref{eq:zero-stab-h} holds for the discretized latent state, i.e.
\begin{equation*}
    \|\bz^k_n - \bu^k_n\| \leq C \varepsilon, \qquad 1\leq k\leq N_\Dt,
\end{equation*}
thus, exploiting the Lipschitz continuity of $\Psi'$, we obtain
\begin{equation*}
    \|\tbz_h^k-\tbu_h^k\| = \|\Psi'(\bz_n^k)-\Psi'(\bu_n^k)\| \leq  Lip(\Psi') \|\bz_n^k-\bu_n^k\| \leq Lip(\Psi')C\varepsilon.
\end{equation*}
Then, we derive $C$, assessing its independence from $\varepsilon$, under the assumption of Lipschitz continuity of the increment function $\Phi$ with respect to the second variable, of constant\footnote{Independent of the discretization step $\Dt$ and of the nodes $t_k$.} $Lip(\Phi)$, being a sufficient condition guaranteeing zero-stability of the employed method \cite{Quarteroni2007}. 
Indeed, letting $\bxi^k_n = \bz_n^k-\bu_n^k$, we have that
\begin{equation*}
    \bxi^{k}_n = \bxi^{k-1}_n + \Dt \Big( \Phi(t_{k-1},\bz_n^{k-1};\bff_n,\Dt) - \Phi(t_{k-1},\bu_n^{k-1};\bff_n,\Dt) + \bdelta_n^k\Big),
\end{equation*}
then, summing over $k$, with $m=1,\ldots,N_\Dt$, we obtain
\begin{equation*}
    \bxi^{m}_n = \bxi^0_n + \Dt \sum_{k=1}^{m} \bdelta_n^{k} + \Dt \sum_{k=1}^{m}\Big(\Phi(t_{k-1},\bz_n^{k-1};\bff_n,\Dt) - \Phi(t_{k-1},\bu_n^{k-1};\bff_n,\Dt)\Big).
\end{equation*}
Thus, thanks to the Lipschitz continuity of  $\Phi$ and $\Psi$, it follows that
\begin{align}
    \|\bxi_n^m\| &\leq \|\bxi_n^0\| + \Dt \sum_{k=1}^{m} \|\bdelta_n^{k}\| + \Dt Lip(\Phi) \sum_{k=1}^{m}\|\bxi_n^{k-1}\|, &1\leq m \leq N_\Dt, \nonumber \\ &\leq Lip(\Psi) \|\bdelta_h^0\| + \Dt \sum_{k=1}^{m} \|\bdelta_n^{k}\| + \Dt Lip(\Phi) \sum_{k=1}^{m}\|\bxi_n^{k-1}\|, &1\leq m \leq N_\Dt, \nonumber \\
    &\leq (Lip(\Psi)+m\Dt)\varepsilon e^{m\Dt Lip(\Phi)}, &1\leq m \leq N_\Dt, \label{eq:discrete-gronwall} \smallskip \\
    &\leq \underbrace{(Lip(\Psi)+ |T-t_0|) e^{Lip(\Phi)|T-t_0|}}_{C} \varepsilon \nonumber
\end{align}
with \eqref{eq:discrete-gronwall} following from the discrete Gronwall lemma. Thus, setting 
\[
\tilde{C} = Lip(\Psi')(Lip(\Psi)+ |T-t_0|) e^{Lip(\Phi)|T-t_0|}
\]
in \eqref{eq:zero-stab-h}, we can conclude that the zero-stability property holds for the $\dldm$ scheme.

\paragraph{Error decomposition.}
Since the practical implementation of LDMs is obtained by means of its discrete formulation $\dldm$, it is necessary to extend the previously derived error bounds by including the error source related to the numerical approximation employed within the latent space. In particular, by referring to \eqref{error_decomposition}, it holds that 
\begin{align*}
    \|\bu_h(t_k;\bmu) - \tbu_h^k(\bmu)\| &= \|\bu_h(t_k;\bmu) - \tbu_h(t_k;\bmu) + \tbu_h(t_k;\bmu)  - \tbu_h^k(\bmu)\| \\
    &\leq \|\bu_h(t_k;\bmu) - \tbu_h(t_k;\bmu)\| + \|\tbu_h(t_k;\bmu) -  \tbu_h^k(\bmu)\|\\
    &\leq C_1 + Lip(\Psi')C\Dt^p.
\end{align*}
Thus, the final form of the error decomposition, accounting for { \em(i)} the continuous-time approximation, namely, the upper bound terms of \eqref{error_decomposition} (collected in $C_1$), and { \em(ii)} the numerical error source ($C_2 \Dt^p$), takes the form
\begin{equation}
    \|\bu_h(t_k,\bmu)-\tbu_h^k(\bmu)\| \leq C_1 + C_2 \Dt^p,
    \label{eq:error_decomposition}
\end{equation}
as depicted in Figure \ref{fig:error-bounds}.
\begin{figure}[h]
    \center
    \includegraphics[width=0.52\textwidth]{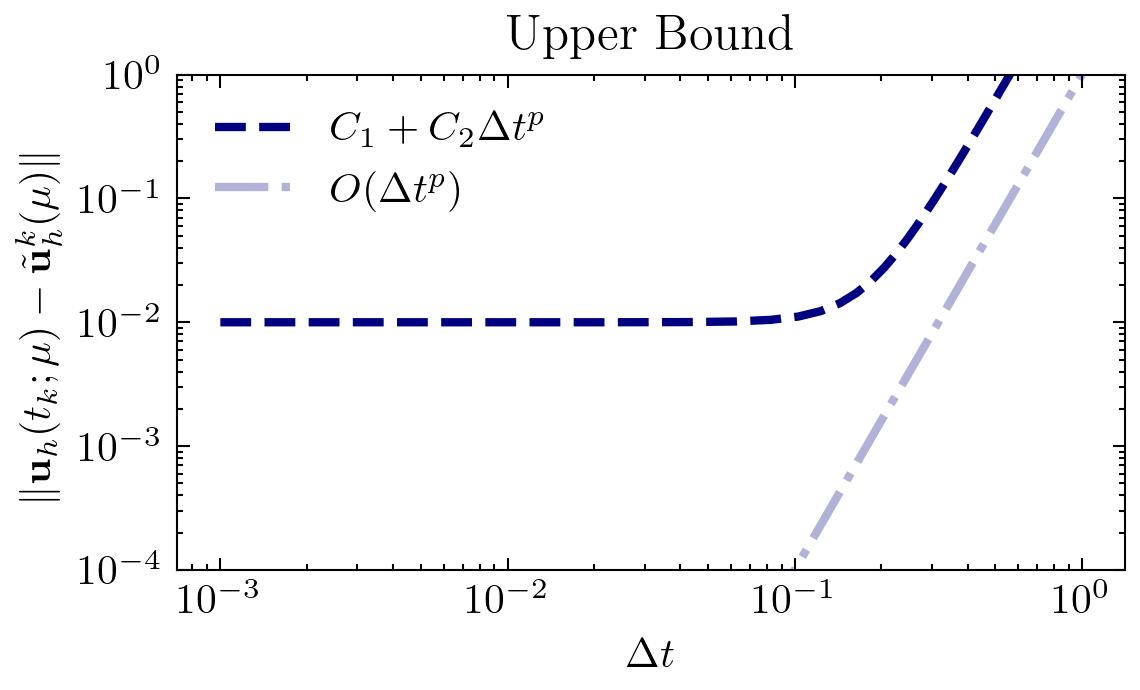}
    \caption{\textit{Error decomposition.} Illustration of the upper bound referring to the error decomposition formula \eqref{eq:error_decomposition} in the discrete case, for the full-order state approximation $\tbu_h^k$ provided by the $\dldm$ scheme.}
    \label{fig:error-bounds}
\end{figure}

\subsection{Learnable latent dynamics models}
\label{section:LLDM}

The LDP formulation described so far provides a well-structured framework that can be readily cast within a learning context, where maps equipped with learnable parameters can approximate the components involved in the LDM definition. In particular, we consider the problem of learning a LDM in a time-discrete setting, i.e. a $\dldm$. 
We proceed by considering {\em (i)} a discretization of the time interval $[t_0,T]$, with a discretization step $\Dt$, resulting in a sequence of $N_\Dt+1$ time instances $\{t_k\}_{k=0}^{N_\Dt}$, with $t_k = t_0+k\Dt$, and {\em (ii)} a proper sampling criterion over the compact set of parameters $\mathcal{P}\subset \mathbb{R}^{n_\mu}$, leading to a discrete set of $N_\mu$ sampled parameters instances $\{\bmu^j\}_{j=1}^{N_\mu}$. Then, a collection of high-fidelity FOM snapshots $\bu_h(t_k;\bmu^j)$, sampled in correspondence of $\{t_k\}_{k=0}^{N_\Dt}\times\{\bmu^j\}_{j=1}^{N_\mu}$, is considered. 
So, the \textit{learnable} $\dldm_\theta$ approximating the evolution of a high-fidelity trajectory $\bu_h(t;\bmu)$ by means of $\hbu_h^k(\bmu)$, for a given parameter instance $\bmu \in \mathcal{P}$, reads as
\begin{equation}
    \begin{cases}
    \hbu_h^{k}(\bmu) = \Psi'_\theta(\hbu_n^{k}(\bmu)), & k = 1,\ldots,N_\Dt,\\
    \hbu_n^{k}(\bmu) = \hbu_n^{k-1}(\bmu) + \Dt \Phi(t_{k-1}, \hbu_n^{k-1}(\bmu);\Dt,\bff_{n,\theta},\bmu), & k = 1,\ldots,N_\Dt,\\    \hbu_n^0(\bmu) = \Psi_\theta(\bu_{0,h}(\bmu)),\\
    \end{cases}
    \label{eq:learnable_DLDM}
\end{equation}
with $\btheta = (\btheta_\Psi,\btheta_{\Psi'},\btheta_{\bff_n}) \in\Theta$ denoting the vector of learnable parameters, in a suitable space $\Theta$, associated to the triple of learnable functions $(\Psi_\theta,\Psi'_\theta,\bff_{n,\theta})$, satisfying the following minimization problem
\begin{equation}
 \min_{\theta\in \Theta}\mathcal{L}(\theta), \qquad \mathcal{L}(\theta) =
    \frac{1}{N_\mu N_\Dt}\sum_{j=1}^{N_\mu} \sum_{k=1}^{N_\Dt} \| \bu_h(t_k;\bmu^j) - \hbu^k_h(\bmu^j) \|^2.
   \label{eq:dldp}
\end{equation}
 Specifically, in Algorithm \ref{alg:pseudo-training} we report a simplified version of the training procedure, highlighting the main steps to be carried out during one iteration of the $\dldm_\theta$ training phase, for approximating a single FOM trajectory $\bu_h(t_k;\bmu)$, sampled at temporal nodes $t_k$, for a fixed parameter instance $\bmu \in \mathcal{P}$. Hereafter, we explore the problem of learning a time-continuous approximation, by analyzing the approximations involved in the introduced learnable framework.

\begin{algorithm}[h]
\caption{$\dldm_\theta$ training algorithm (single trajectory)}
\begin{algorithmic}[1]
\Require FOM trajectory $\{\bu_h(t_k;\bmu)\}_{k=0}^{N_\Dt}$, timesteps $\{t_k\}_{k=0}^{N_\Dt}$, parameter instance $\bmu\in\mathcal{P}$
\State Project initial state $\hbu^0_n(\bmu) \leftarrow \Psi_\theta(\bu_h^0(\bmu)) \in \mathbb{R}^n$ \;
\For{$k = 1 : N_\Dt$}
    \State Latent Runge-Kutta step $\hbu_{n}^{k} \leftarrow \hbu_{n}^{k-1} + \Dt \Phi(t_{k-1}, \hbu_{n}^{k-1}(\bmu); \Dt, \bff_{n,\theta}, \bmu)$\;
    \State Reconstruct evolved state $\hbu_{h}^{k}(\bmu) \leftarrow \Psi'_\theta(\hbu_{n}^{k}(\bmu)) \in \mathbb{R}^{N_h}$\;
    \EndFor
\State Take optimization step on $\mathcal{L}(\theta)$ in \eqref{eq:dldp}.
\end{algorithmic}
\label{alg:pseudo-training}
\end{algorithm}

\subsubsection{Approximation results}
On the basis of the $\dldm_\theta$ framework just introduced and the perfect embedding assumption \ref{perfect_embedding}, we aim at providing one of the main results of this work, which is contained in the Theorem \ref{error_estimate}, and is endowed with a constructive proof founded on the approximation results of \cite{guhring2021appx}. In particular, we aim at proving that, for each small enough $\Dt$, the approximation error between the original FOM solution and the ROM approximation, provided by the $\dldm_\theta$, is bounded. Additionally, we show that the validity of this error bound directly ensures the satisfaction of the time-continuous approximation property by the $\dldm_\theta$ formulation.

\begin{theorem}
\label{error_estimate}
Let $\mathcal{T}_{\Dt}$ be a general discretization of $[t_{0}, T]$ with a discretization step $\Dt$, resulting in a sequence of $N_{\Dt}$ time instances $\mathcal{T}_{\Dt} = \{t_k\}_{k=1}^{N_{\Dt}}$. Then, under the perfect embedding assumption, for any tolerance $\varepsilon > 0$, there exist $n > 0$, 
\begin{itemize}
    \item a \textit{dynamics network} $(t,\bmu) \mapsto \mathbf{F}_{n, \theta}(t,\bmu)$, having at most $O(n\varepsilon^{-(n_\mu+1)})$ active weights,
    \item a decoder $\Psi'_{n,\theta}: \mathbb{R}^{n} \rightarrow \mathbb{R}^{N_h}$, having at most $O(N_h\varepsilon^{-n})$ active weights,
    \item an encoder $\Psi_{n,\theta}: \mathbb{R}^{N_h} \rightarrow \mathbb{R}^{n}$, having at most $O(n\varepsilon^{-N_h})$ active weights,
\end{itemize}
and  $\Dt^* = \Dt^*(\mathbf{F}_{n, \theta})$,
such that
\begin{equation}
    \label{eq:error_estimate}
    \sup_{k \in \{1, \ldots, N_{\Dt}\}} \| \mathbf{u}_h(t_k; \boldsymbol{\mu}) - \hat{\mathbf{u}}_h^k(\boldsymbol{\mu}) \| \le \varepsilon, \quad \forall \Delta
    t \le \Dt^*, \quad \forall \boldsymbol{\mu} \in \mathcal{P},
\end{equation}
where 
\begin{equation}
\label{eq:ldm_theta_explicit}
    \hat{\bu}_{h}^{k}(\bmu) := \Psi'_{n,\theta}(\bu_{n,\theta}^k(\bmu)) = \Psi'_{n,\theta}\bigg(\Psi_{n,\theta}(\bu_{0,h}(\bmu)) + \Dt \sum_{j=0}^{k} w_j \mathbf{F}_{n,\theta}(s_j;\bmu)\bigg), \qquad \forall k=1,\ldots,N_{\Dt},
\end{equation}
with $(w_j,s_j)_{j=1}^{N_{\Dt}}$ pairs of quadrature weights and nodes.
\end{theorem}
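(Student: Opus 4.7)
The plan is to combine the perfect embedding assumption (Definition \ref{perfect_embedding}) with ReLU-network approximation rates for $C^1$ functions in the spirit of Gühring \cite{guhring2021appx}, and then telescope a single triangle inequality that separates four error sources: the encoder error on the initial datum, the uniform approximation error of the latent vector field, the temporal quadrature error, and the decoder error when lifting back to $\mathbb{R}^{N_h}$. The key observation that makes the stated weight counts achievable is that the latent trajectory $\bu_n(t;\bmu)$ is determined by $(t,\bmu)$, so the object the dynamics network has to approximate is not $\bff_n$ itself (which would depend on the $(1+n+n_\mu)$-dimensional tuple $(t,\bu_n,\bmu)$) but the composite
\begin{equation*}
\mathbf{F}^*(t;\bmu) := \bff_n^*(t,\bu_n(t;\bmu);\bmu),
\end{equation*}
living on the $(n_\mu+1)$-dimensional compact set $[t_0,T]\times\mathcal{P}$; this explains the scaling $O(n\varepsilon^{-(n_\mu+1)})$ rather than $O(n\varepsilon^{-(1+n+n_\mu)})$.

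First I would invoke the perfect embedding assumption to fix $n\ge n^*$ and produce the target triple $(\Psi^*,\Psi'^*,\bff_n^*)$ with $\bu_h(t;\bmu)=\Psi'^*(\bu_n(t;\bmu))$ and $\bu_n(t_0;\bmu)=\Psi^*(\bu_h(t_0;\bmu))$. The regularity assumed in Definition \ref{perfect_embedding} transfers to $\mathbf{F}^*$ via the chain rule and is exactly what is needed both for a Gühring rate and for a quadrature bound of order $p$. Next I would apply the network-approximation result separately to $\Psi^*,\Psi'^*,\mathbf{F}^*$, obtaining $\Psi_{n,\theta},\Psi'_{n,\theta},\mathbf{F}_{n,\theta}$ with $L^\infty$ errors $\varepsilon_E,\varepsilon_D,\varepsilon_F$ and weight counts of the orders stated (inputs of dimensions $N_h,n,n_\mu+1$ and outputs of dimensions $n,N_h,n$), taking care that the decoder is built with a Lipschitz constant $L_{\Psi'}$ that is uniform in $\varepsilon_D$. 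Starting from $\bu_h(t_k;\bmu)=\Psi'^*(\bu_n(t_k;\bmu))$ and using this Lipschitz bound gives
\begin{equation*}
\|\bu_h(t_k;\bmu)-\hat{\bu}_h^k(\bmu)\| \le \varepsilon_D + L_{\Psi'}\|\bu_n(t_k;\bmu)-\hat{\bu}_n^k(\bmu)\|,
\end{equation*}
while expanding $\bu_n(t_k;\bmu)=\Psi^*(\bu_{0,h}(\bmu))+\int_{t_0}^{t_k}\mathbf{F}^*(\tau;\bmu)\,d\tau$ and inserting/subtracting $\Delta t\sum_j w_j\mathbf{F}^*(s_j;\bmu)$ in the definition \eqref{eq:ldm_theta_explicit} of $\hat{\bu}_n^k(\bmu)$ splits the inner residual into an encoder piece bounded by $\varepsilon_E$, a pure quadrature piece bounded by $C_q\Delta t^p$, and a dynamics piece bounded by $(T-t_0)\varepsilon_F$. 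Choosing $\varepsilon_E,\varepsilon_D,\varepsilon_F$ each proportional to $\varepsilon$ (with constants absorbing $L_{\Psi'}$ and $T-t_0$) and selecting $\Delta t^*>0$ so that $L_{\Psi'}C_q(\Delta t^*)^p\le\varepsilon/4$ closes the bound \eqref{eq:error_estimate} uniformly in $\bmu$ and $\Delta t\le\Delta t^*$; substituting $\varepsilon_E,\varepsilon_D,\varepsilon_F\sim\varepsilon$ into the Gühring rates reproduces the stated weight counts.

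The hard part will be verifying that the Gühring construction admits a decoder whose Lipschitz constant stays bounded independently of $\varepsilon_D$: otherwise the amplification factor $L_{\Psi'}$ blows up as $\varepsilon\to 0$ and the telescoping collapses. A secondary technical point is pinning down a quadrature rule on $[t_0,t_k]$ whose nodes and weights $(s_j,w_j)$ give an order-$p$ error with constant $C_q$ depending only on derivatives of $\mathbf{F}^*$ that the perfect embedding assumption already controls, so that $\Delta t^*$ can genuinely be taken as a function of $\mathbf{F}_{n,\theta}$ alone, as the statement asserts. Once \eqref{eq:error_estimate} is established, the time-continuous approximation property of Definition \ref{def:timecont} is an immediate corollary: the right-hand side does not degrade under any refinement $\Delta t\le\Delta t^*$, so the training-time accuracy is automatically preserved on finer test grids.
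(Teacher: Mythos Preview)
Your proposal is correct and mirrors the paper's proof: both define the composite $\mathbf{F}^*(t;\bmu)=\bff_n(t,\bu_n(t;\bmu);\bmu)$ to reduce the dynamics-network input to the $(n_\mu+1)$-dimensional domain $[t_0,T]\times\mathcal{P}$, apply the G\"uhring--Raslan rates separately to $\Psi,\Psi',\mathbf{F}^*$, and then telescope the error into encoder, continuous-dynamics, quadrature, and decoder contributions before choosing $\Delta t^*$ from the quadrature convergence. Regarding the Lipschitz issue you flag as the ``hard part,'' the paper does not verify it either but simply remarks that $\bu_n$ and $\bu_{n,\theta}$ ``absorb'' $Lip(\Psi'_{n,\theta})$ so that without loss of generality it may be taken equal to $1$; your caution there is therefore well-placed, and the paper's treatment of that step is no more rigorous than your sketch.
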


\begin{proof}
We let $\varepsilon > 0$. Moreover, without loss of generality, here we assume $t_0 = 0$. The proof is divided into 5 parts.
\begin{itemize}
    \item[(i)] \textit{Definition of the exact latent dynamics} \\
    By employing the definition of LDM and the perfect embedding assumption, we can state that there exists $n^* \ll N_h$ such that, for any $n \ge n^*$ there exists $\Psi'_n, \Psi_n, \bff_n$, such that
    \begin{equation*}
    \begin{cases}
         \bu_h(t;\bmu) = \tilde{\bu}_h(t;\bmu) = \Psi'_n(\bu_n(t;\bmu)) , &t \in (0,T],\\ 
         \dot{\bu}_{n}(t;\bmu) = \bff_n(t, \bu_n(t;\bmu); \bmu), &t \in (0,T], \\
         \bu_n(0;\bmu) = \Psi_n(\bu_{0,h}(\bmu)),&
    \end{cases}
    \end{equation*}
    for any $\bmu \in \mathcal{P}$. Thus, calling $\mathbf{F}_n(t;\bmu) = \bff_n(t, \bu_n(t;\bmu); \bmu)$, for any $t \in (0,T]$, we define explicitly
    \begin{equation*}
        \bu_{n}(t;\bmu) = \Psi_{n}(\bu_{0,h}(\bmu)) + \int_{0}^{t} \mathbf{F}_{n}(s;\bmu) ds.
    \end{equation*} 
    \item[(ii)] \textit{Definition of the neural network architectures} \\
    Since $\mathbf{F}_n(t;\bmu)\in W^{1,\infty}([0,T]\times \mathcal{P};\mathbb{R}^n)$, thanks to G\"uhring-Raslan Theorem \cite{guhring2021appx} there exists a neural network $(t,\bmu) \mapsto \mathbf{F}_{n,\theta}(t;\bmu)$ consisting of at most $O(n\varepsilon^{-(n_\mu+1)})$ active weights such that
    \begin{equation*}
    \|\mathbf{F}_n - \mathbf{F}_{n,\theta}\|_{L^\infty([0,T] \times \mathcal{P}; \mathbb{R}^n)} \le (8T)^{-1}\varepsilon.
    \end{equation*}
    Moreover, we highlight that the manifold $\mathcal{S}_{0} = \{\bu_{0,h}(\bmu) : \bmu \in \mathcal{P}\}$ is such that $diam(\mathcal{S}_{0})$ is bounded since $\mathcal{P}$ is compact, following from the Lipschitzness of $\bu_h$ with respect to $\bmu$, which is satisfied under the requirement $\bu_h \in C^1([0,T]\times\mathcal{P};\mathbb{R}^{N_h})$. Thus, owing to $\Psi_{n} \in C^1(\mathbb{R}^{N_h}; \mathbb{R}^n)$ , it is possible to construct a neural network architecture $\Psi_{n,\theta}: \mathcal{S}_0 \rightarrow \mathbb{R}^n$ having $O(n \varepsilon^{-N_h})$ active weights, such that 
    \begin{equation*}
    \|\Psi_{n}(\bu_{0,h}(\bmu)) - \Psi_{n,\theta}(\bu_{0,h}(\bmu))\|_{L^\infty( \mathcal{P}; \mathbb{R}^n)} \le 8^{-1}\varepsilon.
    \end{equation*}
    Then, it is possible to prove that $\mathcal{S}_n = \{\bu_n(t;\bmu) : t \in [0,T],\ \bmu \in \mathcal{P}\}$ has bounded diameter. Indeed, we emphasize that $[0,T] \times \mathcal{P}$ is compact and by definition of $\bu_n$, since $\Psi_n$  and $\mathbf{F}_n$ are bounded with respect to $t$ and Lipschitz-continuous with respect to $\bmu$, we have, $\forall t_1,t_2 \in [0, T]$ and $\forall \bmu_1, \bmu_2 \in \mathcal{P}\subset\mathbb{R}^{n_\mu}$,
    \begin{equation*}
    \begin{aligned}
        &\|\bu_n(t_1;\bmu_1) - \bu_n(t_2;\bmu_2)\| \le \|\bu_n(t_1;\bmu_1) - \bu_n(t_2;\bmu_1)\| + \|\bu_n(t_2;\bmu_1) - \bu_n(t_2;\bmu_2)\| \\
        &\le \|\mathbf{F}_n(t;\bmu)\|_{L^\infty([0,T] \times \mathcal{P}; \mathbb{R}^n)} |t_1 - t_2| + Lip(\Psi_n) \|\bmu_1 - \bmu_2\| + \int_{0}^{t_2} \|\mathbf{F}_n(s;\bmu_1) - \mathbf{F}_{n}(s;\bmu_2) \|ds\\
        &\le \|\mathbf{F}_n(t;\bmu)\|_{L^\infty([0,T] \times \mathcal{P}; \mathbb{R}^n)} |t_1 - t_2| + Lip(\Psi_n) \|\bmu_1 - \bmu_2\| + T Lip(\mathbf{F}_n) \|\bmu_1 - \bmu_2\|.
    \end{aligned}
    \end{equation*}
    Thus, applying G\"uhring-Raslan Theorem, it is possible to construct a neural network $\Psi'_{n,\theta}:\mathcal{S}_n \rightarrow \mathbb{R}^{N_h}$ having at most $O(N_h\varepsilon^{-n})$ active weights such that
    \begin{equation*}
        \|\Psi'_{n}(\bu_n(t;\bmu)) - \Psi'_{n,\theta}(\bu_n(t;\bmu)) \|_{L^\infty([0,T] \times \mathcal{P}; \mathbb{R}^{N_h})} \le 2^{-1}\varepsilon.
    \end{equation*}
    \item[(iii)] \textit{Approximation of the latent dynamics at a time-continuous level}\\
    Hence, we are now able to define the explicit form of the $\textnormal{LDM}_\theta$ at a continuous level, namely
    \begin{equation*}
        \bu_{n,\theta}(t;\bmu) = \Psi_{n,\theta}(\bu_{0,h}(\bmu)) + \int_{0}^{t} \mathbf{F}_{n,\theta}(s;\bmu) ds.
    \end{equation*}
    We remark that by suitably redefining $\bu_{n,\theta}(t;\bmu)$ and $\bu_{n}(t;\bmu)$, without loss of generality, we can assume that $Lip(\Psi'_{n,\theta}) = 1$.
    Then, it is straightforward to verify that
    \begin{equation*}
    \begin{aligned}
        E_{CONT} :&=  \|\bu_n -  \bu_{n,\theta} \|_{L^\infty([0,T] \times \mathcal{P}; \mathbb{R}^n)} \\
        & = \sup_{t \in [0,T]} \bigg\|\Psi_n(\bu_{0,h}(\bmu)) + \int_{0}^{t}\mathbf{F}_n(s;\bmu)ds -  \Psi_{n,\theta}(\bu_{0,h}(\bmu)) + \int_{0}^{t}\mathbf{F}_{n,\theta}(s;\bmu)ds \bigg\|_{L^\infty(\mathcal{P}; \mathbb{R}^n)} \\
        &\le \|\Psi_n(\bu_{0,h}(\bmu)) - \Psi_{n,\theta}(\bu_{0,h}(\bmu))\|_{L^\infty(\mathcal{P}; \mathbb{R}^n)} +\\
        & \hspace{3cm} + \sup_{t \in [0,T]} \int_{0}^{t}\|\mathbf{F}_n(s;\bmu) - \mathbf{F}_{n,\theta}(s;\bmu)\|_{L^\infty(\mathcal{P}; \mathbb{R}^n)}ds \\
        &\le 8^{-1}\varepsilon + T\|\mathbf{F}_n - \mathbf{F}_{n,\theta}\|_{L^\infty([0,T] \times \mathcal{P}; \mathbb{R}^n)} \\
        & \le 8^{-1}\varepsilon + T(8T)^{-1}\varepsilon = 4^{-1}\varepsilon.
    \end{aligned}
    \end{equation*}
    \item[(iv)] \textit{Approximation of the latent dynamics at a time-discrete level} \\
    Let $\Dt > 0$, then $\mathcal{T}_{\Dt}$ is a uniform discretization of $[0,T]$.
    We set $N_{\Dt} = \textnormal{card}(\mathcal{T}_{\Dt})$ and we define, for any $k=1,\ldots,N_{\Dt}$,
    \begin{equation*}
        \bu_{n,\theta}^k(\bmu) := \Psi_{n,\theta}(\bu_{0,h}(\bmu)) + \Dt \sum_{j=0}^{k} w_j \mathbf{F}_{n,\theta}(s_j;\bmu),
    \end{equation*}
    where $(w_j,s_j)_{j=1}^{N_{\Dt}}$ are the quadrature weights and nodes that implicitly depend on $\Dt$.
    We define the quadrature error as
    \begin{equation*}
         E_{QUAD}(\Dt) := \sup_{k \in \{1, \ldots, N_{\Dt}\}} \bigg\|\int_{0}^{t_k} \mathbf{F}_{n,\theta}(s;\bmu)ds - \Dt \sum_{j=0}^{k} w_j \mathbf{F}_{n,\theta}(s_j;\bmu) \bigg\|_{L^\infty(\mathcal{P}; \mathbb{R}^n)}. 
    \end{equation*}
   If the scheme is convergent, then there exists a monotonically increasing function $\Dt \mapsto h(\Dt)$ such that $h(\Dt) \rightarrow 0$, as $\Dt \rightarrow 0$, and a constant $C > 0$ for which $E_{QUAD}(\Dt) \le C h(\Dt)$. Observe that $\Dt \mapsto h(\Dt)$ implicitly depends on $\mathbf{F}_{n,\theta}$. Then, there exists $\Dt^* = \Dt^*(\varepsilon; \mathbf{F}_{n,\theta})>0$ such that for any $\Dt \le \Dt^*$ it holds that
   \begin{equation*}
       E_{QUAD}(\Dt) \le C h(\Dt) \le C h(\Dt^*) \le 4^{-1}\varepsilon.
   \end{equation*} 
   Thus, hereon, we let $\Dt \le \Dt^*$. Being $\mathcal{T}_{\Dt} \subset [0,T]$ and by employing the triangular inequality we obtain
    \begin{equation*}
    \begin{aligned}
        E_{DISCR} :&= \sup_{k \in \{1, \ldots, N_{\Dt}\}} \|\bu_n(t_k) - \bu_{n,\theta}^k\|_{L^\infty(\mathcal{P}; \mathbb{R}^n)} \\
        & \le \sup_{k \in \{1, \ldots, N_{\Dt}\}} \bigg[\|\bu_n(t_k) - \bu_{n,\theta}(t_k)\|_{L^\infty(\mathcal{P}; \mathbb{R}^n)} + \|\bu_{n,\theta}(t_k) - \bu_{n,\theta}^k\|_{L^\infty(\mathcal{P}; \mathbb{R}^n)} \bigg] \\
        & \le\|\bu_n -  \bu_{n,\theta} \|_{L^\infty([0,T] \times \mathcal{P}; \mathbb{R}^n)} + \sup_{k \in \{1, \ldots, N_{\Dt}\}} \|\bu_{n,\theta}(t_k) - \bu_{n,\theta}^k\|_{L^\infty(\mathcal{P}; \mathbb{R}^n)}  \\
        & = E_{CONT} + \sup_{k \in \{1, \ldots, N_{\Dt}\}} \|\bu_{n,\theta}(t_k) - \bu_{n,\theta}^k\|_{L^\infty( \mathcal{P}; \mathbb{R}^n)} \\
        & = E_{CONT} + \sup_{k \in \{1, \ldots, N_{\Dt}\}} \bigg\|\int_{0}^{t_k} \mathbf{F}_{n,\theta}(s;\bmu)ds - \Delta  t \sum_{j=0}^{k} w_j \mathbf{F}_{n,\theta}(s_j;\bmu) \bigg\|_{L^\infty(\mathcal{P}; \mathbb{R}^n)} \\
        & = E_{CONT} + E_{QUAD}(\Dt) \le 2^{-1}\varepsilon.
    \end{aligned}
    \end{equation*}
    \item[(v)] \textit{Final error bound} \\
    Defining
    \begin{equation*}
        \hat{\bu}_{h}^{k}(\bmu) := \Psi'_{n,\theta}(\bu_{n,\theta}^k(\bmu)) = \Psi'_{n,\theta}\bigg(\Psi_{n,\theta}(\bu_{0,h}(\bmu)) + \Dt \sum_{j=0}^{k} w_j \mathbf{F}_{n,\theta}(s_j;\bmu)\bigg), 
    \end{equation*}
    through the triangular inequality, we obtain
    \begin{equation*}
        \begin{aligned}
        \sup_{k \in \{1, \ldots, N_{\Dt}\}} \|\bu_h(t_k; \bmu) & - \hat{\bu}_h^k(\bmu) \|_{L^\infty(\mathcal{P}; \mathbb{R}^{N_h})} \le \\
        & \le \sup_{k \in \{1, \ldots, N_{\Dt}\}} \bigg[\|\Psi'_{n}(\bu_{n}(t_k; \bmu)) - \Psi'_{n,\theta}(\bu_{n}(t_k; \bmu))\|_{L^\infty(\mathcal{P}; \mathbb{R}^{N_h})} \\
        & + \|\Psi'_{n,\theta}(\bu_{n}(t_k; \bmu)) - \Psi'_{n,\theta}(\bu_{n,\theta}^k(\bmu)) \|_{L^\infty(\mathcal{P}; \mathbb{R}^{N_h})} \bigg] \\
        & \le E_{DEC} + E_{LAT}.
        \end{aligned}
    \end{equation*}
    Then, we can bound $E_{DEC}$ with
    \begin{equation*}
    \begin{aligned}
        E_{DEC} : &= \sup_{k \in \{1, \ldots, N_{\Dt}\}} \|\Psi'_{n}(\bu_{n}(t_k; \bmu)) - \Psi'_{n,\theta}(\bu_{n}(t_k; \bmu))\|_{L^\infty(\mathcal{P}; \mathbb{R}^{N_h})}  \\
        & \le\|\Psi'_{n}(\bu_n(t;\bmu)) - \Psi'_{n,\theta}(\bu_n(t;\bmu))\|_{L^\infty([0,T] \times \mathcal{P}; \mathbb{R}^{N_h})} \le 2^{-1}\varepsilon.
    \end{aligned}
    \end{equation*}
    Moreover, thanks to the $\Psi'_{n,\theta}$ being 1$-Lip$, we obtain
    \begin{equation*}
    \begin{aligned}
        E_{LAT} :&= \sup_{k \in \{1, \ldots, N_{\Dt}\}} \|\Psi'_{n,\theta}(\bu_{n}(t_k; \bmu)) - \Psi'_{n,\theta}(\bu_{n,\theta}^k(\bmu)) \|_{L^\infty(\mathcal{P}; \mathbb{R}^{N_h})} \\
        &\le \sup_{k \in \{1, \ldots, N_{\Dt}\}} \|\bu_{n}(t_k; \bmu) - \bu_{n,\theta}^k (\bmu)\|_{L^\infty(\mathcal{P}; \mathbb{R}^n)} = E_{DISCR} \le 2^{-1} \varepsilon.
    \end{aligned}
    \end{equation*}
    Finally, we can conclude the proof, obtaining that 
    \begin{equation*}
    \begin{aligned}
         \sup_{k \in \{1, \ldots, N_{\Dt}\}} \|\bu_h(t_k; \bmu) - \hat{\bu}_h^k(\bmu) \|_{L^\infty(\mathcal{P}; \mathbb{R}^{N_h})} &\leq  E_{DEC} + E_{LAT} \le \varepsilon.
    \end{aligned}
    \end{equation*}
\end{itemize}
\end{proof}

\begin{figure}
    \centering
    \includegraphics[width=\textwidth]{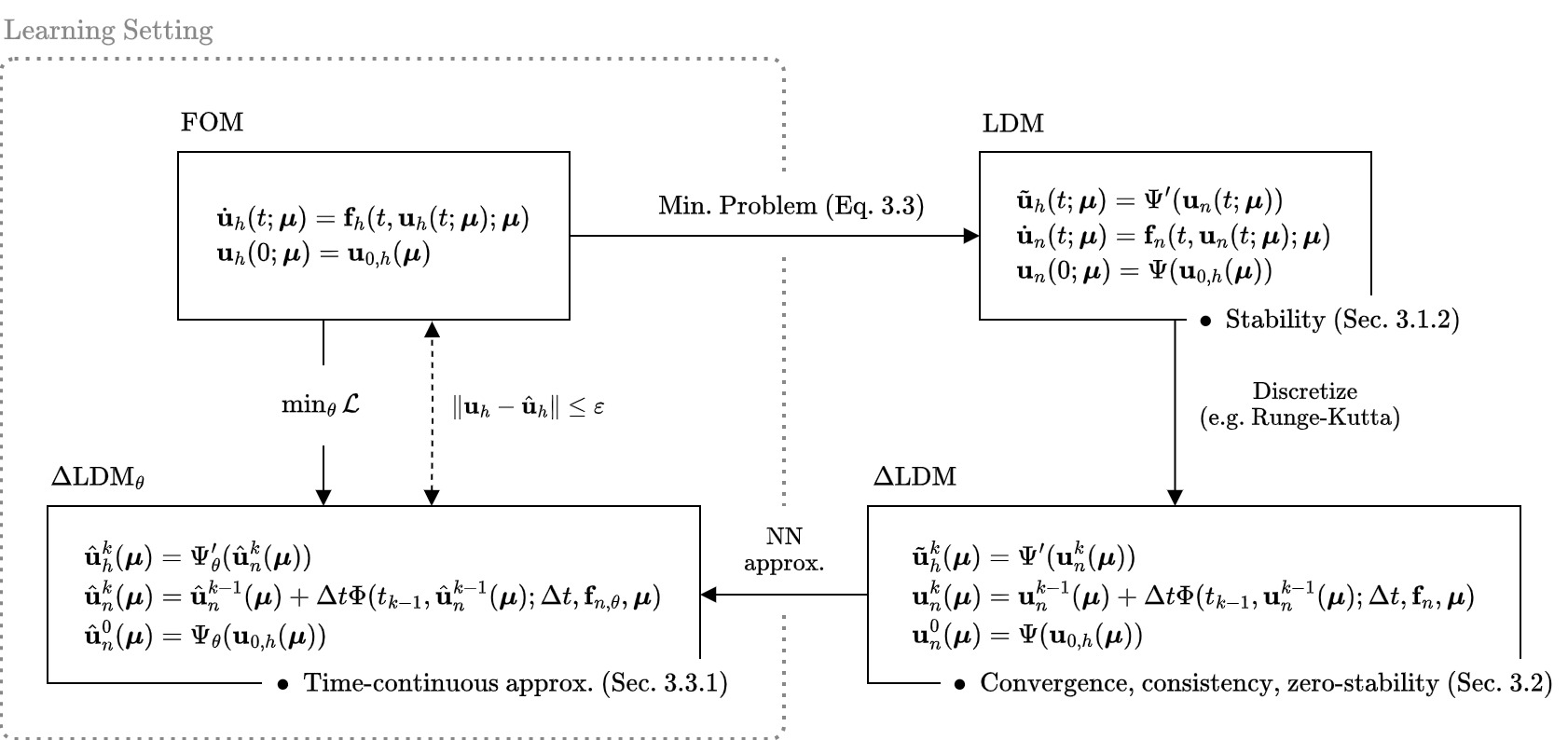}
    \caption{\textit{Framework overview.} The diagram summarizes the proposed framework, considering the learnable setting ($\dldm_\theta$), the time-continuous (LDM) and time-discrete  ($\dldm$) approximations, together with the respective main properties.}
    \label{fig:ldm_block_summary}
\end{figure}

\paragraph{Time-continuity in a learnable context.} 
Having introduced the $\dldm_\theta$, we aim at characterizing, within such learnable setting, the notion of \textit{time-continuous approximation} defined for a generic ROM in Section \ref{sec-2}. In particular, the dynamical nature of the $\dldm_\theta$, characterized by a \textit{continuous-time} inductive bias represented by the (discretized) latent IVP, allows us to query the learned solution with arbitrary precision by using suitably fine temporal discretizations that, in principle, could be different from the one employed during the learning procedure. 
In such a context, the time-continuous approximation property follows from the approximation result given in Theorem \ref{error_estimate}, and from the error bound \eqref{eq:error_decomposition} which leverages the convergence property of LDMs in discrete settings. Specifically, we aim at showing that 
\begin{equation*}
\label{eq:time-continuity-final-remark}
\sup_{t_k \in \mathcal{T}_{\Dt}}\|\bu_h(t_k;\bmu) - \hbu_h^k(\bmu)\| \overset{(i)}{\leq} C \sup_{t_k \in \mathcal{T}_{\Dt_{train}}}\|\bu_h(t_k;\bmu) - \hbu_h^k(\bmu)\| \overset{(ii)}{\leq} C \varepsilon
\end{equation*}
holds $\forall \Dt \leq \Dt_{train}$, independently of $\bmu \in \mathcal{P}$. To do that, we immediately notice that {\em (ii)} follows from Theorem 1 assuming $\Dt_{train} \leq \Dt^*$.
Then, we focus on proving {\em (i)} by characterizing the constant $C$ appearing in Definition \ref{def:timecont}. Thus, by following the same reasoning as in the derivation of the convergence properties of Section \ref{section:DLDM}, 
\begin{align*}
    E(\Delta t) := \sup_{t_k \in \mathcal{T}_{\Dt}}\|\bu_h(t_k;\bmu) - \hbu_h^k(\bmu)\|_{L^\infty(\mathcal{P})}  &\leq \|\mathbf{u}_h - \hat{\mathbf{u}}_h\|_{L^\infty([0,T]\times \mathcal{P})} + \sup_{t_k\in\mathcal{T}_{\Delta t}}\|\hat{\mathbf{u}}(t_k;\bmu) - \hat{\mathbf{u}}^k_h(\bmu)\|_{L^\infty(\mathcal{P})},\\
    &\leq E(0) + C_2 \Delta t^p.
\end{align*}
We remark that this draws a parallel with the error decomposition formula 
\eqref{eq:error_decomposition}. 
Under the reasonable assumption of bounded $E(0)=\|\bu_h(\bmu) - \hbu_h(\bmu)\|_{L^\infty([0,T]\times\mathcal{P})}$, it follows that $\forall \Dt_{train}>0, \ \exists C=C(\Dt_{train})>0$ such that
\begin{equation*}
     E(\Dt) \leq E(0) + C_2\Dt^p_{train} = CE(\Dt_{train}), \quad \forall \Dt \leq \Dt_{train}.
\end{equation*}
Thus, taking $\Dt_{train}\leq\Dt^*$ as in Theorem 1, the time-continuity property follows from 
\begin{equation*}
    E(\Dt) \leq C E(\Dt_{train}) \leq C\varepsilon, \quad \forall \Dt \leq \Dt_{train},
\end{equation*}
with $C = C(\Dt_{train},T,p) = (E(0) + C_2\Dt^p_{train})E(\Dt_{train})^{-1}$. In particular, this highlights that $C \rightarrow 1$ as the training discretization is refined ($\Dt_{train}\rightarrow 0$), which means that in the limit we are able to preserve the training accuracy over refinements of the testing discretization. Additionally, the dependence of $C$ on $p$ reveals the role of the convergence order of the integration method employed in the $\dldm_{\theta}$ within the context of time-continuity. We note that the dependence of the constant $C$ on the temporal interval length $T$ is implicit through $C_2$.

\begin{remark}
    We remark that Eq.\eqref{eq:ldm_theta_explicit} can be traced back to the original $\dldm_\theta$ formulation \eqref{eq:learnable_DLDM}. Moreover, we notice that the encoder architecture may suffer from the curse of dimensionality, as the number of active weights may scale as $O(\varepsilon^{-N_h})$, which becomes prohibitive as $N_h$ increases. However, the encoder can be replaced by a feedforward neural network $\bmu \mapsto \mathbf{g}(\bmu) \approx \Psi(\bu_{0,h}(\bmu))$ in a DL-ROM fashion \cite{fresca2021comprehensive, brivio2023error}, thus requiring at most $O(n \varepsilon^{-n_\mu})$ weights.
\end{remark}

\section{Deep learning-based LDMs}
\label{section:DLLDM}
In the following section, the architectural choices regarding the design space of the proposed framework, in a DL context, are described. As introduced in Section \ref{section:LLDM}, the aim of a \textit{learnable} LDM is to model the evolution of a parameterized FOM state in a data-driven manner, by means of learnable maps $\Psi_\theta, \Psi'_\theta$ tackling the dimensionality reduction task, and a learnable latent dynamics $\bff_{n,\theta}$ modeling the latent state evolution. In particular, the identification of a ROM state $\bu_n(t;\bmu)$, associated to a full-order parameterized time-dependent state $\bu_h(t;\bmu)$, and the modeling of its time-evolution, is a task of critical importance in the context of PDEs surrogate modeling. 
To this end, a commonly adopted data-driven approach consists of employing: {\em (i)} nonlinear dimensionality reduction strategies, such as \textit{autoencoders} (AEs) \cite{hinton1993autoencoders}, to learn a meaningful latent representation lying on a low-dimensional manifold \cite{lee2020model, Kim_2022}, and {\em (ii)} a proper time-stepping scheme, in order to model the time-evolution of the latent state. Multiple data-driven approaches, combining the concept of dimensionality reduction and latent time-evolution, have been proposed, mainly by relying on autoregressive or recurrent strategies \cite{gonzalez2018deep, Geneva_2020, 10.3934/mine.2023096, Conti_2023} in time-discrete settings. 

A more recent approach consists in relying on continuous-time modeling strategies, via \textit{neural ordinary differential equations} (NODEs) \cite{NEURIPS2018_69386f6b}, to model the evolution of the latent state 
\cite{Lee_2021, Di_Sante_2022,legaard2022constructing, Chen_2022, farenga2022neural, Lazzara2023}. In particular, the family of models that combines AEs and NODEs, specialized in the context of multi-query reduced order modeling by \cite{Lee_2021}, aligns with our theoretical LDM formulation. Specifically, within the introduced $\dldm_\theta$ learnable framework, an AE neural network identifies the composition $\Psi'_\theta \circ \Psi_\theta$, while a parameterized NODE represents the latent ODE $\dot{\bu}_n(t;\bmu) = \bff_{n,\theta}(t,\bu_n(t;\bmu);\bmu)$, within a DL context. 
Thus, building up on such framework, we proceed to describe the architectural choices that underlie the construction of DL-based LDMs.\\
In this respect, we remark that the proposed framework does not put constraints on the underlying neural networks' design. On the other hand, it is clear that the obtained theoretical results are related to the approximation error and do not take into account the optimization error. However, in practice, reducing the contribution of the optimization error is crucial to ensure an acceptable accuracy in the inference phase. To do that, ultimately aiming at enhancing  the convergence of the training procedure, we now introduce suitable architectural choices and we propose an adequate training strategy. In particular, our proposed design aims at enhancing the interpretability of the latent representation through: {\em (i)} an autoencoding strategy capable of retaining high-dimensional spatial features at the latent level, allowing to directly relate the latent and high-fidelity state evolutions, and {\em (ii)} an affinely-parametrized NODE that preserves the dynamical nature of the FOM solution in the latent space.

\subsection{Spatially-coherent autoencoding}
\label{sec:AE}
Recent approaches address the dimensionality reduction of a full-order state $\bu_h(t;\bmu)\in \mathbb{R}^{N_h}$ in a data-driven manner, by relying on AEs within an unsupervised learning setting. Specifically, AEs based on convolutional architectures are often adopted, due to their ability to leverage spatial information and their sparse nature, which allows them to handle high-dimensional data effectively while maintaining a reasonable parameter count \cite{lee2020model, fresca2021comprehensive, fresca2022pod, romor2023non}. 

In particular, we proceed by considering the full-order solution $\bu_h(t;\bmu)\in \mathbb{R}^{N_h}$ of \eqref{eq:FOM}, obtained by means of a spatial-discretization of the associated PDE, via traditional high-fidelity techniques, such as the Galerkin-finite element method. These methods entail the introduction of a, possibly unstructured, computational grid $\Omega_h$, serving as a discretization of the bounded domain $\Omega \subset \mathbb{R}^d$, with $h$ the spatial discretization parameter, representing the maximum diameter of the grid elements. The choice of $h$ influences the granularity of the discretization, that is, smaller values result in finer resolutions, leading to higher FOM dimensions $N_h$, and thus to increased computational costs. In traditional high-fidelity techniques, the solution is represented by a vector $\bu_h\in \mathbb{R}^{N_h}$ corresponding to the DoFs of the problem. However, this representation could lack \textit{spatial coherence}, especially in the context of unstructured grids. In such scenarios, neighboring entries in the vector may correspond to spatial points that are not necessarily adjacent in both the physical and computational domains. 
This lack of spatial coherence prevents convolutional architectures from effectively leveraging local spatial information.
Additionally, using reshape operations, whether at the input level to adjust tensor shapes or due to the inclusion of dense layers requiring flattening operations, further restricts the exploitation of spatial information.

We proceed to address these issues by adopting a dimensionality reduction strategy based on a fully-convolutional AE architecture, thus able to preserve spatial features at the latent level given convolutions' local nature. 
Specifically, in order to avoid employing input reshape operations, on which DL-ROMs architectures often rely  \cite{fresca2021comprehensive, franco2023deep}, a structured grid $\Omega^\natural_h$, obtained via a uniform discretization\footnote{The novel discretization $\Omega^\natural_h$ uses as discretization parameter a value as close as possible to the original one $h$, resulting in the same number of DoFs $N_h$, thereby we avoid introducing additional notation.} of the original domain $\Omega$, is introduced.
Being $\Omega^\natural_h$ a uniform discretization, it can be now processed by the AE convolutional layers without resorting to reshaping operations to suitably adapt the high-fidelity state's shape. Thus, the usually employed reshape operation is now replaced by an interpolation operation, so that the state $\bu_h\in\mathbb{R}^{N_h}$ computed on $\Omega_h$ is interpolated onto the uniform grid $\Omega^\natural_h$, by means of a suitable interpolation operator $I_\natural$. As a consequence, we obtain a state vector $I_\natural \bu_h\in\mathbb{R}^{N_h}$ which exhibits a spatially-coherent ordering, and is thus prepared to be inputted into the convolutional architecture, thanks to the adopted uniform discretization\footnote{Since the method will always rely on $I_\natural$ to process the FOM state, we will refer to the spatially-coherent FOM state $I_\natural \bu_h$ simply as $\bu_h$.}.

In the following, the internal structure of the convolutional AE is described. 
Specifically, the architecture relies on a composition of convolutional layers and interpolation operations. 
Both the encoder $\Psi_\theta$ and decoder $\Psi'_\theta$ are structured into $L$ down- and up-sampling stages, respectively indicated as $s_l:\mathbb{R}^{2^{-(l-1)}N_h}\rightarrow \mathbb{R}^{2^{-{l}}N_h}$ and $s'_l:\mathbb{R}^{2^{-{l}}N_h}\rightarrow \mathbb{R}^{2^{-(l-1)}N_h}$, with $l=1,\ldots,L$. Such stages are then composed, resulting in $\Psi_\theta =  s_L \circ s_{L-1} \circ \cdots \circ s_1$ and $\Psi'_\theta =  s'_1 \circ s'_{2} \circ \cdots \circ s'_{L}$, as shown in the following 
\[\begin{tikzcd}
{\mathbf{u}_h\in \mathbb{R}^{N_h}} && {\mathbb{R}^{2^{-1}{N_h}}} && \cdots && {\mathbb{R}^{2^{-L}{N_h}}\ni \mathbf{u}_n}
\arrow["{s_1}", shift left=2, shorten <=13pt, shorten >=13pt, from=1-1, to=1-3]
\arrow["{s'_1}", shift left=2, shorten <=13pt, shorten >=13pt, from=1-3, to=1-1]
\arrow["{s_2}", shift left=2, shorten <=14pt, shorten >=14pt, from=1-3, to=1-5]
\arrow["{s'_2}", shift left=2, shorten <=14pt, shorten >=14pt, from=1-5, to=1-3]
\arrow["{s_L}", shift left=2, shorten <=14pt, shorten >=14pt, from=1-5, to=1-7]
\arrow["{s'_L}", shift left=2, shorten <=14pt, shorten >=14pt, from=1-7, to=1-5]
\end{tikzcd}\]
with the first row referring to the encoder, while the second one refers to the decoder;
here, we denote by $n = 2^{-L}N_h$ the reduced state dimension. 
Both $l$-th level stages, $s_l$ and $s'_l$, rely on the same internal structure, reading respectively as
\begin{equation}
    s_l = I_{2^{l-1}h}^{2^{l}h} \circ R_l, \qquad
    s'_l = I_{2^{l}h}^{2^{l-1}h} \circ R'_l,
\end{equation}
where: 
\begin{enumerate}[(i)]
\item $R_l$ and $R'_l$ refer to a preactivation \textit{residual} convolutional block \cite{he2016identity}, with learnable convolutional kernels $\kappa_{in}, \kappa_{out}$, and suitable nonlinearity $\sigma$, both reading as
\begin{equation}
\bh_{l-1} \mapsto \bh_{l-1} + \kappa_{out}\ast \sigma(\kappa_{in}\ast\sigma(\bh_{l-1})).
\end{equation}
with $\bh_{l-1}$ a hidden state, output of the $(l-1)$-th level;
\item $I_{2^{l-1}h}^{2^{l}h}:\mathbb{R}^{2^{-(l-1)}N_h} \rightarrow \mathbb{R}^{2^{-l}N_h}$ and $I_{2^l h}^{2^{l-1}h}:\mathbb{R}^{2^{-l}N_h} \rightarrow \mathbb{R}^{2^{-(l-1)}N_h}$ are the interpolation operators. The former, in $s_l$, performs a down-sampling operation by halving the spatial dimension, resulting in a coarsening operator. On  the other hand, within $s_l'$, the operation is reversed, to perform up-sampling.
\end{enumerate}

Concerning the implementation details, we employ convolutional layers with a kernel size of 3. Both the encoder and the decoder are equipped with linear convolutional layers as their first and last layers. We adopt an ELU nonlinearity \cite{clevert2016fast}, being continuously differentiable with bounded first derivative, thus satisfying the Lipschitz requirement on $\Psi_\theta$ and $\Psi'_\theta$.
We emphasize that our approach is motivated by the utilization of regular domains. For the case of domains characterized by irregularities, or more complex geometries, approaches such as graph neural networks (GNNs) or mesh-informed strategies may be more suitable options \cite{franco2023geom, franco2023meshinformed, pfaff2021learning}; however, these aspects are beyond the focus of this work, and can represent further extensions of the proposed framework.

\begin{figure}[t]
    \centering
    \includegraphics[width=0.52\textwidth]{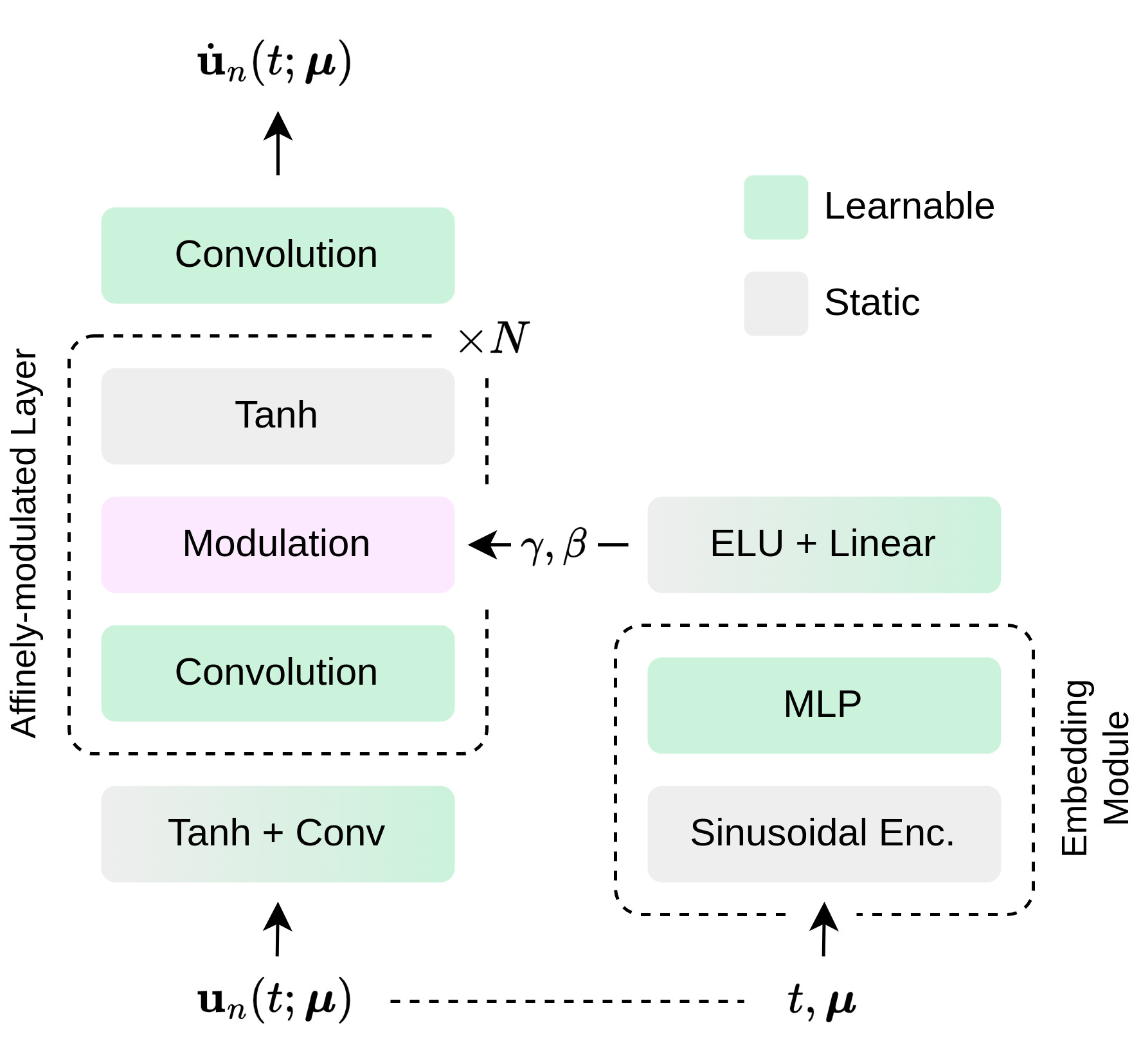}
    \caption{\textit{Affinely-parameterized latent dynamics architecture.}
    Detailed structure of the proposed convolutional parameterized latent dynamics. The two main components -- the embedding module, which processes the time-parameters inputs, and the stack of affinely-modulated convolutional layers -- are highlighted.
    }
    \label{fig:apnode}
\end{figure}

\subsection{Affinely-parameterized latent dynamics}
\label{sec:affine-param-latent-dynamics}
The concept of \textit{neural ordinary differential equation} (NODE) \cite{pmlr-v80-lu18d, NEURIPS2018_69386f6b, kidger2022neuraldifferentialequations} represents an implicitly-defined class of models which -- unlike traditional modeling approaches directly establishing an input-output relation $\bx\mapsto \by$ as $\by=\bff_\theta(\bx)$ -- adopts the following ODE-IVP formulation
\begin{equation*}
    \dot{\by}(t) = \bff_\theta(t,\by(t)),  \qquad \by(0)=\bx.  
\end{equation*}
Here, $\bff_\theta$ is now a neural network parameterizing the vector field by which the input is continuously evolved into the output over a prescribed time interval $t\in[0,T]$ (see Appendix \ref{appendix:continuous-time}). 
The inherently continuous-time nature of this family of models makes them well-suited for modeling time-dependent data \cite{NEURIPS2019_42a6845a}. Specifically, their formulation has led to a wide adoption of such architectures in the field of data-driven reduced order modeling, to model the reduced dynamics $\bff_{n,\theta}$ in combination with AEs \cite{Lee_2021, Di_Sante_2022, legaard2022constructing, Chen_2022}. Recently, \cite{Lee_2021} extended the NODE formulation to incorporate parametric information represented by $\bmu\in\mathcal{P}$. This enhancement improves the expressivity of the neural network, resulting in the concept of parameterized neural ODEs (PNODEs). In contrast to the prevailing approaches of using dense neural networks to parameterize the PNODE dynamics, we explore the adoption of a convolutional architecture. This choice is motivated by the intrinsic sparsity of convolutional layers, resulting in a smaller network size footprint. Moreover, such choice allows us to leverage the spatial information encoded by the convolutional AE, possibly enabling a spatial-coherence between the encoded and the high-dimensional states, thus enhancing interpretability. 
In the following, the architectural choices in the design of an \textit{affinely-parameterized latent dynamics} $\bff_{n,\theta}$, are described. Specifically, we delineate the embedding strategy employed for the temporal and parametric information $(t,\bmu)$, by means of \textit{affine modulation} techniques, departing from traditional concatenation or hypernetwork-based approaches. 

\paragraph{Time-parameters embedding.}
A modeling challenge is represented by the effective embedding of the time $t\in\mathbb{R}_+$ and parameters $\bmu\in\mathcal{P}$ into the reduced representation, leading to a parameterized latent state $\bu_n(t;\bmu)$. Such problem is naively tackled by directly providing $(t,\bmu)$ as inputs to the neural network architecture, after performing a rescaling operation to normalize their ranges. 
In recent NODE-based ROM approaches, the main strategies for injecting additional information into the dynamics is represented by either concatenation or by relying on hypernetworks \cite{Lee_2021,wen2023reduced}. Our approach departs from standard parameters embedding techniques. In particular, for embedding temporal and parametric information, we draw inspiration from recent advances in embedding strategies widely adopted in state-of-the-art DL architectures, from diffusion models to transformers \cite{vaswani2023attention}. 
Rather than using a basic rescaling of scalar features on $t$ and the components $\mu_j$ of $\bmu$, we propose to map them to a higher-dimensional vector space $\mathbb{R}^{k}$ via a \textit{sinusoidal encoding}, where $k\in\mathbb{N}$ (even) represents the number of frequency components. 
In particular, the employed mappings are $t\mapsto \tilde{\mathbf{t}}\in\mathbb{R}^k$ for the time feature, and $\bmu\mapsto \tilde{\bmu}\in\mathbb{R}^{k n_\mu}$ for the parameters vector, since the embedding is applied component-wise to each scalar parameter $\mu_j, \ j=1,\ldots,n_\mu$.
Moreover, sinusoidal encoding allows mapping scalar features to vectors with entries bounded in the range $[-1,1]$, thereby avoiding the need for separate rescaling operations. The specific form of the employed encoding reads as
\begin{equation}
    t\mapsto \tilde{\mathbf{t}}(t) = \begin{bmatrix}
        \sin(\omega_1 t) & \sin(\omega_2 t) & \cdots & \sin(\omega_{\frac{k}{2}} t) & \cos(\omega_1 t) & \cos(\omega_2 t) & \cdots & \cos(\omega_{\frac{k}{2}} t)
    \end{bmatrix},
    \label{eq:sinenc}
\end{equation}
\begin{equation*}
    \omega_j =  \frac{1}{T_\text{max}^{j-1}}, \qquad j=1,...,\frac{k}{2},
\end{equation*}
where $T_\text{max}$ is a large enough period, allowing  the scalar input $t$ to be represented by a $k$-dimensional signal $\tilde{\mathbf{t}}$ of period $2\pi T_\text{max}^{\frac{k}{2}-1}$, thus exceeding the typical time-scales of the modeled phenomena $(\gg T_\text{train}, T_\text{test})$. The mapping \eqref{eq:sinenc} can be extended in a straightforward way to the scalar components $\mu_j$ of the parameters vector $\bmu\in\mathbb{R}^{n_\mu}$ to perform the encoding of each component, resulting in $\tilde{\bmu}$.
Subsequently, the encoded features $(\tilde{\mathbf{t}}, \tilde{\bmu})$ are concatenated and processed by a multi-layer perceptron (MLP) with ELU nonlinearity, resulting in a combined embedding $\bxi\in \mathbb{R}^{d_e}$, containing both the temporal and parametric information. Similar embedding approaches, based on sinusoidal encoding, have been adopted in autoregressive architectures employed in the context of DL-based surrogate modeling, for embedding either the temporal or parametric information \cite{gupta2022multispatiotemporalscale, li2024latent}. Finally, the embedded information $\bxi$ is injected into the convolutional dynamics $\bff_{n,\theta}$ via \textit{affine modulation} strategies, as detailed in the following.

\paragraph{Parameterization via affine modulation.}
Effectively including additional information, alongside the primary input, to guide and control learning, is a task of central importance in DL.
In particular, in the context of multi-query DL-based surrogate modeling, this translates into the task of including the information related to $\bmu\in\mathcal{P}$ to construct the surrogate model.
Moreover, embedding the time-parameters information is of paramount importance in the specific case of learning a parameterized dynamics $\bff_{n,\theta}(t,\ \cdot\ ;\bmu)$, taking into account both the temporal and parametric information $(t,\bmu)\in\mathbb{R}_+\times\mathcal{P}$. A possible approach to tackle this problem consists in using hypernetworks, where an auxiliary network takes the parametric information as input and outputs the weights of the main network, represented in our case by the latent dynamics $\bff_{n,\theta}$. While effective, this approach often results in models of considerable size, due to the necessity to infer the main network's entire set of parameters, thus hindering scalability and possibly leading to memory constraints and computational overhead.
To cope with this issue, \cite{wen2023reduced} proposes a factorization-based approach, applied to the dense layers of the ODE-Net modeling the latent dynamics. Specifically, a hypernetwork is employed to infer only the diagonal entries $\Sigma(\bmu)_{ij}$ of a SVD-like decomposition of the weights matrices $W_\theta(\bmu) = U\Sigma(\bmu)V^T$ of the latent dynamics dense layers, thus reducing the cost associated to the hypernetwork. In contrast, mainly motivated by our fully-convolutional architecture, we adopt an \textit{affine modulation} approach, drawing inspiration from recent advancements in conditional image generation, where class or textual information influences the generative process.
In particular, in conditional normalization-based methods \cite{ioffe2015batch, ba2016layer}, a function of the conditioning input $\bxi$ is learned to output the normalization layers scaling $\bgamma(\bxi)$ and shifting $\bbeta(\bxi)$ parameters, performing an affine transformation of the following form
\begin{equation}
    \tilde{\mathbf{x}} = \bgamma(\bxi)\odot \mathbf{x} \ \oplus \ \bbeta(\bxi),
    \label{eq:affinemod}
\end{equation}
with $\mathbf{x}$ an arbitrary (possibly hidden) feature of the network being conditioned, to which a prior normalization technique has been applied. In particular, such conditioning technique has found broad application in diffusion models, to incorporate the timestep and class embedding into the residual blocks of the UNet employed in the diffusion process \cite{nichol2021improved, dhariwal2021diffusion}. A more general framework, complementing the previous ones, is represented by \cite{perez2017film}, which employs \textit{feature-wise} affine transformations to modulate the network's intermediate features, based on additional input sources. 
More broadly, as pointed out in \cite{perez2017film}, these methods of injecting conditional signals via affine feature modulation can be viewed as instances of hypernetwork architectures \cite{ha2016hypernetworks}, where a subsidiary network outputs the parameters of the main network, based on the conditioning input. However, affine modulation schemes are much more efficient since the number of parameters required for the affine transformation is significantly smaller than the main network's total parameters count.

Building up on affine modulation techniques, we define a convolutional affinely-parameterized NODE, to include both time $t$ and parameters $\bmu$ into the latent dynamics $\bff_{n,\theta}$, by means of the embedding $\bxi(t,\bmu)$. 
Thus, relying on the conditioned affine transformation \eqref{eq:affinemod} to include the embedded parametric information $\bxi(t,\bmu)$, we can define an affinely-modulated convolutional layer, representing the core of the convolutional \textit{affinely-parameterized latent dynamics} $\bff_{n,\theta}$ illustrated in Figure \ref{fig:apnode}, reading as 
\begin{equation}
    \bh_{i-1} \mapsto \bgamma_i(\bxi)\odot (\kappa_i \ast \bh_{i-1}) \ \oplus \ \bbeta_i(\bxi),
    \label{eq:modulatedconv}
\end{equation}
where $\kappa_i$ represent the learnable kernel of the $i$-th convolutional layer, with $n_c$ channels, and $\bh_{i-1}$ the modulated hidden state. The affine-parameterization is performed via the scaling and shifting parameters $\bgamma_i(\bxi), \bbeta_i(\bxi) \in \mathbb{R}^{n_c}$, which modulate the $i$-th layer preactivation along the channel dimension, via channel-wise multiplication $\odot$ and sum $\oplus$, respectively. The dependence of $\bgamma_i(\bxi), \bbeta_i(\bxi)$ on the time and parameter instances $(t,\bmu)$ is implicit, by means of $\bxi(t,\bmu)$, produced by the embedding module previously introduced. In particular, the mapping $\bxi \mapsto (\bgamma_i, \bbeta_i)$ is modeled by a linear layer preceded by a nonlinearity $\sigma$, reading as $W \sigma ( \ \cdot \ ) \ + \mathbf{b} : \mathbb{R}^{d_e} \rightarrow \mathbb{R}^{2n_c}$. The choice of performing the affine transformation along the channels dimension makes the $(t,\bmu)$-parameterization agnostic to the spatial dimension of the latent state, decoupling the size of the embedding network from the latent state spatial dimension, thus making the technique scalable and computationally efficient. 

Regarding the implementation details, convolutional layers with a kernel size of 3 are employed. The employed activation functions include an ELU nonlinearity within the embedding module, and $\tanh$ in the modulated convolutional layers,  both being Lipschitz continuous.
Finally, in Algorithm \ref{alg:latent_dynamics}, we outline the internal structure of the proposed \textit{affinely-parameterized latent dynamics} $\bff_{n,\theta}$, summarizing the components described above, in the case of two modulated convolutional layers of kernels $\kappa_1,\kappa_2$.
\begin{algorithm}[h]
\caption{Affinely-parameterized latent dynamics $\bff_{n,\theta}$}
\begin{algorithmic}[1]
\Require Latent state $\bu_n(t;\bmu)\in\mathbb{R}^n$, time instance $t\in\mathbb{R}_+$, parameter instance $\bmu\in\mathcal{P}\subset\mathbb{R}^{n_\mu}$ 
\Ensure  Latent state time-derivative $\dot{\bu}_n(t;\bmu)\in\mathbb{R}^n$
\State Encode time instance $\tilde{\mathbf{t}}\leftarrow \operatorname{SinusoidalEncoding}(t,k,T_\text{max})\in\mathbb{R}^k$
\State Encode parameter instance $\tilde{\mathbf{\bmu}}\leftarrow \operatorname{SinusoidalEncoding}(\bmu,k,T_\text{max})\in\mathbb{R}^{k n_\mu}$
\State Time-parameter embedding $\bxi=\operatorname{MLP}(\tilde{\mathbf{t}},\tilde{\bmu}) \in \mathbb{R}^{d_e}$
\State Affine modulation parameters $\bgamma_1,\bgamma_2,\bbeta_1,\bbeta_2 \leftarrow W\sigma(\bxi)+\mathbf{b} \in \mathbb{R}^{4n_c}$
\State Input convolution $\bh = \tanh(\kappa_\text{in}\ast \bu_n)$

\State Convolve and modulate $\bh = \tanh( \bgamma_1 \odot (\kappa_1 \ast \bh) \ \oplus \ \bbeta_1 )$
\State Convolve and modulate $\bh = \tanh( \bgamma_2 \odot (\kappa_2 \ast \bh) \ \oplus \ \bbeta_2 )$
\State Output convolution $\dot{\bu}_n(t;\bmu) = \kappa_\text{out}\ast \bh$
\end{algorithmic}
\label{alg:latent_dynamics}
\end{algorithm}

\begin{remark}
Adopting a bounded activation function within the parameterized dynamics $\bff_{n,\theta}$, as $\tanh$, enables a straightforward way to estimate $\|\bff_{n,\theta}\|_{\infty}$. Indeed, considering a 1-dimensional convolutional kernel $\kappa_\text{out}$ of size $2k+1$ with $c$ input channels and 1 output channel, and given $\tanh$ bounded range $(-1,1)$, it follows that
\begin{equation*}
    \|\bff_{n, \theta}\|_\infty \leq \sum_{i=1}^c\sum_{i=-k}^{k} |\kappa_\text{out}^{ij}|.
\end{equation*}
\end{remark}

\subsection{Training and testing schemes}
In the following, the training and testing procedures for the DL-based LDM are described. We proceed by denoting with $\bS_h\in\mathbb{R}^{N_h \times N_s}$ and $\bM\in\mathbb{R}^{(n_\mu+1)\times N_s}$ the matrices collecting the high-fidelity solutions $\bu_h(t_i;\bmu^j)$, and the associated time-parameter instances $(t_i,\bmu^j)$, respectively. We indicate by $N_s=N_t N_\mu$ the total number of available snapshots, with $N_t$ the length of the collected trajectories, and $N_\mu$ the total number of trajectories computed for different instances of the parameters $\bmu^j$, resulting in
\begin{equation*}
    \bS_h = \bigg[ \ \bu_h(t_0;\bmu^1)\ | \ \cdots \ | \ \bu_h(t_{N_t-1};\bmu^{1}) \ | \ \cdots \ | \ \bu_h(t_0;\bmu^{N_\mu})\ | \ \cdots \ | \ \bu_h(t_{N_t-1};\bmu^{N_\mu}) \ \bigg],
\end{equation*}
\begin{equation*}
    \bM = \Bigg[ \
    \begin{matrix}
        \bmu^1\\
        t_0
    \end{matrix}\ 
    \bigg | 
    \ \cdots \ 
    \bigg | 
    \ 
    \begin{matrix}
        \bmu^1\\
        t_{N_t-1}
    \end{matrix}\  
    \bigg | 
    \ \cdots \ 
    \bigg | 
    \
    \begin{matrix}
        \bmu^{N_\mu}\\
        t_0
    \end{matrix}\ 
    \bigg | 
    \ \cdots \ 
    \bigg | 
    \ 
    \begin{matrix}
        \bmu^{N_\mu}\\
        t_{N_t-1}
    \end{matrix}
    \ \Bigg].
\end{equation*}
Before the start of the training routine, the training and validation snapshot matrices $\bS_h^{train}, \bS_h^{val}$, obtained by means of a splitting along the temporal and parameters dimensions, accordingly to splitting ratios $\alpha,\beta \in(0,1)$, undergo a normalization step to rescale the features in the [-1,1] range, reading as
\begin{equation}
    \bS_h^{*} \mapsto 2\cdot\frac{\bS_h^{*}\ - \ \min(\bS^{train}_{h})}{\max(\bS^{train}_{h})-\min(\bS^{train}_{h})} - 1,
    \label{eq:scale}
\end{equation}
with $\bS_h^{*}$ indicating either $\bS_h^{train}$ or $\bS_h^{val}$. When dealing with a FOM state composed of multiple scalar components, transformation \eqref{eq:scale} is applied separately, rescaling each component. As follows, Algorithm \ref{alg:training} and \ref{alg:testing} provide a detailed description of the training and testing schemes, respectively. Specifically, the outlined training procedure builds up on the minimization problem \eqref{eq:dldp} and Algorithm \ref{alg:pseudo-training}.
In particular, given a collection of snapshots $\bS_h^*$,  either belonging to the training or validation set, we denote by $\bS_{0,h}^*$ the collection of the initial values $\{\bu_h(t_0;\bmu^j)\}_{j=1}^{N_\mu}$, obtained by extracting the slice referring to the initial time instance $t_0$. Then, $\bS_{0,n}^*$ refers to the collection of \textit{latent} initial values, obtained by applying the encoder function $\Psi_\theta$ to $\bS_{0,h}^*$. Similarly, $\bS_{n}^*$ refers to the evolution of the latent states, obtained by integrating the latent dynamics $\bff_{n,\theta}$ accordingly to a prescribed RK scheme. Then $\hat{\bS}_{h}^*$ denotes the LDM approximation of the FOM state evolution, output of the decoder function $\Psi'_\theta$, applied to the previously computed latent representations $\bS_{n}^*$. Given the approximation $\hat{\bS}_{h}^*$, the inverse transformation of \eqref{eq:scale} is applied in order to rescale the model output to the original range.
The optimization procedure involved in the training, outlined in Algorithm \ref{alg:training} (8-15), relies on Adam algorithm \cite{kingma2017adam} to perform the model parameters update, by employing a suitable learning rate $\eta$ and weight decay $\lambda$. 

\begin{algorithm}[ht]
\caption{$\dldm_\theta$ training algorithm (w/ sub-trajectories)}
\begin{algorithmic}[1]
\Require Snapshot matrix $\bS_h\in\mathbb{R}^{N_h\times N_s}$, parameter matrix $\bM\in\mathbb{R}^{(n_\mu+1)\times N_s}$, time-parameter training-validation split ratios $\alpha, \beta\in(0,1)$, learning rate $\eta$, weight decay $\lambda$, batch size $N_b$, sub-trajectories length $2\leq\ell\leq \min\{N_t^{train},N_t^{val}\}$, maximum number of epochs $N_{epochs}$, early-stopping criterion, Runge-Kutta scheme RK 
\Ensure  Optimal model parameters $\btheta^*= (\btheta_{\Psi}^*, \btheta_{\Psi'}^*, \btheta_{\bff_n}^*)$
\State Split data $\bS_h=[\bS_h^{train}, \bS_h^{val}], \ \bM=[\bM^{train}, \bM^{val}]$ 
\Statex \quad with $N_t^{train}=\alpha N_t, \ N_t^{val}=(1-\alpha) N_t$, and $N_\mu^{train}=\beta N_\mu, \ N_\mu^{val}=(1-\beta) N_\mu$ 
\State Normalize $\bS_h$ accordingly to \eqref{eq:scale}
\State Assemble \textit{train} sub-traj. $\tilde{\bS}_h^{train}\in\mathbb{R}^{N_h\times\ell\times N^{train}_\mu(N_t^{train}-\ell+1)}, \ \tilde{\bM}^{train}\in\mathbb{R}^{(n_\mu+1)\times\ell\times N^{train}_\mu(N_t^{train}-\ell+1)}$
\Statex \quad with $N_{batches}^{train} = N^{train}_\mu(N_t^{train}-\ell+1) / N_b$
\State Assemble \textit{val} sub-traj. $\tilde{\bS}_h^{val}\in\mathbb{R}^{N_h\times\ell\times N^{val}_\mu(N_t^{val}-\ell+1)}, \ \tilde{\bM}^{val}\in\mathbb{R}^{(n_\mu+1)\times\ell\times N^{val}_\mu(N_t^{val}-\ell+1)}$
\Statex \quad with $N_{batches}^{val} = N^{val}_\mu(N_t^{val}-\ell+1) / N_b$
\State i = 0
\While {$(\neg\text{early-stopping}\  \wedge \ i\leq N_{epochs})$}
\For{$k = 1 : N_{batches}^{train}$}
\State Get $k$-th mini-batch $(\bS_h^{batch},\bM^{batch})\subseteq(\tilde{\bS}_h^{train},\tilde{\bM}^{train})$
\State Extract initial values $\bS_{0,h}^{batch}$ from $\bS_h^{batch}$
\State Extract time and parameters instances $(\mathbf{t}^{batch}, \bmu^{batch})\leftarrow \bM^{batch}$
\State Project initial values $\bS_{0,n}^{batch} = \Psi_\theta(\bS_{0,h}^{batch};\btheta_{\Psi})$
\State Latent ODE integration $\bS_{n}^{batch} = \text{RK}(\bff_{n,\theta}, \bS_{0,n}^{batch}, \mathbf{t}^{batch}, \bmu^{batch}; \btheta_{\bff_n})$
\State Reconstruct trajectories $\hat{\bS}_h^{batch} = \Psi'_\theta(\bS_n^{batch};\btheta_{\Psi'})$
\State Accumulate loss $\mathcal{L}(\btheta)$ on $(\bS_h^{batch},\bM^{batch})$, compute $\nabla_\theta\mathcal{L}$
\State Update model parameters $\btheta\leftarrow \operatorname{Adam}(\eta,\lambda,\nabla_\theta\mathcal{L},\btheta)$
\EndFor
\State Repeat (7-13) on $(\bS_h^{val},\bM^{val})$ 
\State Accumulate loss $\mathcal{L}(\btheta)$ on $(\bS_h^{val},\bM^{val})$
\State i++
\EndWhile
\end{algorithmic}
\label{alg:training}
\end{algorithm}

\begin{remark}[\textit{Temporal regularization}]
\label{rmk:temporal_reg}
To reduce the dependence on the sub-trajectories' length $\ell$, a temporal regularization approach \cite{NEURIPS2020_a9e18cb5} may be adopted, involving randomly sampled integration interval lengths. It can be implemented in Algorithm \ref{alg:training} by fixing a maximum sub-trajectory length $\ell_{max} > 2$, used to construct the training matrices $\bS^{train}_h,\bM^{train}$. At each iteration, a trajectory length is then sampled $\ell\sim \mathcal{U}[2,\ell_{max}]$, allowing the selection of a batch with sub-trajectory length $\ell$ as $\bS^{batch}_{h,0:\ell}, \ \bM^{batch}_{0:\ell}$.
\end{remark}

\begin{algorithm}[h]
\caption{$\dldm_\theta$ testing algorithm}
\begin{algorithmic}[1]
\Require Initial snapshot matrix $\bS_{0,h}^{test}\in\mathbb{R}^{N_h\times N_\mu}$, parameter matrix $\bM^{test}\in\mathbb{R}^{(n_\mu+1)\times N_s}$,
optimal model parameters $\btheta^*= (\btheta_{\Psi}^*, \btheta_{\Psi'}^*, \btheta_{\bff_n}^*)$, Runge-Kutta scheme RK, perturbation $\bdelta_h\in\mathbb{R}^{N_h}$ 
\Ensure  Approximation $\hat{S}^{test}_h\in\mathbb{R}^{N_h\times N_s}$
\State Normalize $\bS_{0,h}^{test}$ accordingly to \eqref{eq:scale}
\If{$\bdelta_h \neq \mathbf{0}$}
\State Perturb initial values $\bS_{0,h}^{test} \leftarrow \bS_{0,h}^{test} + \bdelta_h$
\EndIf
\State Extract timesteps and parameters $(\mathbf{t}^{test}, \bmu^{test})\leftarrow \bM^{test}$
\State $\bS_{0,n} = \Psi_\theta(\bS_{0,h}^{test};\btheta_{\Psi}^*)$
\State $\bS_{n} = \text{RK}(\bff_{n,\theta}, \bS_{0,n}, \mathbf{t}^{test}, \bmu^{test}; \btheta_{\bff_n}^*)$
\State $\hat{\bS}_h^{test} = \Psi'_\theta(\bS_n;\btheta_{\Psi'}^*)$
\State Inversely normalize $\hat{\bS}_h^{test}$ accordingly to $\neg$\eqref{eq:scale}
\end{algorithmic}
\label{alg:testing}
\end{algorithm}

Additionally, the testing Algorithm \ref{alg:testing} includes the option to perturb the initial datum (line 3), to test LDMs' zero-stability via a suitable perturbation $\bdelta_h$. 

\section{Numerical results}
This section is concerned with the evaluation of the performance of the proposed framework in the context of reduced order modeling of parameterized dynamical systems, i.e., FOMs of the form \eqref{eq:FOM}, arising from the semi-discretization of parameterized nonlinear time-dependent PDEs. The goal of the following numerical tests is to assess LDMs' capabilities in {\em (i)} accurately handling time evolution, {\em (ii)} satisfying the time-continuous approximation property, {\em (iii)} ensuring zero-stability in a learnable context, and {\em (iv)} effectively capturing the parametric dependence in a multi-query context.

\subsection{Problems and experimental setup}
The considered problems on which  empirical tests are performed are {\em (i)} a one-dimensional Burger's equation, and {\em (ii)} a two-dimensional advection-diffusion-reaction (ADR) equation. The specific setup of each problem is described below.

\paragraph{1D Burgers' equation.} As a first problem, the one-dimensional Burgers' equation is considered, reading as
\begin{equation}
    \begin{cases}
    u_t - \nu u_{xx} + uu_x = 0  & \quad (x,t)\in\Omega \times (0,T],\\
    u_x(x,t) = 0 & \quad (x,t) \in \partial \Omega \times (0,T],\\
    u(x,0) = e^{-x^2} & \quad  x \in \Omega.
    \end{cases}
\end{equation}
Here $\nu \in \mathbb{R}$ denotes the viscosity coefficient, varying in the interval $\nu \in \mathcal{P} = [5\cdot 10^{-3},1]$, having in this case $\dim\mathcal{P} = n_\mu = 1$. Regarding the parameter space splitting and discretization, we consider $\mathcal{P} = [5\cdot 10^{-3}, 5\cdot 10^{-2}] \cup (5\cdot 10^{-2}, 1]$, where the former interval is used for training and interpolation testing purposes, while the latter is reserved for extrapolation testing. In particular, the first interval is discretized into $N_\mu^{train}=100$ equally spaced instances, forming the training set $\mathcal{P}_{train}$, where 20\% of these instances are kept for validation. For testing purposes with respect to the parametric dependence, two discrete sets are constructed. $\mathcal{P}_{interp}$ refers to the interpolation parameter instances, obtained by taking the midpoints of the training instances in $\mathcal{P}_{train}$, resulting in $N_\mu^{interp} = 99$ data points. Considering the extrapolation testing instances, $\mathcal{P}_{extrap}$ is constructed by considering $N_\mu^{extrap} = 50$ equally spaced points over $(5\cdot 10^{-2}, 1]$. The FOM is built by employing linear ($\mathbb{P}_1$) finite elements, with a spatial grid characterized by $N_h=1024$ DoFs over $\Omega = (-10,10)$. The system has been solved in time via implicit differentiation of order 1, with $N_t=1000$ time steps over the interval $[0,T]$ with $T=30$, resulting in a time step $\Dt_{FOM} = 0.03$. A temporal splitting of the form $[0,T_1]\cup(T_1,T_2]\cup(T_2,T]$ is adopted, where the first two intervals refer to the training and validation partitions, while the latter refers to the time-extrapolation interval. Specifically, $T_1=12, \ T_2=15$ are chosen as the endpoints of the training and validation intervals, respectively.

\paragraph{2D Advection-diffusion-reaction equation.}
As a second test case, we consider a time-dependent parameterized advection-reaction-diffusion problem on a two-dimensional domain $\Omega = (0,1)^2$, defined as follows
\begin{equation}
    \begin{cases}
    u_t - \nabla \cdot (\mu_1 \nabla u) + \bb(t)\cdot \nabla u + cu = f(\bx; \mu_2,\mu_3) & \quad (\bx,t)\in\Omega \times (0,T],\\
    \mu_1 \nabla u \cdot \bn = 0 & \quad (\bx,t) \in \partial \Omega \times (0,T], \\
    u(\bx,0) = 0 &  \quad \bx \in \Omega,
    \end{cases}
\end{equation}
with
\begin{equation*}
f(\bx; \mu_2,\mu_3) = 10\cdot\exp(-((x-\mu_2)^2+(y-\mu_3)^2)/0.07^2),\qquad\bb(t)=[\cos(t), \ \sin(t)]^T, \qquad c=1.
\end{equation*}
In this case $n_\mu=3$, so that the parameters left varying are $\bmu=(\mu_1,\mu_2,\mu_3)$, where the former refers to the diffusive effect, while the latter two set the position of the forcing term, thus acting on a geometrical aspect of the problem. In particular, the nonlinear dependence of the solution on $\mu_2$ and $\mu_3$ categorizes the problem as nonaffinely parameterized, posing a challenge for traditional projection-based techniques \cite{dal2019algebraic}.
The adopted parameters space is $\mathcal{P} = [1\cdot 10^{-2},6\cdot 10^{-2}]\times[0.3,0.7]^2$. Specifically, considering $[2\cdot 10^{-2},5\cdot 10^{-2}]\times[0.4,0.6]^2 \subset \mathcal{P}$, 10 uniformly spaced instances are taken over each interval, resulting in $N_\mu^{train}=1000$ training parameters instances collected into $\mathcal{P}_{train}$, with 20\% of these instances reserved for validation. As in the previous test case, the midpoints of the training set instances are collected into $\mathcal{P}_{interp}$, leading to $N_\mu^{interp}=729$ interpolation testing instances. The model's extrapolation capabilities are tested on $N_\mu^{extrap}=400$ instances uniformly sampled from $\mathcal{P} \setminus [2\cdot 10^{-2},5\cdot 10^{-2}]\times[0.4,0.6]^2$, forming  $\mathcal{P}_{extrap}$. The FOM is solved through a linear ($\mathbb{P}_1$) finite element discretization in space, and considering 32 nodes on each side of $\Omega$, resulting in $N_h=1024$ DoFs.  The system has been solved in time via implicit differentiation of order 1, with $N_t=1000$ time steps over the interval $[0,T]$ with $T=10\pi$, thus with a time step $\Dt_{FOM} \simeq 0.03$. A temporal splitting of the form $[0,T_1]\cup(T_1,T_2]\cup(T_2,T]$ is again adopted, choosing $T_1=\frac{2}{5}T, \ T_2=\frac{1}{2}T$ as the endpoints of the training and validation intervals, respectively.

\paragraph{Experimental setup.}
The architecture of the models employed in the proposed numerical experiments follows the structure given in Section \ref{section:DLLDM}, characterized by a convolutional autoencoder and a convolutional parameterized neural ODE, equipped with the proposed affine modulation parameterization strategy. The training procedure outlined in Algorithm 1 is followed, adopting sub-trajectories of maximum sequence length $\ell_{max}=40$, together with the \textit{temporal regularization} method outlined in Remark \ref{rmk:temporal_reg}. Adam optimizer \cite{kingma2017adam} is employed with a decaying learning rate schedule starting from $\eta = 5\cdot 10^{-4}$, and weight decay $\lambda = 10^{-5}$. 
The models' implementation has been performed using the PyTorch framework. Specifically, the ODE integration numerical routines have been implemented from scratch, rather than relying on external libraries, in order to deal with parameter-dependent latent dynamics, and leverage just-in-time (JIT) compilation for improved training and inference performance.

For the purpose of evaluating LDMs' performance, the following error indicators are introduced: 
\begin{enumerate}[(i)]
\item a scalar indicator $\epsilon_{rel}(\bu_h, \hbu_h)\in\mathbb{R}$ representing the relative error averaged over $N_\mu$ parameters' instances and $N_t$ temporal steps\footnote{Here, $N_t$ may be replaced by an arbitrary sub-trajectory length $\ell\leq N_t$, such as in the case of short-term predictions.} ($N_s = N_t N_\mu$), reading as
\begin{equation}
    \epsilon_{rel}(\bu_h, \hbu_h) = \frac{1}{N_s}\sum_{j=1}^{N_\mu} \sum_{i=0}^{N_t-1} \frac{\|\bu_h(t_i;\bmu^j)-\hbu_h(t_i;\bmu^j)\|}{\|\bu_h(t_i;\bmu^j)\|},
\label{eq:rel_err_avg}
\end{equation}
measuring the average performances over the time-parameter space;
\item a parameter-dependent scalar indicator $\epsilon_{rel}(\bmu;\bu_h, \hbu_h)\in\mathbb{R}$, which quantifies the relative error averaged over $N_t$ temporal steps, for a given parameter instance $\bmu$, reading as
\begin{equation}
    \epsilon_{rel}(\bmu;\bu_h, \hbu_h) =  \frac{1}{N_t}\sum_{i=0}^{N_t-1} \frac{\|\bu_h(t_i;\bmu)-\hbu_h(t_i;\bmu)\|}{\|\bu_h(t_i;\bmu)\|},
\label{eq:rel_err_mu}
\end{equation}
used to assess the accuracy over the parameter space.
\item a time-dependent scalar indicator $\epsilon_{rel}(t;\bu_h, \hbu_h,\bmu)\in\mathbb{R}$, representing the pointwise-in-time relative error for a fixed instance of the testing parameter $\bmu$, defined as
\begin{equation}
    \epsilon_{rel}(t; \bu_h, \hbu_h, \bmu) =  \frac{\|\bu_h(t;\bmu)-\hbu_h(t;\bmu)\|}{\|\bu_h(t;\bmu)\|},
\label{eq:rel_err_t}
\end{equation}
\item a time-dependent vector indicator $\beps_{rel}(t; \bu_h, \hbu_h, \bmu)\in\mathbb{R}^{N_h}$ providing the pointwise-in-time relative error for a fixed instance of the testing parameter $\bmu$, defined as
\begin{equation}
    \beps_{rel}(t; \bu_h, \hbu_h, \bmu) =  \frac{|\bu_h(t;\bmu)-\hbu_h(t;\bmu)|}{\|\bu_h(t;\bmu)\|}.
\label{eq:rel_err_t_vec}
\end{equation}
\end{enumerate}
To maintain consistency with the previously introduced relative error measures, the error indicator used to assess the time-continuous approximation property is the supremum of \eqref{eq:rel_err_t} over the employed time-grid $\{t_k\}_{k=0}^{N_t-1}$, namely
\begin{equation}
    \epsilon_{rel}^{sup}(\bu_h, \hbu_h, \bmu) =  \sup_{k\in\{0,...,N_t-1\}} \epsilon_{rel}(t_k; \bu_h, \hbu_h, \bmu).
\label{eq:rel_err_sup}
\end{equation}

\subsection{Computational experiments}

The following experiments aim to assess the properties and generalization capabilities of the proposed framework, with a focus on four key modeling aspects: {\em (i)} temporal evolution, {\em (ii)} time-continuity, {\em (iii)} zero-stability and {\em (iv)}
parameter dependence. Although these aspects are addressed separately in the following subsections, they are closely connected throughout the different experiments. Indeed, being in a multi-query context, the performances with respect to varying parameter instances are always assessed, and, exploiting the time-continuous nature of the framework, finer testing temporal discretizations than the one employed at training-time are employed.

Regarding the models' architectural details, a latent dimension of $n=16$ is used throughout the experiments, resulting in a dimensionality reduction by a factor of 64, being $N_h=1024$ in both benchmark problems. Second- and fourth-order Ralston's Runge-Kutta (RK) schemes \cite{Ralston1962}  are adopted for the numerical solution of the parameterized latent NODE. Coarser training temporal discretizations, compared to the ones used for solving the FOM, are employed. In particular, denoting the training time step by $\Dt_{train}$, for the first benchmark problem $\Dt_{train} = 2\Dt_{FOM}$, while for the second one $\Dt_{train} = 5\Dt_{FOM}$. Moreover, the LDMs used in the following studies are characterized by a parameter count of at most $\sim$200K, significantly smaller than that of many overparameterized DL-based ROMs.

\subsubsection{Temporal evolution}
Being primarily a temporal modeling scheme based on latent dynamics learning, the proposed framework is tested to assess the temporal evolution of both the high-dimensional approximation provided by the LDM, and the underlying evolution of the latent state. Specifically, two types of tests are performed: the first evaluates the \textit{short-term} predictive capabilities of LDMs, while the second assesses the \textit{long-term} predictive performance. We remark that both tests are conducted on the FOM temporal discretizations, thus being $\Dt_{test} = \Dt_{train}/2$ in the Burgers' test case, and $\Dt_{test} = \Dt_{train}/5$ in the ADR test case.
In the following, RK2 is employed for integrating the latent dynamics, with parameters encoding of size $k=16$ and $k=8$ employed in the Burgers' and ADR test cases, respectively.

\begin{table}[h]
\center
\renewcommand{\arraystretch}{1.7}
\resizebox{\textwidth}{!}{
\begin{tabular}{@{}lllllllllll@{}}
\toprule
 & \multicolumn{1}{c}{} & \multicolumn{3}{c}{$0<t_0<T_1$} & \multicolumn{3}{c}{$T_1<t_0<T_2$} & \multicolumn{3}{c}{$T_2<t_0<T$} \\ \midrule
 & \multicolumn{1}{c}{$\ell_{test} = $} & \multicolumn{1}{c}{20} & \multicolumn{1}{c}{40} & \multicolumn{1}{c}{80} & \multicolumn{1}{c}{20} & \multicolumn{1}{c}{40} & \multicolumn{1}{c}{80} & \multicolumn{1}{c}{20} & \multicolumn{1}{c}{40} & \multicolumn{1}{c}{80} \\ \midrule
\multirow{3}{*}{\rotatebox[origin=c]{90}{Burgers}} & \textbf{$\mathcal{P}_{train}$} & $1.23\cdot 10^{-3}$ & $1.14\cdot 10^{-3}$ & $1.08\cdot 10^{-3}$ & $5.28\cdot 10^{-3}$ & $5.21\cdot 10^{-3}$ & $5.28\cdot 10^{-3}$ & $1.67\cdot 10^{-2}$ & $1.65\cdot 10^{-2}$ & $1.75\cdot 10^{-2}$ \\
 & $\mathcal{P}_{interp}$ & $1.24\cdot 10^{-3}$ & $1.10\cdot 10^{-3}$ & $1.06\cdot 10^{-3}$ & $5.11\cdot 10^{-3}$ & $4.94\cdot 10^{-3}$ & $5.08\cdot 10^{-3}$ & $1.60\cdot 10^{-2}$ & $1.60\cdot 10^{-2}$ & $1.72\cdot 10^{-2}$ \\
 & $\mathcal{P}_{extrap}$ & $5.32\cdot 10^{-3}$ & $6.77\cdot 10^{-3}$ & $9.41\cdot 10^{-3}$ & $1.26\cdot 10^{-2}$ & $1.33\cdot 10^{-2}$ & $1.53\cdot 10^{-2}$ & $1.84\cdot 10^{-2}$ & $2.02\cdot 10^{-2}$ & $2.40\cdot 10^{-2}$ \\ \midrule
\multirow{3}{*}{\rotatebox[origin=c]{90}{ADR}} & \textbf{$\mathcal{P}_{train}$} & $5.75\cdot 10^{-3}$ & $5.56\cdot 10^{-3}$ & $5.34\cdot 10^{-3}$ & $9.23\cdot 10^{-3}$ & $1.16\cdot 10^{-2}$ & $1.41\cdot 10^{-2}$ & $1.44\cdot 10^{-2}$ & $2.35\cdot 10^{-2}$ & $3.19\cdot 10^{-2}$ \\
 & \textbf{$\mathcal{P}_{interp}$} & $5.35\cdot 10^{-3}$ & $5.38\cdot 10^{-3}$ & $5.40\cdot 10^{-3}$ & $8.82\cdot 10^{-3}$ & $1.14\cdot 10^{-2}$ & $1.38\cdot 10^{-2}$ & $1.50\cdot 10^{-2}$ & $2.38\cdot 10^{-2}$ & $3.25\cdot 10^{-2}$ \\
 & \textbf{$\mathcal{P}_{extrap}$} & $6.20\cdot 10^{-2}$ & $5.98\cdot 10^{-2}$ & $6.28\cdot 10^{-2}$  & $5.99\cdot 10^{-2}$ & $6.36\cdot 10^{-2}$ & $7.03\cdot 10^{-2}$ & $6.55\cdot 10^{-2}$ & $6.99\cdot 10^{-3}$  & $8.01\cdot 10^{-2}$ \\ \bottomrule
\end{tabular}
}
\caption{\textit{Short-term predictions.} Relative error $\epsilon_{rel}(\bu_h, \hbu_h)$ for short-term predictions on sub-trajectories of test length $\ell_{test}$ being $\frac{1}{2}\times, 1\times, 2\times$ the maximum training sub-trajectory length $\ell_{max}=40$. The tests are performed by sampling sub-trajectories from the train $[0,T_1]$, validation $(T_1,T_2]$ and extrapolation $(T_2,T]$ temporal intervals. Similarly, the rows correspond to the train ($\mathcal{P}_{train}$), interpolation ($\mathcal{P}_{interp}$) and extrapolation ($\mathcal{P}_{extrap}$) parameter instances.}
\label{tb:short-term}
\end{table}

To test short-term predictive capabilities, sub-trajectories of comparable length to the ones employed at training time are sampled from the FOM trajectories. In particular, the employed testing lengths are $\ell=20,40,80$, being $\frac{1}{2}\times, 1\times, 2\times$ the maximum training sub-trajectory length $\ell_{max}=40$ used for LDMs' training in both Burgers' and ADR test cases. The testing sub-trajectories are extracted from the different partitions of the temporal interval $[0,T]$, namely the training split $[0,T_1]$, the validation one $(T_1,T_2]$, and the extrapolation interval $(T_2,T]$.
In such setting, the extracted FOM sub-trajectory is denoted as $\{\bu_h(t_k;\bmu)\}_{k=0}^{\ell-1}$, with $\{t_k\}_{k=0}^{\ell-1}$ being a collection of time-instances belonging to one of the temporal splits.
Leveraging LDM's IVP-structure, $\bu_h(t_0;\bmu)$ is provided as input to the model, along with the temporal steps $\{t_k\}_{k=0}^{\ell-1}$ and the parameters $\bmu$, to produce the approximation $\{\hbu_h(t_k;\bmu)\}_{k=0}^{\ell-1}$. The sub-trajectories are sampled for different parameter instances $\bmu$, belonging to either the training set $\mathcal{P}_{train}$, interpolation set $\mathcal{P}_{interp}$ or extrapolation set $\mathcal{P}_{extrap}$.
The results, expressed in terms of the relative error indicator $\epsilon_{rel}(\bu_h, \hbu_h)$ (Eq. \eqref{eq:rel_err_avg}), are collected in Table \ref{tb:short-term}.
In particular, it can be observed that the LDM is able to provide accurate approximations in both the Burgers' equation and ADR equation test cases, effectively generalizing in time ($t>T_2$), and when performing extrapolation with respect to the parameters ($\bmu\in\mathcal{P}_{extrap}$), with errors on the order of $10^{-2}$, at most. In particular, the same error levels are maintained across different sub-trajectories lengths, highlighting LDMs' capability to handle short-term predictions of comparable length to the training sub-trajectories.

\begin{table}[h]
\center
\footnotesize
\renewcommand{\arraystretch}{1.7}
\resizebox{\textwidth}{!}{
\begin{tabular}{cccccclccc}
 &  &  &  &  &  &  & \multicolumn{2}{c}{CPU Time (s)} &  \\ \cline{1-6} \cline{8-10} 
 &  & $0<t<T_1$ & $T_1<t<T_2$ & $T_2<t<T$ & $[0,T]$ &  & FOM & LDM & Speed-up \\ \cline{1-6} \cline{8-10} 
\multirow{3}{*}{\rotatebox[origin=c]{90}{Burgers}} & $\mathcal{P}_{train}$ & $3.32\cdot 10^{-3}$ & $7.66\cdot 10^{-3}$ & $2.95\cdot 10^{-2}$ & $1.68\cdot 10^{-2}$ &  & 4.90 & $9.13\cdot 10^{-2}$ &  \\
 & $\mathcal{P}_{interp}$ & $3.17\cdot 10^{-3}$ & $7.16\cdot 10^{-3}$ & $2.89\cdot 10^{-2}$ & $1.65\cdot 10^{-2}$ &  & 4.65 & $7.84\cdot 10^{-2}$ & $\sim 40\times$ \\
 & $\mathcal{P}_{extrap}$ & $2.67\cdot 10^{-2}$ & $4.01\cdot 10^{-2}$ & $6.14\cdot 10^{-2}$ & $4.55\cdot 10^{-2}$ &  & 4.41 & $9.94\cdot 10^{-2}$ &  \\ \cline{1-6} \cline{8-10} 
\multirow{3}{*}{\rotatebox[origin=c]{90}{ADR}} & $\mathcal{P}_{train}$ & $6.21\cdot 10^{-3}$ & $1.31\cdot 10^{-2}$ & $4.75\cdot 10^{-2}$ & $2.73\cdot 10^{-2}$ &  & 9.25 & $3.74\cdot 10^{-2}$ &  \\
 & $\mathcal{P}_{interp}$ & $5.94\cdot 10^{-3}$ & $1.27\cdot 10^{-2}$ & $4.67\cdot 10^{-2}$ & $2.67\cdot 10^{-2}$ &  & 9.31 & $3.67\cdot 10^{-2}$ & $\sim 200\times$ \\
 & $\mathcal{P}_{extrap}$ & $7.73\cdot 10^{-2}$ & $8.74\cdot 10^{-2}$ & $1.16\cdot 10^{-1}$ & $9.81\cdot 10^{-2}$ &  & 9.84 & $3.68\cdot 10^{-2}$ &  \\ \cline{1-6} \cline{8-10} 
\end{tabular}
}
\caption{\textit{Long-term predictions and CPU times.} Relative error $\epsilon_{rel}(\bu_h, \hbu_h)$ for long-term predictions, obtained by providing $\bu_{0,h}$ at $t_0=0$, and integrating the LDM's latent dynamics over the entire temporal interval $[0,T]$. The first three columns show the error indicator values over the train, validation and extrapolation temporal splits, respectively, while the fourth column reports the relative error over the full interval. On the right, a comparison between FOM's and LDM's computational times is provided.}
\label{tb:long-term}
\end{table}

Long-term predictive capabilities are tested by considering the whole FOM trajectories on the complete interval $[0,T]$, resulting in the discretized sequence $\{\bu_h(t_k;\bmu)\}_{k=0}^{N_t-1}$. Thus, providing the initial value to the LDM, along with the time-grid and the parametric input, the latent dynamics is solved over $N_t$ steps, which, given the FOM discretizations in both test cases, accounts for a roll-out of size $N_t=1000$. Similarly to the previous case, long-term predictions tests are carried out in a multi-query context by considering training, interpolation and extrapolation parameter instances. The results are reported in Table \ref{tb:long-term}, again expressed in terms of the relative error indicator $\epsilon_{rel}(\bu_h, \hbu_h)$. In particular, considering the error averaged over the whole temporal interval $[0,T]$, it can be observed that the LDM's approximation error is below 10\%, even in the worst case.
Finally, in Table \ref{tb:long-term} (right), we report the CPU speed-ups achieved by the LDM framework compared to the FOM, for both test cases.

\begin{figure}[!t]
    \center
    \includegraphics[width=.95\textwidth]{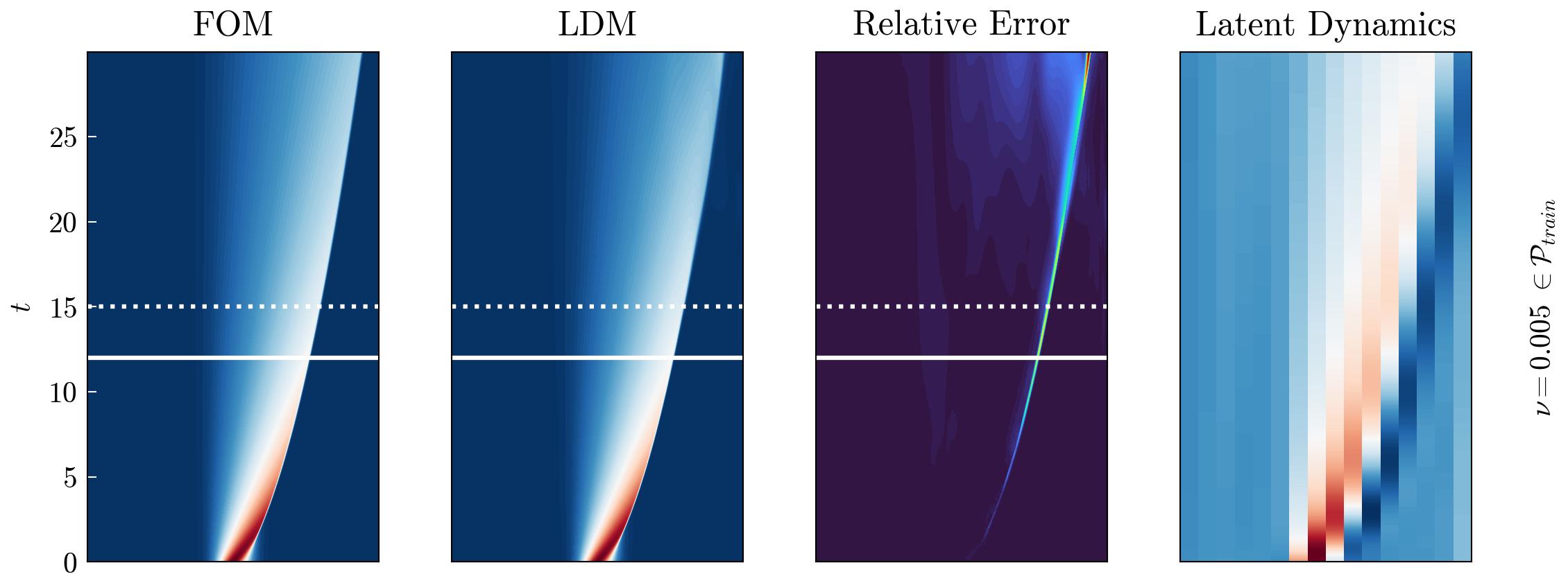}
    \includegraphics[width=.95\textwidth]{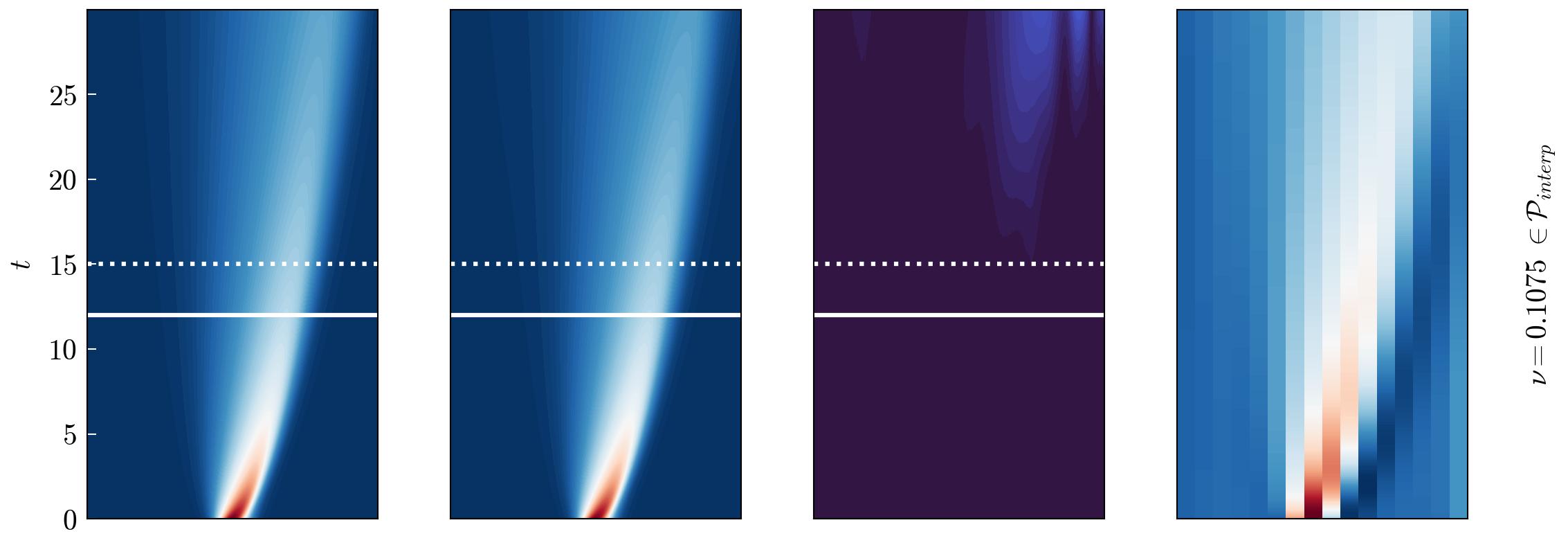}
    \includegraphics[width=.95\textwidth]{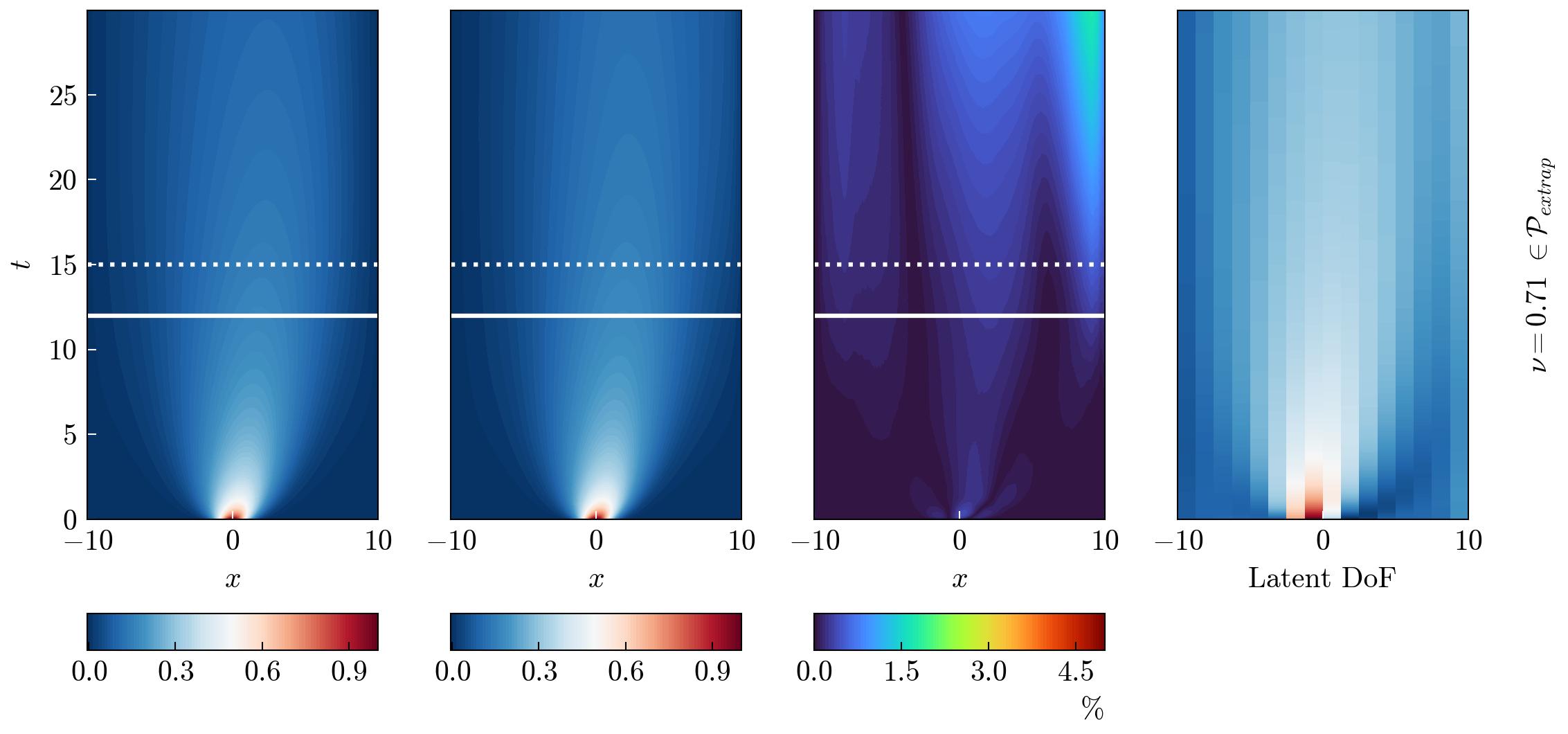}
    \caption{\textit{Burgers' equation.} Evolution of the FOM solution (left), LDM solution (center left), relative error $\beps_{rel}$ (center right), and of the latent state (right). The solutions are computed for three different values of the diffusivity parameter $\nu$, respectively belonging to the training, interpolation and extrapolation sets. 
    The horizontal white lines denote the end of the temporal training (solid) and validation (dotted) intervals, after which time-extrapolation is performed.}
    \label{fig:burgers_solution}
\end{figure}
In Figure \ref{fig:burgers_solution}, we report the comparison between the FOM and LDM Burgers' equation solutions, for three different instances of the diffusivity parameter $\nu = 0.005$, $\nu = 0.1075$ and $\nu = 0.71$, respectively belonging to the training, interpolation and extrapolation sets. Being a long-term roll-out, the three solutions are computed over the complete temporal interval $[0,30]$ with $N_t=1000$. Specifically, the white horizontal lines highlight the end of the training temporal interval (solid) and of the validation interval (dotted), thus performing time-extrapolation for $t>T_2 = 15$. The third column of plots displays the relative error $\beps_{rel}(t; \bu_h, \hbu_h, \bmu)$ defined in Eq. \eqref{eq:rel_err_t_vec}. As observed, the highest spikes of the error occur towards the end of the time-extrapolation region, on the order of $10^{-2}$.
As illustrated on the right-most plot, showing the evolution of the latent state, the proposed fully-convolutional architecture effectively maintains spatial coherence between the high- and low-dimensional states. Indeed, the learned latent state evolution mirrors the high-dimensional evolution on a coarser grid, with a reduced dimension of $n=16$, thus enhancing the model's interpretability, via a latent representation which preserves the key spatial features.

\begin{figure}[!t]
    \center
    \includegraphics[width=\textwidth]{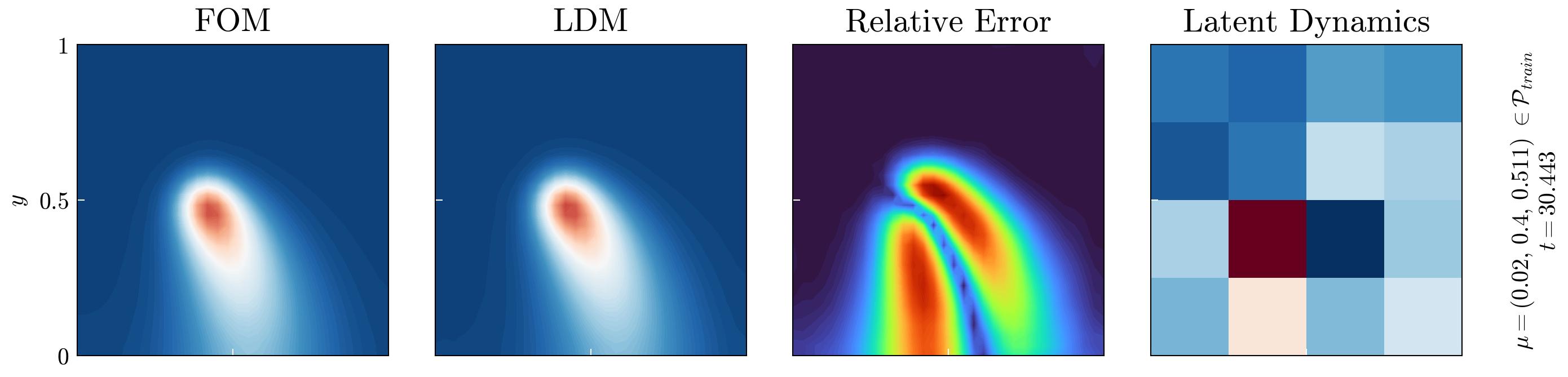}
    \includegraphics[width=\textwidth]{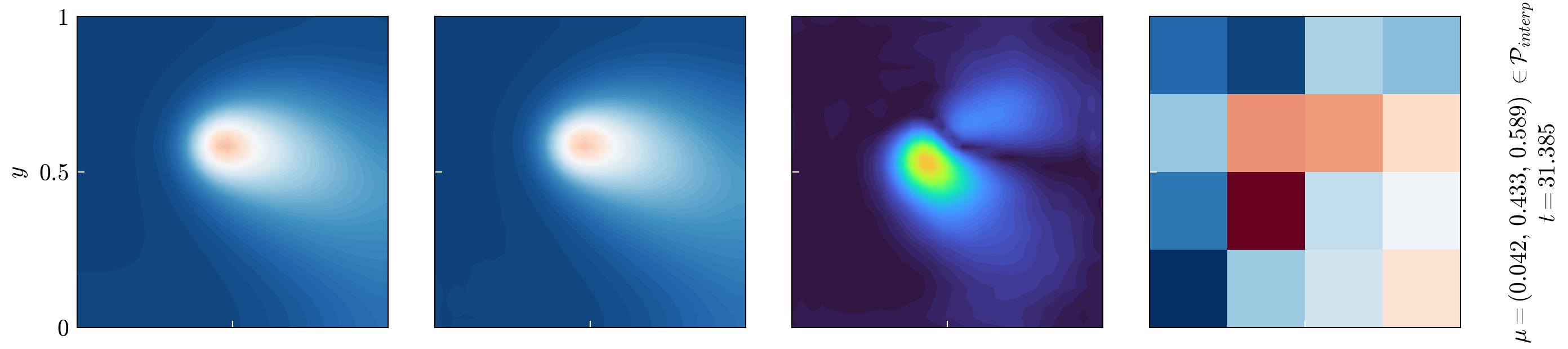}
    \includegraphics[width=\textwidth]{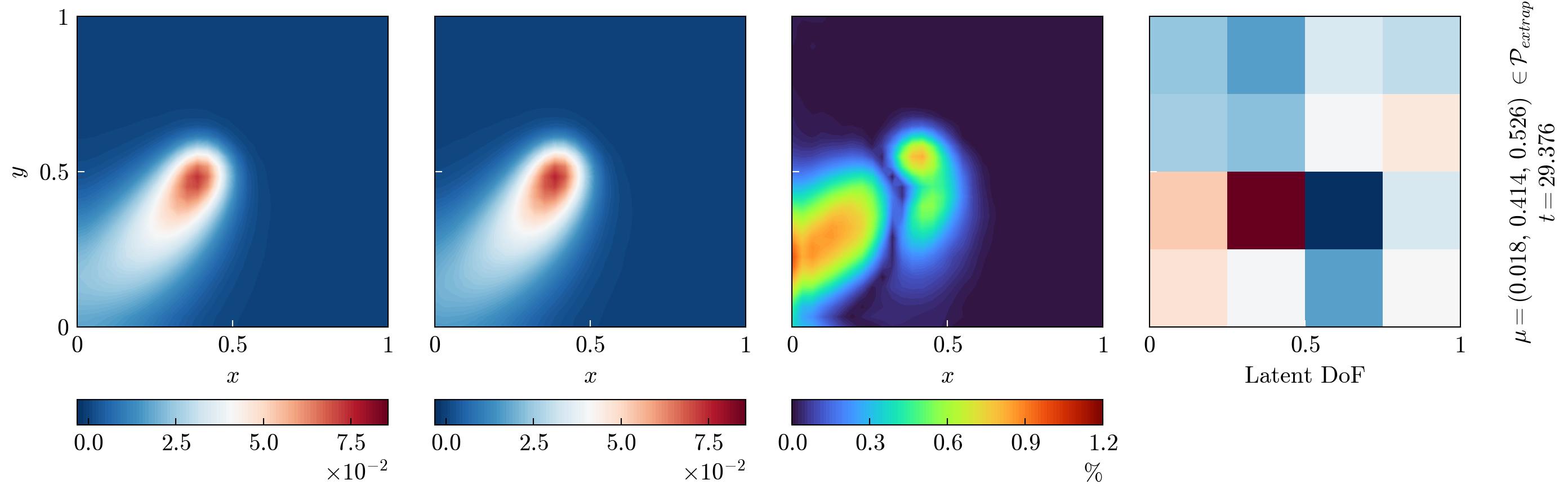}
    \caption{\textit{ADR equation.} 
    FOM solution (left), LDM solution (center left), relative error $\beps_{rel}$ (center right), and  latent state (right) for a fixed time instance $t$. The solutions are computed for three different instances of the parameter vector $\bmu$, respectively belonging to the training, interpolation and extrapolation sets. 
    All the solutions refer to the time-extrapolation interval, being $t>T_2$.}
    \label{fig:adr_solution}
\end{figure}

A comparison between ADR equation FOM and LDM solutions, for three testing parameter instances $\bmu = (0.02, 0.4, 0.511) \in \mathcal{P}_{train}$, $\bmu = (0.042, 0.433, 0.589)\in \mathcal{P}_{interp}$, $\bmu = (0.018, 0.414, 0.526)\in \mathcal{P}_{extrap}$, at the respective time instants $t = 30.433$, $t = 31.385$ and $t = 29.376$, all belonging to the time extrapolation interval ($t>T_2)$, are shown in Figure \ref{fig:adr_solution}. All three solutions refer to a $N_t = 1000$ steps roll-out on the complete temporal interval $[0,10\pi]$. 
Again, the relative error indicator $\beps_{rel}(t; \bu_h, \hbu_h, \bmu)$ is reported, showing values on the order of $10^{-2}$, mainly located in the region where the solution attains its maximum.
Despite being more subtle in this 2D case, the latent state, showed in the right-most plots, still retains visual features that directly correspond to the high-dimensional solution. Specifically, the latent state components with large magnitude (indicated by red color) align with the position of the tail of the high-dimensional solution, on a coarser grid with $n=16$.

\begin{figure}[!ht]
    \center
    \includegraphics[width=\textwidth]{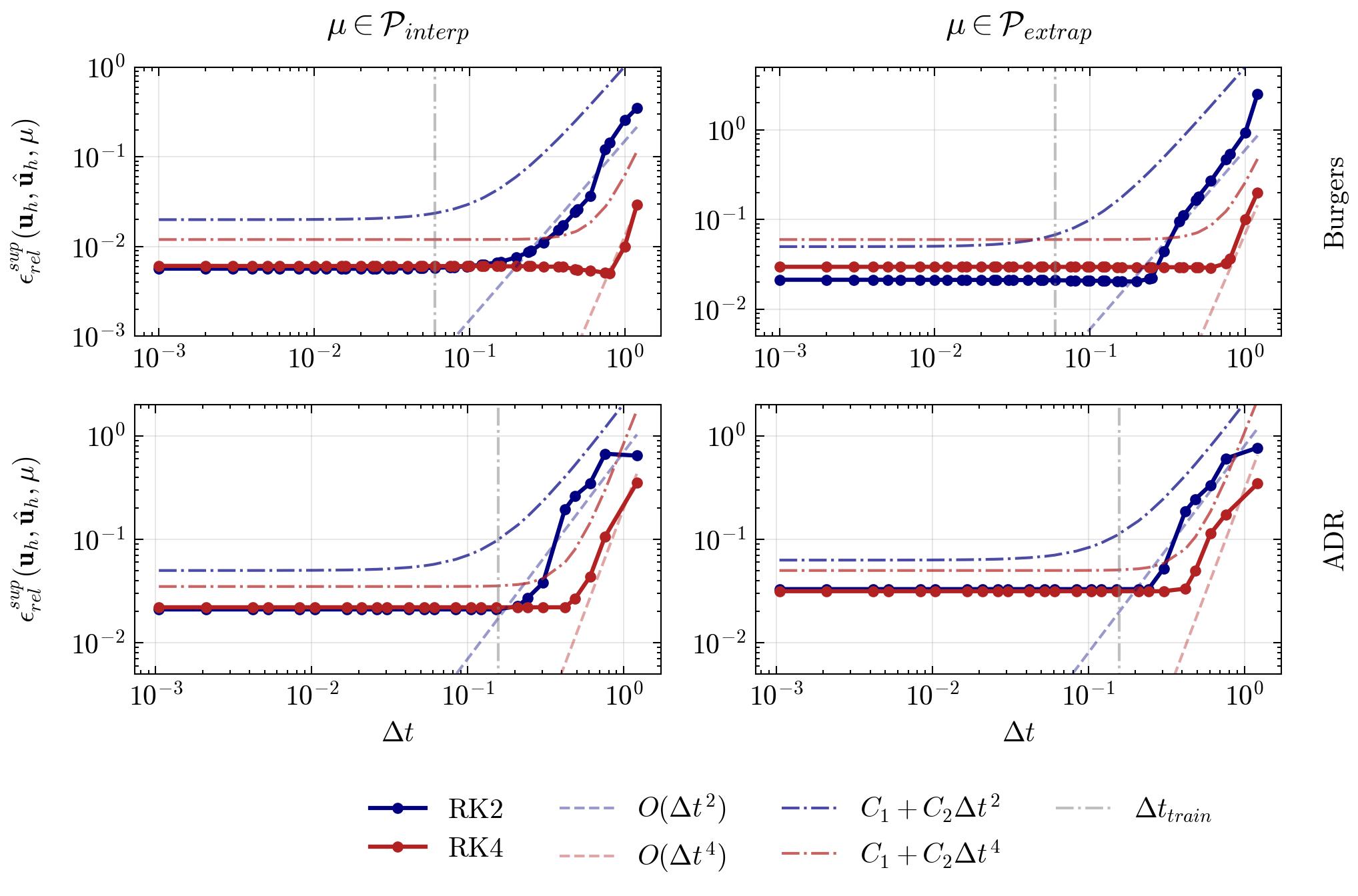}
    \caption{\textit{Time-continuous approximation.}  
    Assessment of the time-continuous approximation property for the two test cases: Burgers' equation (top), ADR equation (bottom). In both test cases, the property is evaluated using two integration schemes (RK2, RK4), and two different testing parameter instances (interpolation, extrapolation). Specifically, for the former test case, the adopted diffusivity coefficients are $\nu=0.2075$ and $\nu=0.61$. In the latter, $\bmu=(0.0283, 0.5, 0.4556)$ and $\bmu=(0.0337, 0.3754, 0.4767)$ are considered.}
    \label{fig:time_continuity}
\end{figure}

\subsubsection{Time-continuity}
In this section, the numerical validation of the ROM time-continuous approximation property (Def. \eqref{def:timecont}) is addressed. The tests are performed for both Burgers' equation and ADR equation test cases, demonstrating the effectiveness of the LDM framework in providing a time-continuous approximation.
Specifically, for each test case, we evaluate two models, with identical architectural details, but trained using different numerical methods for integrating the parameterized latent NODE, namely RK2 and RK4.
In assessing time-continuity of the approximation, we focus on two main aspects: (i) the ability of the LDM to demonstrate zero-shot super-resolution with respect to temporal resolution, meaning that it can provide the same level of accuracy over different (finer) temporal discretizations that were not encountered during training (being the LDM trained on a fixed temporal discretization with a step $\Dt_{train}$); and (ii) the validity of the upper bound \eqref{eq:error_decomposition}, arising from the error decomposition which accounts for the numerical error sources, within a learnable context.
To this mean, the relative error indicator $\epsilon_{rel}^{sup}(\bu_h, \hbu_h, \bmu)$, defined in Eq. \eqref{eq:rel_err_sup}, is used to measure the maximum error over the time instants of the employed testing temporal discretization. 
The tests involve finer discretizations of the time interval $[0,T]$ on which the FOM is computed, with a number of steps up to $N_t = 10^4$ and $N_t = 3\cdot 10^4$, for the Burgers' equation and ADR equation, respectively. 
The employed LDMs have been trained with time steps $\Dt_{train} = 2\Dt_{FOM}$ and $\Dt_{train} = 5\Dt_{FOM}$. Consequently, the tests are conducted on temporal discretizations that are up to $20\times$ and $150\times$ finer than those used during training.
Figure \ref{fig:time_continuity} illustrates the behavior of the error indicator $\epsilon_{rel}^{sup}$ with respect to the testing temporal discretization step $\Dt$. Specifically, the tests for both benchmark problems are reported, with the two columns referring to two different interpolation and extrapolation testing parameter instances.
In particular, the first row refers to the Burgers' equation test case, for the testing parameters instances $\nu=0.2075\in\mathcal{P}_{interp}$ and $\nu=0.61\in\mathcal{P}_{extrap}$. The second row refers to the ADR equation test case, for the testing parameter instances $\bmu=(0.0283, 0.5, 0.4556)\in\mathcal{P}_{interp}$ and $\bmu=(0.0337, 0.3754, 0.4767)\in\mathcal{P}_{extrap}$.
The training temporal discretization step $\Dt_{train}$ is indicated, and a constant behavior of the error indicator is observed in all the test cases for $\Dt\leq \Dt_{train}$.
It is evident that the resulting approximation provided by the LDM, in a discrete learnable context ($\dldm_\theta$), can be queried at each time instance while preserving the training accuracy, with the time-continuous property holding also for the extrapolation parameter instances.
Additionally, the validity of the upper bound, derived from the error decomposition formula \eqref{eq:error_decomposition}, can be observed, with the error behaving as $O(\Dt^p)$ for both $p=2$ and $p=4$, as the discretization becomes coarser. 

Interestingly, the employment of a higher-order scheme, such as RK4, does not necessarily lead to improvements in terms of accuracy, primarily due to the models' capacity limitations. Rather, the region for which the model shows time-continuous property extends towards the right compared to lower-order methods like RK2, likely related to the fact that RK4 has a larger $\mathcal{A}$-stability region.

\begin{figure}[!ht]
    \center
    \includegraphics[width=\textwidth]{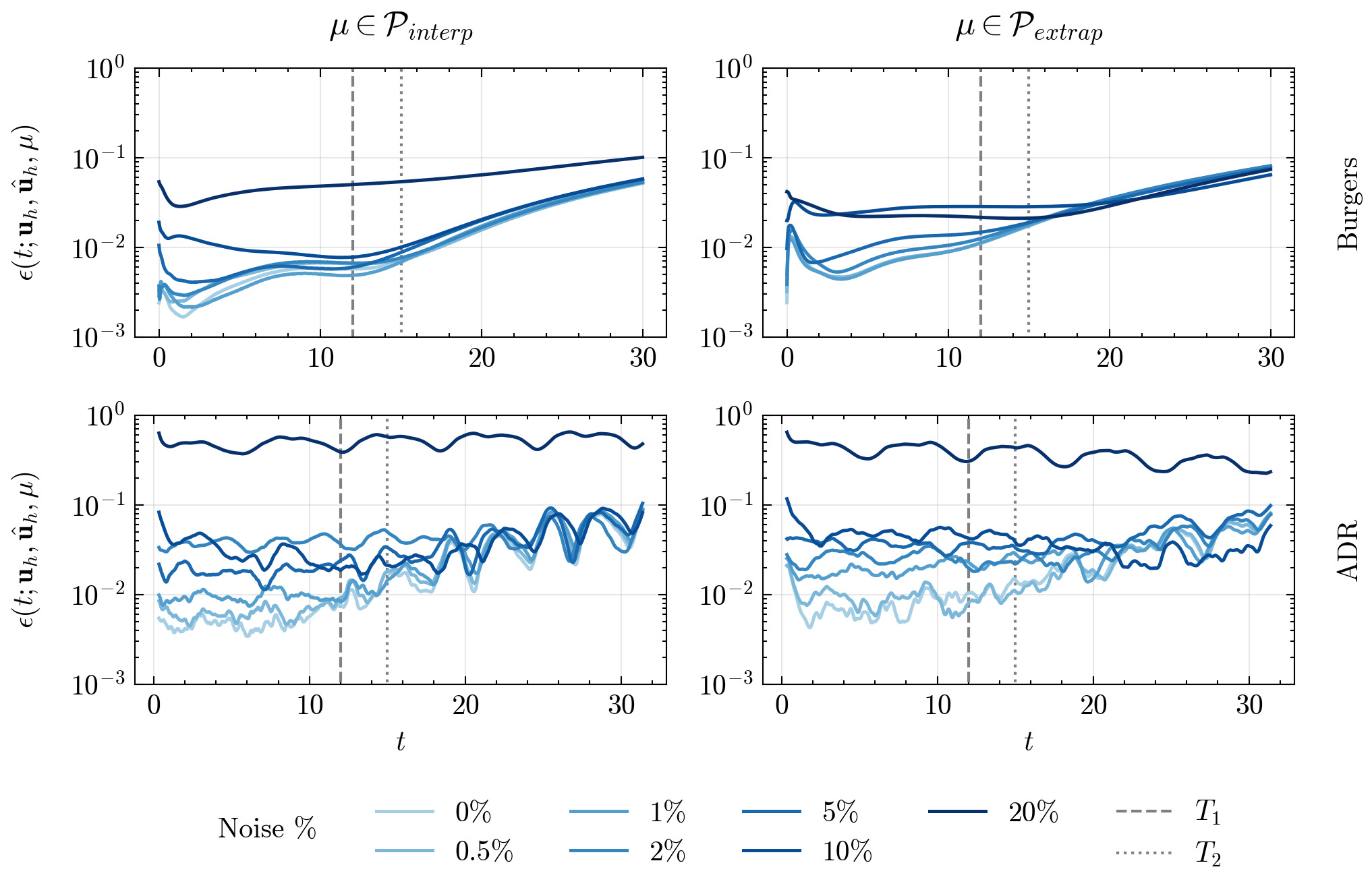}
    \caption{\textit{Zero-stability.} Test of the zero-stability property for multiple initial value perturbation of increasing magnitude in the two test cases: Burgers' equation (top), ADR equation (bottom). The tests are conducted for two testing parameter instances from the interpolation and extrapolation sets, respectively. In the first test case, $\nu=0.4775$ and $\nu=0.6$ are considered, while in the second $\bmu=(0.035, 0.43, 0.41)$ and $\bmu=(0.05278, 0.4557, 0.3927)$ are used.}
    \label{fig:zero_stability}
\end{figure}

\subsubsection{Zero-stability}
Here, we aim to assess whether the zero-stability property, derived for the $\dldm$ scheme in Section \ref{section:DLDM}, holds in a learnable context.
We test the framework's robustness with respect to initial value perturbations, by considering long-term predictions over the complete temporal interval $[0,T]$. Specifically, the experiment relies on the testing procedure outlined in Algorithm \ref{alg:testing}, with a randomly sampled Gaussian perturbation $\bdelta_h\sim\mathcal{N}(0,\sigma^2 I_{N_h})$, where the amount of noise on $\bu_{0,h}$ is progressively increased. 

The time-dependent relative error indicator $\epsilon_{rel}(t; \bu_h, \hbu_h, \bmu)$ (Eq. \eqref{eq:rel_err_t}) is employed for assessing the impact of the initial value perturbation, by benchmarking the perturbed LDM prediction directly against the FOM solution. In Figure \ref{fig:zero_stability}, the relative error evolution is reported, for varying levels of noise, going from 0\% up to 20\%. The first row refers to the Burgers' equation test case, for the testing parameters instances $\nu=0.4775\in\mathcal{P}_{interp}$ and $\nu=0.6\in\mathcal{P}_{extrap}$, over the time interval $[0,30]$. The second row illustrates the ADR equation test case, for the testing parameter instances $\bmu=(0.035, 0.43, 0.41)\in\mathcal{P}_{interp}$ and $\bmu=(0.05278, 0.4557, 0.3927)\in\mathcal{P}_{extrap}$, over the time interval $[0,10\pi]$. 
The error between the FOM and the perturbed LDM approximation demonstrates a bounded behavior, confirming the zero-stability property, even when performing parameter extrapolation.  Notably, when approaching the extrapolation interval ($t>T_2$), the errors for different levels of noise show a converging behavior, with an overall error magnitude of approximately $10^{-2}$. In both test cases, the error remains bounded even as the perturbation magnitude reaches the 20\% level, although it becomes significant.

\begin{figure}[!h]
    \center
    \includegraphics[width=.95\textwidth]{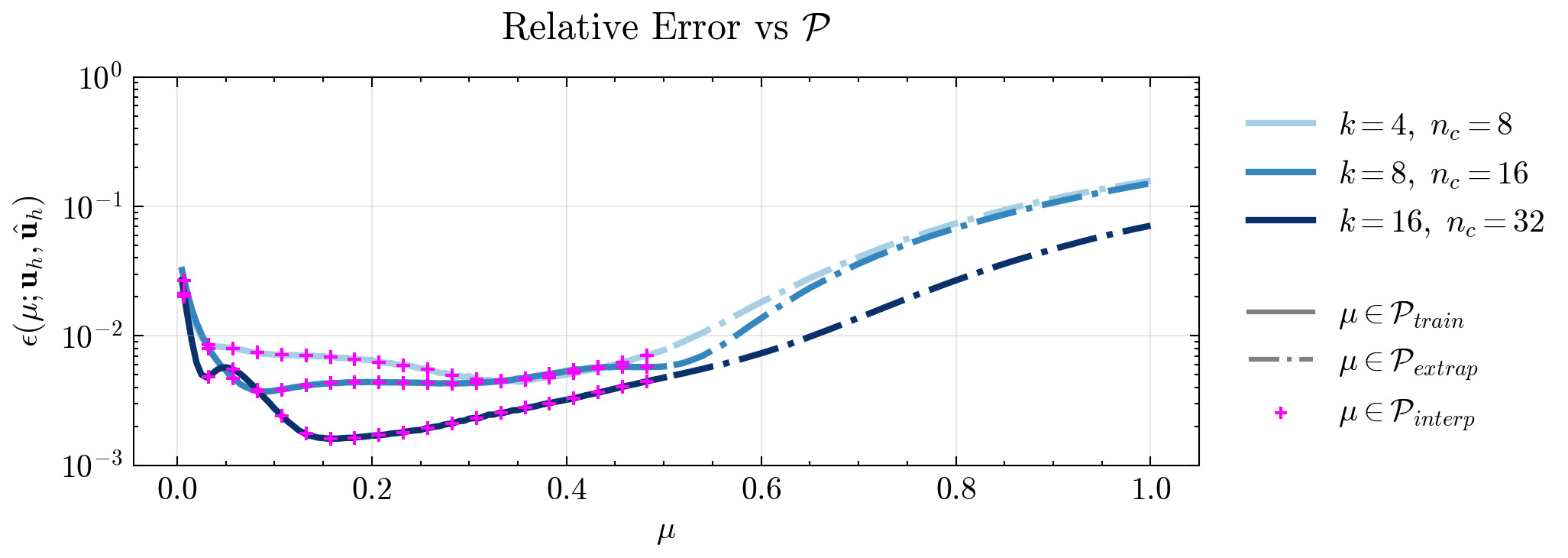}
    \caption{\textit{Burgers' equation.} Behavior of the relative error indicator $\epsilon_{rel}(\bmu;\bu_h, \hbu_h)$ over the parameter space $\mathcal{P}$. The three curves refer to different model configurations with increasing encoding dimensions ($k$) and growing number of channels ($n_c$) in the latent dynamics' $\bff_{n,\theta}$ modulated hidden layers. Continuous lines depict the error over the training instances ($\mathcal{P}_{train}$), while dash-dotted lines represent the error for the extrapolation instances ($\mathcal{P}_{extrap}$). Magenta crosses indicate a subset (for visualization purposes) of the interpolation instances ($\mathcal{P}_{interp}$).}
    \label{fig:burgers_params}
\end{figure}

\subsubsection{Parameter dependence}
Having introduced the LDM framework in a multi-query context, with a parameter-dependent formulation, we analyze the behavior of the LDM solution at extrapolation tasks with respect to the parameter input. 
We address two different aspects within the two benchmark problems. First, for Burgers' equation, we assess the impact of the sinusoidal encoding and the affine-modulation mechanism in the construction of the parameterized latent dynamics, proposed in Section \ref{sec:affine-param-latent-dynamics}. 

The influence of two main factors is studied: the encoding dimension $k$ and the capacity of the modulated convolutional NN employed within the latent dynamics. 
Our approach involves incrementally increasing the encoding size $k\in\{4,8,16\}$, alongside the number of affinely-modulated channels $n_c\in\{8,16,32\}$.
The results, presented in Figure \ref{fig:burgers_params}, illustrate how the parameter-dependent relative error $\epsilon_{rel}(\bmu;\bu_h, \hbu_h)$, defined in Eq. \eqref{eq:rel_err_mu}, varies with respect to the viscosity coefficient. As shown, the LDM employing a larger encoding dimension ($k=16$) and a higher number of modulated channels ($n_c=32$) achieves an error magnitude on the order $10^{-2}$ across the entire extrapolation interval $\nu \in (0.5,1]$, exhibiting a more gradual increase compared to other configurations.

\begin{figure}[!h]
    \center
    \includegraphics[width=\textwidth]{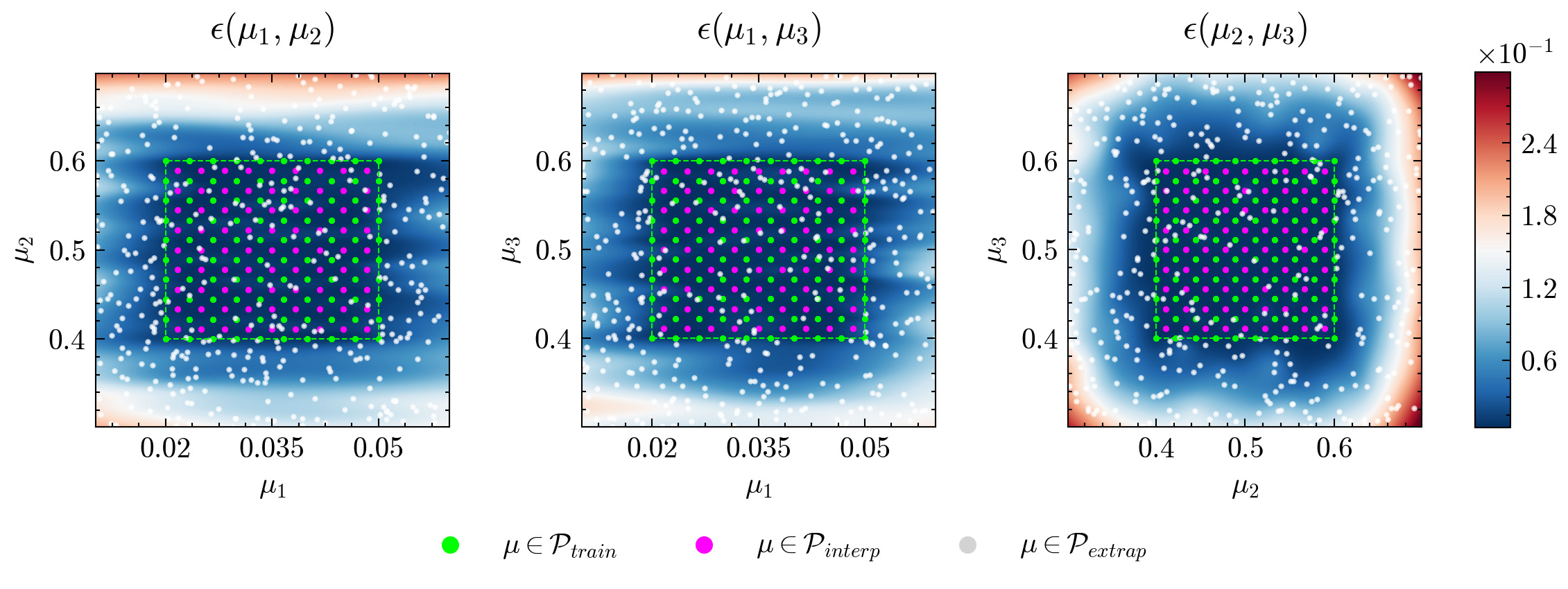}
    \caption{\textit{ADR equation.} Behavior of the relative error indicator $\epsilon_{rel}(\bmu;\bu_h, \hbu_h)$ with respect to the three parameters component $\bmu=(\mu_1,\mu_2,\mu_3)$ across the parameter space $\mathcal{P}$. Green points represent the training parameter instances ($\mathcal{P}_{train}$), while magenta points indicate interpolation testing instances ($\mathcal{P}_{interp}$). The white points correspond to extrapolation test instances ($\mathcal{P}_{extrap}$). Note that for pairs of parameters components $(\mu_i,\mu_j)$ falling within the training interval, the remaining component $\mu_k$ is within the extrapolation range, allowing for extrapolation testing across all components ($i,j,k \in \{1,2,3\}, \ i\neq j \neq k$).}
    \label{fig:adr_params}
\end{figure}

In the ADR test case, we focus on assessing the generalization capabilities for $n_\mu\geq2$. The error behavior with respect to the different parameter components $\bmu = (\mu_1, \mu_2, \mu_3)$ is illustrated in Figure \ref{fig:adr_params}. 
The plots highlight the training region (green) including the training instances ($\mathcal{P}_{train}$), the interpolation instances ($\mathcal{P}_{interp}$). Being $n_\mu = 3$, the extrapolation capabilities with respect to each one of the components are tested. It is evident that the parameters related to the position of the source term ($\mu_2$ and $\mu_3$) have a major impact on the LDM prediction accuracy. As shown in the first and second plots, the error shows less variability with respect to the diffusivity parameter $\mu_1$, whereas the third plot shows substantial increase in the error magnitude (order of $10^{-1}$) as the boundaries of the parameter space are approached.

\section{Conclusions}
The concept of latent dynamics underlies many of the recently proposed DL-based ROMs addressing data-driven nonlinear order-reduction of time-dependent problems. 
This paradigm has been mainly driven by the adoption of recurrent architectures modeling the time-evolution of the reduced state within the latent space.
However, RNN-based models, and more generally autoregressive ones, often require a temporal context, encoding current and previous high-fidelity FOM's snapshots, not allowing for the resulting ROM to be truly independent of the FOM in terms of online computational costs. 
Furthermore, these approaches are inherently tied to the temporal discretization employed during the offline training phase, limiting their ability to handle different testing temporal discretizations. Consequently, the learned dynamics cannot be interpreted in a continuous-sense, being constrained to a discrete temporal representation. As a result, further training or fine-tuning stages are required when encountering novel temporal discretizations, thereby entailing higher offline computational costs.

Neural ODEs (NODEs), coupled with autoencoders (AEs), present a promising alternative for constructing data-driven ROMs able to overcome these limitations.
Recent studies have demonstrated the time-continuous approximation properties of NODEs in modeling dynamical systems, particularly when higher-order RK schemes are employed, guaranteeing the same error magnitude across refinements of temporal discretization. However, the time-continuous approximation capabilities of the AE-NODE architecture in multi-query settings had not been addressed prior to our research.

To this end, we extended the notion of \textit{time-continuous approximation} towards a data-driven reduced order modeling context, and introduced the mathematical framework of \textit{latent dynamics models} (LDMs) to formalize the AE-NODE architecture, and address its properties.
Specifically, our framework develops along three level of approximation: time-continuous setting (LDM), time-discrete setting ($\dldm$), and learnable setting ($\dldm_\theta$).
Such chain of approximations enabled us to \textit{(i)}  analyze the error sources and derive stability estimates for the LDM approximation in a time continuous setting; \textit{(ii)} address the numerical aspects of the method when integration schemes are employed for the solution of the latent dynamics in a time-discrete setting; \textit{(iii)} derive a time-continuous approximation result in a learnable context when DNNs are employed to approximate the $\dldm$ components.
The resulting framework is characterized by an IVP-structure, requiring only the initial value to evolve the system, rather than a temporal context, leading to a ROM whose computational complexity is fully independent of the FOM's complexity. Additionally, the time-continuous approximation capabilities yield a ROM capable of providing the same level of accuracy for refined temporal discretizations relative to the training discretization.

From an architectural perspective, we propose adopting a fully convolutional structure for both the AE and the parameterized NODE. This approach enables the retention of spatial coherence at the latent level while substantially reducing the network size compared to classical AE-NODE dense architectures. Addressing the specific context of multi-query applications, we introduced a parameterized affine modulation mechanism to inject temporal and parametric information into the convolutional latent dynamics, enabling us to mitigate the additional costs typically associated with hypernetwork-based parameterization approaches.
The numerical and time-continuous approximation properties of the proposed architecture have been tested on high-dimensional dynamical systems arising from the semi-discretization of parameterized nonlinear time-dependent PDEs.

While this work has primarily focused on the temporal aspects of the proposed framework, its scalability with respect to the spatial discretization needs further investigation. Examining the generalization capabilities and computational efficiency across varying levels of spatial resolution is essential for extending the framework's applicability to large-scale problems.
In particular, possible future directions may consider coupling the LDM framework with GNNs to enhance its capability of handling complex domains and three-dimensional geometries. Additionally, the recent advancements in the field of operator learning can be leveraged to define an LDM architecture with continuous approximation capabilities in both the temporal and the spatial domains.

\section*{Acknowledgments}
SF, SB and AM are members of the Gruppo Nazionale Calcolo Scientifico-Istituto Nazionale di Alta Matematica (GNCS-INdAM) and acknowledge the project “Dipartimento di Eccellenza” 2023-2027, funded by MUR. 
SB acknowledges the support of European Union - NextGenerationEU within the Italian PNRR program (M4C2, Investment 3.3) for the PhD Scholarship "Physics-informed data augmentation for machine learning applications". AM and NF acknowledge the Project “Reduced Order Modeling and Deep Learning for the real-time approximation of PDEs (DREAM)” (Starting Grant No. FIS00003154), funded by the Italian Science Fund (FIS) - Ministero dell'Università e della Ricerca. AM and SF acknowledge the project FAIR (Future Artificial Intelligence Research), funded by the NextGenerationEU program within the PNRR-PE-AI scheme (M4C2, Investment 1.3, Line on Artificial Intelligence).

\newpage
\bibliographystyle{plain}
{\small
\bibliography{biblio}

\begin{thebibliography}{100}

\bibitem{adams2003sobolev}
R.A. Adams and J.J.F. Fournier.
\newblock {\em Sobolev Spaces}.
\newblock Academic Press, 2003.

\bibitem{doi:10.1137/22M1481658}
Francesco Andreuzzi, Nicola Demo, and Gianluigi Rozza.
\newblock A dynamic mode decomposition extension for the forecasting of parametric dynamical systems.
\newblock {\em SIAM Journal on Applied Dynamical Systems}, 22(3):2432--2458, 2023.

\bibitem{arnold1978ordinary}
V.I. Arnold and R.A. Silverman.
\newblock {\em Ordinary Differential Equations}.
\newblock MIT Press, 1978.

\bibitem{ayed2019learningdynamicalsystemspartial}
Ibrahim Ayed, Emmanuel de~Bézenac, Arthur Pajot, Julien Brajard, and Patrick Gallinari.
\newblock Learning dynamical systems from partial observations, 2019.

\bibitem{ba2016layer}
Jimmy~Lei Ba, Jamie~Ryan Kiros, and Geoffrey~E. Hinton.
\newblock Layer normalization, 2016.

\bibitem{doi:10.1098/rspa.2023.0422}
Joseph Bakarji, Kathleen Champion, J.~Nathan~Kutz, and Steven~L. Brunton.
\newblock Discovering governing equations from partial measurements with deep delay autoencoders.
\newblock {\em Proceedings of the Royal Society A: Mathematical, Physical and Engineering Sciences}, 479(2276):20230422, 2023.

\bibitem{BARRAULT2004667}
Maxime Barrault, Yvon Maday, Ngoc~Cuong Nguyen, and Anthony~T. Patera.
\newblock An ‘empirical interpolation’ method: application to efficient reduced-basis discretization of partial differential equations.
\newblock {\em Comptes Rendus Mathematique}, 339(9):667--672, 2004.

\bibitem{BennerGugercinWillcox}
Peter Benner, Serkan Gugercin, and Karen Willcox.
\newblock A survey of projection-based model reduction methods for parametric dynamical systems.
\newblock {\em SIAM Review}, 57(4):483--531, 2015.

\bibitem{bhattacharya2021}
Kaushik Bhattacharya, Bamdad Hosseini, Nikola~B Kovachki, and Andrew~M Stuart.
\newblock Model reduction and neural networks for parametric {P}{D}{E}s.
\newblock {\em SMAI J. Comput. Mat.}, 7:121--157, 2021.

\bibitem{bonneville2024comprehensive}
Christophe Bonneville, Xiaolong He, April Tran, Jun~Sur Park, William Fries, Daniel~A Messenger, Siu~Wun Cheung, Yeonjong Shin, David~M Bortz, Debojyoti Ghosh, et~al.
\newblock A comprehensive review of latent space dynamics identification algorithms for intrusive and non-intrusive reduced-order-modeling.
\newblock {\em arXiv preprint arXiv:2403.10748}, 2024.

\bibitem{brandstetter2022message}
Johannes Brandstetter, Daniel~E. Worrall, and Max Welling.
\newblock Message passing neural { {P}{D}{E}} solvers.
\newblock In {\em International Conference on Learning Representations}, 2022.

\bibitem{brivio2023error}
Simone Brivio, Stefania Fresca, Nicola~Rares Franco, and Andrea Manzoni.
\newblock Error estimates for {P}{O}{D}-{D}{L}-{R}{O}{M}s: a deep learning framework for reduced order modeling of nonlinear parametrized {P}{D}{E}s enhanced by proper orthogonal decomposition.
\newblock {\em Adv. Comput. Math.}, 50(33), 2024.

\bibitem{brivio2024ptpi}
Simone Brivio, Stefania Fresca, and Andrea Manzoni.
\newblock {PTPI-DL-ROMs}: Pre-trained physics-informed deep learning-based reduced order models for nonlinear parametrized pdes.
\newblock {\em Computer Methods in Applied Mechanics and Engineering}, 432:117404, 2024.

\bibitem{brunton2016koopman}
Steven~L Brunton, Bingni~W Brunton, Joshua~L Proctor, and J~Nathan Kutz.
\newblock Koopman invariant subspaces and finite linear representations of nonlinear dynamical systems for control.
\newblock {\em PloS one}, 11(2):e0150171, 2016.

\bibitem{doi:10.1073/pnas.1517384113}
Steven~L. Brunton, Joshua~L. Proctor, and J.~Nathan Kutz.
\newblock Discovering governing equations from data by sparse identification of nonlinear dynamical systems.
\newblock {\em Proceedings of the National Academy of Sciences}, 113(15):3932--3937, 2016.

\bibitem{doi:10.1073/pnas.1906995116}
Kathleen Champion, Bethany Lusch, J.~Nathan Kutz, and Steven~L. Brunton.
\newblock Data-driven discovery of coordinates and governing equations.
\newblock {\em Proceedings of the National Academy of Sciences}, 116(45):22445--22451, 2019.

\bibitem{DEIM}
Saifon Chaturantabut and Danny~C. Sorensen.
\newblock Nonlinear model reduction via discrete empirical interpolation.
\newblock {\em SIAM Journal on Scientific Computing}, 32(5):2737--2764, 2010.

\bibitem{NEURIPS2018_69386f6b}
Ricky T.~Q. Chen, Yulia Rubanova, Jesse Bettencourt, and David~K Duvenaud.
\newblock Neural ordinary differential equations.
\newblock In S.~Bengio, H.~Wallach, H.~Larochelle, K.~Grauman, N.~Cesa-Bianchi, and R.~Garnett, editors, {\em Advances in Neural Information Processing Systems}, volume~31. Curran Associates, Inc., 2018.

\bibitem{Chen_2022}
Xing Chen, Flavio~Abreu Araujo, Mathieu Riou, Jacob Torrejon, Dafiné Ravelosona, Wang Kang, Weisheng Zhao, Julie Grollier, and Damien Querlioz.
\newblock Forecasting the outcome of spintronic experiments with neural ordinary differential equations.
\newblock {\em Nature Communications}, 13(1), February 2022.

\bibitem{cicci2023projection}
L.~Cicci, S.~Fresca, S.~Pagani, A.~Manzoni, and A.~Quarteroni.
\newblock Projection-based reduced order models for parameterized nonlinear time-dependent problems arising in cardiac mechanics.
\newblock {\em Mathematics in Engineering}, 5(2):1--38, 2023.

\bibitem{clevert2016fast}
Djork-Arné Clevert, Thomas Unterthiner, and Sepp Hochreiter.
\newblock Fast and accurate deep network learning by exponential linear units (elus), 2016.

\bibitem{conti2023reduced}
Paolo Conti, Giorgio Gobat, Stefania Fresca, Andrea Manzoni, and Attilio Frangi.
\newblock Reduced order modeling of parametrized systems through autoencoders and sindy approach: continuation of periodic solutions.
\newblock {\em Computer Methods in Applied Mechanics and Engineering}, 411:116072, 2023.

\bibitem{Conti_2023}
Paolo Conti, Mengwu Guo, Andrea Manzoni, and Jan~S. Hesthaven.
\newblock Multi-fidelity surrogate modeling using long short-term memory networks.
\newblock {\em Computer Methods in Applied Mechanics and Engineering}, 404:115811, February 2023.

\bibitem{CFL}
R.~{Courant}, K.~{Friedrichs}, and H.~{Lewy}.
\newblock {{\"U}ber die partiellen Differenzengleichungen der mathematischen Physik}.
\newblock {\em Mathematische Annalen}, 100:32--74, January 1928.

\bibitem{dal2019algebraic}
Niccolo Dal~Santo, Simone Deparis, Andrea Manzoni, and Alfio Quarteroni.
\newblock An algebraic least squares reduced basis method for the solution of nonaffinely parametrized stokes equations.
\newblock {\em Computer Methods in Applied Mechanics and Engineering}, 344:186--208, 2019.

\bibitem{dhariwal2021diffusion}
Prafulla Dhariwal and Alex Nichol.
\newblock Diffusion models beat {G}{A}{N}s on image synthesis, 2021.

\bibitem{Di_Sante_2022}
Domenico Di~Sante, Matija Medvidović, Alessandro Toschi, Giorgio Sangiovanni, Cesare Franchini, Anirvan~M. Sengupta, and Andrew~J. Millis.
\newblock Deep learning the functional renormalization group.
\newblock {\em Physical Review Letters}, 129(13), September 2022.

\bibitem{djeumou2022neural}
Franck Djeumou, Cyrus Neary, Eric Goubault, Sylvie Putot, and Ufuk Topcu.
\newblock Neural networks with physics-informed architectures and constraints for dynamical systems modeling.
\newblock In {\em Learning for Dynamics and Control Conference}, pages 263--277. PMLR, 2022.

\bibitem{DUAN2024112621}
Junming Duan and Jan~S. Hesthaven.
\newblock Non-intrusive data-driven reduced-order modeling for time-dependent parametrized problems.
\newblock {\em Journal of Computational Physics}, 497:112621, 2024.

\bibitem{E2017}
Weinan E.
\newblock A proposal on machine learning via dynamical systems.
\newblock {\em Communications in Mathematics and Statistics}, 5(1):1--11, 3 2017.
\newblock Dedicated to Professor Chi-Wang Shu on the occasion of his 60th birthday.

\bibitem{farenga2022neural}
Nicola Farenga, Stefania Fresca, and Andrea Manzoni.
\newblock Neural latent dynamics models.
\newblock In {\em The Symbiosis of Deep Learning and Differential Equations II, NeurIPS}, 2022.

\bibitem{franco2023deep}
Nicola Franco, Andrea Manzoni, and Paolo Zunino.
\newblock A deep learning approach to reduced order modelling of parameter dependent partial differential equations.
\newblock {\em Mathematics of Computation}, 92(340):483--524, 2023.

\bibitem{franco2023geom}
Nicola~Rares Franco, Stefania Fresca, Filippo Tombari, and Andrea Manzoni.
\newblock {Deep learning-based surrogate models for parametrized {P}{D}{E}s: Handling geometric variability through graph neural networks}.
\newblock {\em Chaos: An Interdisciplinary Journal of Nonlinear Science}, 33(12):123121, 12 2023.

\bibitem{franco2023meshinformed}
Nicola~Rares Franco, Andrea Manzoni, and Paolo Zunino.
\newblock Mesh-informed neural networks for operator learning in finite element spaces, 2023.

\bibitem{fresca2021comprehensive}
Stefania Fresca, Luca Dede’, and Andrea Manzoni.
\newblock A comprehensive deep learning-based approach to reduced order modeling of nonlinear time-dependent parametrized {P}{D}{E}s.
\newblock {\em Journal of Scientific Computing}, 87:1--36, 2021.

\bibitem{10.3934/mine.2023096}
Stefania Fresca, Federico Fatone, and Andrea Manzoni.
\newblock Long-time prediction of nonlinear parametrized dynamical systems by deep learning-based reduced order models.
\newblock {\em Mathematics in Engineering}, 5(6):1--36, 2023.

\bibitem{fresca2022pod}
Stefania Fresca and Andrea Manzoni.
\newblock {POD-DL-ROM}: Enhancing deep learning-based reduced order models for nonlinear parametrized {P}{D}{E}s by proper orthogonal decomposition.
\newblock {\em Computer Methods in Applied Mechanics and Engineering}, 388:114181, 2022.

\bibitem{fries2022lasdi}
William~D Fries, Xiaolong He, and Youngsoo Choi.
\newblock La{S}{D}{I}: Parametric latent space dynamics identification.
\newblock {\em Computer Methods in Applied Mechanics and Engineering}, 399:115436, 2022.

\bibitem{Fukami_Murata_Zhang_Fukagata_2021}
Kai Fukami, Takaaki Murata, Kai Zhang, and Koji Fukagata.
\newblock Sparse identification of nonlinear dynamics with low-dimensionalized flow representations.
\newblock {\em Journal of Fluid Mechanics}, 926:A10, 2021.

\bibitem{https://doi.org/10.1111/cgf.13645}
Lawson Fulton, Vismay Modi, David Duvenaud, David I.~W. Levin, and Alec Jacobson.
\newblock Latent-space dynamics for reduced deformable simulation.
\newblock {\em Computer Graphics Forum}, 38(2):379--391, 2019.

\bibitem{Geneva_2020}
Nicholas Geneva and Nicholas Zabaras.
\newblock Modeling the dynamics of {P}{D}{E} systems with physics-constrained deep auto-regressive networks.
\newblock {\em Journal of Computational Physics}, 403:109056, February 2020.

\bibitem{6789445}
Felix~A. Gers, Jürgen Schmidhuber, and Fred Cummins.
\newblock Learning to forget: Continual prediction with lstm.
\newblock {\em Neural Computation}, 12(10):2451--2471, 2000.

\bibitem{NEURIPS2020_a9e18cb5}
Arnab Ghosh, Harkirat Behl, Emilien Dupont, Philip Torr, and Vinay Namboodiri.
\newblock Steer : Simple temporal regularization for neural ode.
\newblock In H.~Larochelle, M.~Ranzato, R.~Hadsell, M.F. Balcan, and H.~Lin, editors, {\em Advances in Neural Information Processing Systems}, volume~33, pages 14831--14843. Curran Associates, Inc., 2020.

\bibitem{girin2020dynamical}
Laurent Girin, Simon Leglaive, Xiaoyu Bie, Julien Diard, Thomas Hueber, and Xavier Alameda-Pineda.
\newblock Dynamical variational autoencoders: A comprehensive review.
\newblock {\em arXiv preprint arXiv:2008.12595}, 2020.

\bibitem{gonzalez2018deep}
Francisco~J Gonzalez and Maciej Balajewicz.
\newblock Deep convolutional recurrent autoencoders for learning low-dimensional feature dynamics of fluid systems.
\newblock {\em arXiv preprint arXiv:1808.01346}, 2018.

\bibitem{greif2019decay}
Constantin Greif and Karsten Urban.
\newblock Decay of the kolmogorov $n$-width for wave problems, 2019.

\bibitem{M2AN_2007__41_3_575_0}
Martin~A. Grepl, Yvon Maday, Ngoc~C. Nguyen, and Anthony~T. Patera.
\newblock Efficient reduced-basis treatment of nonaffine and nonlinear partial differential equations.
\newblock {\em ESAIM: Mod\'elisation math\'ematique et analyse num\'erique}, 41(3):575--605, 2007.

\bibitem{gupta2022multispatiotemporalscale}
Jayesh~K. Gupta and Johannes Brandstetter.
\newblock Towards multi-spatiotemporal-scale generalized {P}{D}{E} modeling, 2022.

\bibitem{guhring2021appx}
Ingo Gühring and Mones Raslan.
\newblock Approximation rates for neural networks with encodable weights in smoothness spaces.
\newblock {\em Neural Networks}, 134:107--130, 2021.

\bibitem{ha2016hypernetworks}
David Ha, Andrew Dai, and Quoc~V. Le.
\newblock Hypernetworks, 2016.

\bibitem{hafner2019learning}
Danijar Hafner, Timothy Lillicrap, Ian Fischer, Ruben Villegas, David Ha, Honglak Lee, and James Davidson.
\newblock Learning latent dynamics for planning from pixels.
\newblock In {\em International conference on machine learning}, pages 2555--2565. PMLR, 2019.

\bibitem{he2016identity}
Kaiming He, Xiangyu Zhang, Shaoqing Ren, and Jian Sun.
\newblock Identity mappings in deep residual networks.
\newblock In {\em Computer Vision--ECCV 2016: 14th European Conference, Amsterdam, The Netherlands, October 11--14, 2016, Proceedings, Part IV 14}, pages 630--645. Springer, 2016.

\bibitem{henrici1962discrete}
P.~Henrici.
\newblock {\em Discrete Variable Methods in Ordinary Differential Equations}.
\newblock Wiley, 1962.

\bibitem{Hesthaven_Pagliantini_Rozza_2022}
Jan~S. Hesthaven, Cecilia Pagliantini, and Gianluigi Rozza.
\newblock Reduced basis methods for time-dependent problems.
\newblock {\em Acta Numerica}, 31:265–345, 2022.

\bibitem{hesthaven2018}
Jan~S. Hesthaven and Stefano Ubbiali.
\newblock Non-intrusive reduced order modeling of nonlinear problems using neural networks.
\newblock {\em J. Comput. Phys.}, 363:55--78, 2018.

\bibitem{hinton1993autoencoders}
Geoffrey~E Hinton and Richard Zemel.
\newblock Autoencoders, minimum description length and helmholtz free energy.
\newblock {\em Advances in neural information processing systems}, 6, 1993.

\bibitem{6795963}
Sepp Hochreiter and Jürgen Schmidhuber.
\newblock Long short-term memory.
\newblock {\em Neural Computation}, 9(8):1735--1780, 1997.

\bibitem{huang2020learning}
Zijie Huang, Yizhou Sun, and Wei Wang.
\newblock Learning continuous system dynamics from irregularly-sampled partial observations.
\newblock {\em Advances in Neural Information Processing Systems}, 33:16177--16187, 2020.

\bibitem{ioffe2015batch}
Sergey Ioffe and Christian Szegedy.
\newblock Batch normalization: Accelerating deep network training by reducing internal covariate shift, 2015.

\bibitem{Kaheman_2022}
Kadierdan Kaheman, Steven~L Brunton, and J~Nathan Kutz.
\newblock Automatic differentiation to simultaneously identify nonlinear dynamics and extract noise probability distributions from data.
\newblock {\em Machine Learning: Science and Technology}, 3(1):015031, mar 2022.

\bibitem{kani2017drrnndeepresidualrecurrent}
J.~Nagoor Kani and Ahmed~H. Elsheikh.
\newblock {DR-RNN}: A deep residual recurrent neural network for model reduction, 2017.

\bibitem{kidger2022neuraldifferentialequations}
Patrick Kidger.
\newblock On neural differential equations, 2022.

\bibitem{Kim_2022}
Youngkyu Kim, Youngsoo Choi, David Widemann, and Tarek Zohdi.
\newblock A fast and accurate physics-informed neural network reduced order model with shallow masked autoencoder.
\newblock {\em Journal of Computational Physics}, 451:110841, February 2022.

\bibitem{kingma2017adam}
Diederik~P. Kingma and Jimmy Ba.
\newblock Adam: A method for stochastic optimization, 2017.

\bibitem{KICIC2023116204}
Ivica Kičić, Pantelis~R. Vlachas, Georgios Arampatzis, Michail Chatzimanolakis, Leonidas Guibas, and Petros Koumoutsakos.
\newblock Adaptive learning of effective dynamics for online modeling of complex systems.
\newblock {\em Computer Methods in Applied Mechanics and Engineering}, 415:116204, 2023.

\bibitem{kolmogorov}
A.~Kolmogoroff.
\newblock Uber die beste annaherung von funktionen einer gegebenen funktionenklasse.
\newblock {\em Annals of Mathematics}, 37(1):107--110, 1936.

\bibitem{Kontolati2024}
Katiana Kontolati, Somdatta Goswami, George Em~Karniadakis, and Michael~D. Shields.
\newblock Learning nonlinear operators in latent spaces for real-time predictions of complex dynamics in physical systems.
\newblock {\em Nature Communications}, 15(1):5101, Jun 2024.

\bibitem{doi:10.1073/pnas.17.5.315}
B.~O. Koopman.
\newblock Hamiltonian systems and transformation in hilbert space.
\newblock {\em Proceedings of the National Academy of Sciences}, 17(5):315--318, 1931.

\bibitem{kovachki2023neural}
Nikola Kovachki, Zongyi Li, Burigede Liu, Kamyar Azizzadenesheli, Kaushik Bhattacharya, Andrew Stuart, and Anima Anandkumar.
\newblock Neural operator: Learning maps between function spaces with applications to {P}{D}{E}s.
\newblock {\em Journal of Machine Learning Research}, 24(89):1--97, 2023.

\bibitem{krishnapriyan2023learning}
Aditi~S. Krishnapriyan, Alejandro~F. Queiruga, N.~Benjamin Erichson, and Michael~W. Mahoney.
\newblock Learning continuous models for continuous physics, 2023.

\bibitem{LARIO2022111475}
Andrea Lario, Romit Maulik, Oliver~T. Schmidt, Gianluigi Rozza, and Gianmarco Mengaldo.
\newblock Neural-network learning of spod latent dynamics.
\newblock {\em Journal of Computational Physics}, 468:111475, 2022.

\bibitem{Lazzara2023}
Michele Lazzara, Max Chevalier, Corentin Lapeyre, and Olivier Teste.
\newblock Neuralode-based latent trajectories into autoencoder architecture for surrogate modelling of parametrized high-dimensional dynamical systems.
\newblock In {\em Artificial Neural Networks and Machine Learning -- ICANN 2023}, pages 497--508, Cham, 2023. Springer Nature Switzerland.

\bibitem{lee2020model}
Kookjin Lee and Kevin~T Carlberg.
\newblock Model reduction of dynamical systems on nonlinear manifolds using deep convolutional autoencoders.
\newblock {\em Journal of Computational Physics}, 404:108973, 2020.

\bibitem{Lee_2021}
Kookjin Lee and Eric~J. Parish.
\newblock Parameterized neural ordinary differential equations: applications to computational physics problems.
\newblock {\em Proceedings of the Royal Society A: Mathematical, Physical and Engineering Sciences}, 477(2253), September 2021.

\bibitem{legaard2022constructing}
Christian~Møldrup Legaard, Thomas Schranz, Gerald Schweiger, Ján Drgoňa, Basak Falay, Cláudio Gomes, Alexandros Iosifidis, Mahdi Abkar, and Peter~Gorm Larsen.
\newblock Constructing neural network-based models for simulating dynamical systems, 2022.

\bibitem{li2024latent}
Zijie Li, Saurabh Patil, Francis Ogoke, Dule Shu, Wilson Zhen, Michael Schneier, John R.~Buchanan Jr., and Amir~Barati Farimani.
\newblock Latent neural {P}{D}{E} solver: a reduced-order modelling framework for partial differential equations, 2024.

\bibitem{LINOT2023111838}
Alec~J. Linot, Joshua~W. Burby, Qi~Tang, Prasanna Balaprakash, Michael~D. Graham, and Romit Maulik.
\newblock Stabilized neural ordinary differential equations for long-time forecasting of dynamical systems.
\newblock {\em Journal of Computational Physics}, 474:111838, 2023.

\bibitem{NEURIPS2023_d529b943}
Phillip Lippe, Bas Veeling, Paris Perdikaris, Richard Turner, and Johannes Brandstetter.
\newblock {P}{D}{E}-refiner: Achieving accurate long rollouts with neural {P}{D}{E} solvers.
\newblock In A.~Oh, T.~Naumann, A.~Globerson, K.~Saenko, M.~Hardt, and S.~Levine, editors, {\em Advances in Neural Information Processing Systems}, volume~36, pages 67398--67433. Curran Associates, Inc., 2023.

\bibitem{pmlr-v80-lu18d}
Yiping Lu, Aoxiao Zhong, Quanzheng Li, and Bin Dong.
\newblock Beyond finite layer neural networks: Bridging deep architectures and numerical differential equations.
\newblock In Jennifer Dy and Andreas Krause, editors, {\em Proceedings of the 35th International Conference on Machine Learning}, volume~80 of {\em Proceedings of Machine Learning Research}, pages 3276--3285. PMLR, 10--15 Jul 2018.

\bibitem{menier2023interpretable}
Emmanuel Menier, Sebastian Kaltenbach, Mouadh Yagoubi, Marc Schoenauer, and Petros Koumoutsakos.
\newblock Interpretable learning of effective dynamics for multiscale systems.
\newblock {\em arXiv preprint arXiv:2309.05812}, 2023.

\bibitem{mucke2021}
Nikolaj~T. Mücke, Sander~M. Bohté, and Cornelis~W. Oosterlee.
\newblock Reduced order modeling for parameterized time-dependent {P}{D}{E}s using spatially and memory aware deep learning.
\newblock {\em J. Comput. Sci.}, 53:101408, 2021.

\bibitem{NEGRI2015431}
Federico Negri, Andrea Manzoni, and David Amsallem.
\newblock Efficient model reduction of parametrized systems by matrix discrete empirical interpolation.
\newblock {\em Journal of Computational Physics}, 303:431--454, 2015.

\bibitem{nichol2021improved}
Alex Nichol and Prafulla Dhariwal.
\newblock Improved denoising diffusion probabilistic models, 2021.

\bibitem{Nonino_2023}
Monica Nonino, Francesco Ballarin, Gianluigi Rozza, and Yvon Maday.
\newblock A reduced basis method by means of transport maps for a fluid–structure interaction problem with slowly decaying kolmogorov $ n $-width.
\newblock {\em Advances in Computational Science and Engineering}, 1(1):36–58, 2023.

\bibitem{otto2019linearly}
Samuel~E Otto and Clarence~W Rowley.
\newblock Linearly recurrent autoencoder networks for learning dynamics.
\newblock {\em SIAM Journal on Applied Dynamical Systems}, 18(1):558--593, 2019.

\bibitem{pant2021}
Pranshu Pant, Ruchit Doshi, Pranav Bahl, and Amir~Barati Farimani.
\newblock Deep learning for reduced order modelling and efficient temporal evolution of fluid simulations.
\newblock {\em Phys. Fluids}, 33(10):107101, 2021.

\bibitem{peherstorfer2022breaking}
Benjamin Peherstorfer.
\newblock Breaking the kolmogorov barrier with nonlinear model reduction.
\newblock {\em Notices of the American Mathematical Society}, 69(5):725--733, 2022.

\bibitem{perez2017film}
Ethan Perez, Florian Strub, Harm de~Vries, Vincent Dumoulin, and Aaron Courville.
\newblock Film: Visual reasoning with a general conditioning layer, 2017.

\bibitem{pfaff2021learning}
Tobias Pfaff, Meire Fortunato, Alvaro Sanchez-Gonzalez, and Peter~W. Battaglia.
\newblock Learning mesh-based simulation with graph networks, 2021.

\bibitem{quarteroni2015reduced}
A.~Quarteroni, A.~Manzoni, and F.~Negri.
\newblock {\em Reduced Basis Methods for Partial Differential Equations: An Introduction}.
\newblock UNITEXT. Springer International Publishing, 2015.

\bibitem{Quarteroni2007}
Alfio Quarteroni, Riccardo Sacco, and Fausto Saleri.
\newblock {\em Numerical Mathematics}.
\newblock Springer New York, 2007.

\bibitem{Quarteroni2008}
Alfio Quarteroni and Alberto Valli.
\newblock {\em Numerical approximation of partial differential equations}.
\newblock Springer Series in Computational Mathematics. Springer, Berlin, Germany, 1 edition, September 2008.

\bibitem{queiruga2020continuousindepth}
Alejandro~F. Queiruga, N.~Benjamin Erichson, Dane Taylor, and Michael~W. Mahoney.
\newblock Continuous-in-depth neural networks, 2020.

\bibitem{Ralston1962}
Anthony Ralston.
\newblock Runge-kutta methods with minimum error bounds.
\newblock {\em Mathematics of Computation}, 16(80):431–437, 1962.

\bibitem{romor2023non}
Francesco Romor, Giovanni Stabile, and Gianluigi Rozza.
\newblock Non-linear manifold reduced-order models with convolutional autoencoders and reduced over-collocation method.
\newblock {\em Journal of Scientific Computing}, 94(3):74, 2023.

\bibitem{NEURIPS2019_42a6845a}
Yulia Rubanova, Ricky T.~Q. Chen, and David~K Duvenaud.
\newblock Latent ordinary differential equations for irregularly-sampled time series.
\newblock In H.~Wallach, H.~Larochelle, A.~Beygelzimer, F.~d\textquotesingle Alch\'{e}-Buc, E.~Fox, and R.~Garnett, editors, {\em Advances in Neural Information Processing Systems}, volume~32. Curran Associates, Inc., 2019.

\bibitem{RYCKELYNCK2005346}
D.~Ryckelynck.
\newblock A priori hyperreduction method: an adaptive approach.
\newblock {\em Journal of Computational Physics}, 202(1):346--366, 2005.

\bibitem{san2019artificial}
Omer San, Romit Maulik, and Mansoor Ahmed.
\newblock An artificial neural network framework for reduced order modeling of transient flows.
\newblock {\em Comm. Nonlin. Sci. Numer. Simul.}, 77:271--287, 2019.

\bibitem{schmid2010dynamic}
Peter~J Schmid.
\newblock Dynamic mode decomposition of numerical and experimental data.
\newblock {\em Journal of fluid mechanics}, 656:5--28, 2010.

\bibitem{Sholokhov2023}
Aleksei Sholokhov, Yuying Liu, Hassan Mansour, and Saleh Nabi.
\newblock Physics-informed neural ode (pinode): embedding physics into models using collocation points.
\newblock {\em Scientific Reports}, 13(1):10166, Jun 2023.

\bibitem{Thomee2006}
Vidar Thomee.
\newblock {\em Galerkin finite element methods for parabolic problems}.
\newblock Springer Series in Computational Mathematics. Springer, Berlin, Germany, 2 edition, July 2006.

\bibitem{TORZONI2023110376}
Matteo Torzoni, Andrea Manzoni, and Stefano Mariani.
\newblock A multi-fidelity surrogate model for structural health monitoring exploiting model order reduction and artificial neural networks.
\newblock {\em Mechanical Systems and Signal Processing}, 197:110376, 2023.

\bibitem{vaswani2023attention}
Ashish Vaswani, Noam Shazeer, Niki Parmar, Jakob Uszkoreit, Llion Jones, Aidan~N. Gomez, Lukasz Kaiser, and Illia Polosukhin.
\newblock Attention is all you need, 2023.

\bibitem{vlachas2022multiscale}
Pantelis~R Vlachas, Georgios Arampatzis, Caroline Uhler, and Petros Koumoutsakos.
\newblock Multiscale simulations of complex systems by learning their effective dynamics.
\newblock {\em Nature Machine Intelligence}, 4(4):359--366, 2022.

\bibitem{doi:10.1098/rspa.2017.0844}
Pantelis~R. Vlachas, Wonmin Byeon, Zhong~Y. Wan, Themistoklis~P. Sapsis, and Petros Koumoutsakos.
\newblock Data-driven forecasting of high-dimensional chaotic systems with long short-term memory networks.
\newblock {\em Proceedings of the Royal Society A: Mathematical, Physical and Engineering Sciences}, 474(2213):20170844, 2018.

\bibitem{vlachas2021accelerated}
Pantelis~R Vlachas, Julija Zavadlav, Matej Praprotnik, and Petros Koumoutsakos.
\newblock Accelerated simulations of molecular systems through learning of effective dynamics.
\newblock {\em Journal of Chemical Theory and Computation}, 18(1):538--549, 2021.

\bibitem{vlachas2020backpropagation}
Pantelis-Rafail Vlachas, Jaideep Pathak, Brian~R Hunt, Themistoklis~P Sapsis, Michelle Girvan, Edward Ott, and Petros Koumoutsakos.
\newblock Backpropagation algorithms and reservoir computing in recurrent neural networks for the forecasting of complex spatiotemporal dynamics.
\newblock {\em Neural Networks}, 126:191--217, 2020.

\bibitem{WANG2019289}
Qian Wang, Jan~S. Hesthaven, and Deep Ray.
\newblock Non-intrusive reduced order modeling of unsteady flows using artificial neural networks with application to a combustion problem.
\newblock {\em Journal of Computational Physics}, 384:289--307, 2019.

\bibitem{wang2020recurrent}
Qian Wang, Nicolò Ripamonti, and Jan~S. Hesthaven.
\newblock Recurrent neural network closure of parametric {P}{O}{D}-galerkin reduced-order models based on the mori-zwanzig formalism.
\newblock {\em J. Comput. Phys.}, 410:109402, 2020.

\bibitem{https://doi.org/10.1002/fld.4416}
Z.~Wang, D.~Xiao, F.~Fang, R.~Govindan, C.~C. Pain, and Y.~Guo.
\newblock Model identification of reduced order fluid dynamics systems using deep learning.
\newblock {\em International Journal for Numerical Methods in Fluids}, 86(4):255--268, 2018.

\bibitem{watter2015embed}
Manuel Watter, Jost Springenberg, Joschka Boedecker, and Martin Riedmiller.
\newblock Embed to control: A locally linear latent dynamics model for control from raw images.
\newblock {\em Advances in neural information processing systems}, 28, 2015.

\bibitem{wen2023reduced}
Tianshu Wen, Kookjin Lee, and Youngsoo Choi.
\newblock Reduced-order modeling for parameterized {P}{D}{E}s via implicit neural representations.
\newblock {\em arXiv preprint arXiv:2311.16410}, 2023.

\bibitem{https://doi.org/10.1111/cgf.13620}
S.~Wiewel, M.~Becher, and N.~Thuerey.
\newblock Latent space physics: Towards learning the temporal evolution of fluid flow.
\newblock {\em Computer Graphics Forum}, 38(2):71--82, 2019.

\bibitem{wu2022learning}
Tailin Wu, Takashi Maruyama, and Jure Leskovec.
\newblock Learning to accelerate partial differential equations via latent global evolution.
\newblock {\em Advances in Neural Information Processing Systems}, 35:2240--2253, 2022.

\bibitem{yin2022continuous}
Yuan Yin, Matthieu Kirchmeyer, Jean-Yves Franceschi, Alain Rakotomamonjy, and Patrick Gallinari.
\newblock Continuous {P}{D}{E} dynamics forecasting with implicit neural representations.
\newblock {\em arXiv preprint arXiv:2209.14855}, 2022.

\end{thebibliography}
}

\appendix
\newpage
\section{Additional proofs}
\label{sec:additional_proofs}
\subsection{Proof of Proposition 3.3}
\begin{proof}
Considering the LDM formulation over $[t_0,T]$, namely
\begin{equation*}
    \tbu_h(t;\bmu) = \Psi'\bigg(\Psi(\bu_{0,h}(\bmu)) -  \int_{t_0}^{t} \bff_n(s, \bu_n(s;\bmu); \bmu)ds \bigg) = \Psi'(\bu_n(t;\bmu)),
\end{equation*}
we observe that, thanks to the chain rule, the following IVP is satisfied
\begin{equation*}
\left\{
\begin{aligned}
    &\partial_t \tbu_h(t;\bmu) = \mathbf{J}_{\Psi',\bu_n}(t;\bmu) \partial_t \bu_n(t;\bmu) = \mathbf{J}_{\Psi',\bu_n}(t;\bmu) \bff_n(t, \bu_n(t;\bmu);\bmu), \quad t \in (t_0,T],  \\
    &\tbu_h(0;\bmu) = \Psi' \circ \Psi(\bu_{0,h}(\bmu)).
\end{aligned}
\right.
\end{equation*}
Thus, it is trivial to conclude
\begin{equation*}
\begin{aligned}
    \|\bu_h(t;\bmu) & - \tbu_h(t;\bmu)\|
     \\
    & = \bigg\|\bu_{0,h}(\bmu) + \int_{t_0}^{t} \bff_h(s, \bu_h(s;\bmu); \bmu) ds - \Psi' \circ \Psi (\bu_{0,h}(\bmu)) -  \int_{t_0}^{t} \mathbf{J}_{\Psi',\bu_n}(s;\bmu) \bff_n(s, \bu_n(s;\bmu); \bmu)ds\bigg\| \\
    &\le \| \bu_{0,h}(\bmu) - \Psi' \circ \Psi (\bu_{0,h}(\bmu)) \| + \bigg\| \int_{t_0}^{t} [\bff_h(s, \bu_h(s;\bmu); \bmu) - \mathbf{J}_{\Psi',\bu_n}(s;\bmu) \bff_n(s, \bu_n(s;\bmu); \bmu)]ds \bigg\|\\
    &\le \| \bu_{0,h} - \Psi' \circ \Psi (\bu_{0,h}) \|_{L^\infty(\mathcal{P}; \mathbb{R}^{N_h})} + |T-t_0| \|\bff_h(\bu_h) - \mathbf{J}_{\Psi',\bu_n} \bff_n(\bu_n)) \|_{L^\infty([t_0,T] \times \mathcal{P}; \mathbb{R}^{N_h})}.
\end{aligned}
\end{equation*}
\end{proof}

\newpage
\section{Continuous-depth and continuous-time paradigms}
\label{appendix:continuous-time}
In the context of deep learning, the notion of \textit{continuous} architectures has been recently introduced. In particular, we have to make a distinction between the strictly related concepts of \textit{continuous-depth} and \textit{continuous-time} models.

\paragraph{Continuous-depth.} \textit{Continuous-depth} architectures \cite{queiruga2020continuousindepth}, likewise \textit{neural ordinary differential equations} \cite{NEURIPS2018_69386f6b}, have been introduced by means of continuous limits of \textit{residual neural networks}, by considering the correspondence between the ResNets' residual formulation and forward Euler scheme \cite{pmlr-v80-lu18d}.
In particular, by considering the general case where we have to model input-output relations, considering feature-target pairs $(x,y)$, a ResNet with $L$ layers takes the form
\begin{equation}
    \label{eq:resnet}
    x\mapsto h_1 \mapsto h_2 \mapsto \cdots \mapsto h_L \mapsto \hat{y},
\end{equation}
where $\mapsto$ are residual blocks of the form $h_{l+1} = h_{l} + f_\theta(h_l,l)$, so the whole network reads as
\begin{equation*}
\begin{cases}
     \hat{y} = h_{L+1},\\
     h_{l+1} = h_{l} + f_\theta(h_l,l), & l=0,\ldots,L,\\
     h_0 = x,\\
\end{cases}
\end{equation*}
where $f_\theta(\cdot,l)$ represents the inner learnable block of weights $\theta$ of the $l$-th layer.
In this view, the discrete scheme outlined above may be seen as a discretization, by means of the explicit Euler scheme with $\Dt = 1$, of a continuous problem on $[0,T]$, reading as
\begin{equation*}
    \begin{cases}
        \hat{y} = h(T),\\
        \dot{h}(t) = f_\theta(t,h(t)), & t\in(0,T),\\
        h(0) = x.
    \end{cases}
\end{equation*}
Thus, via the above continuous formulation, any numerical method for the solution of ODEs can now be employed for integrating the network $f_\theta(t,h(t))$, and retrieving the final output $y = h(T)$, where $h(t)$ is now a continuously evolving hidden-state. The arbitrariness of the employed discretization of the integration interval leads to the concept of \textit{continuous-depth}, since, in this context, employing a finer time-discretization would mean taking a larger number of function evaluations within the numerical solver, i.e., a higher number of intermediate hidden states, naively corresponding to a \textit{deeper} residual architecture. Thus, the continuous-depth nature of such architecture  resides in: (i) the continuous nature of the variable now indexing the hidden layers $t\in\mathbb{R}$, rather than discrete indexes $l\in\mathbb{N}$, (ii) and in their continuous evolution, by construction.
We remark that, in this case, we are not interested in the evolution of the hidden state $h(t)$, but only in the final output $\hat{y}=h(T)$. Moreover, the fact that the hidden state now evolves continuously accordingly to an ODE, means that the relation $x = h(0) \mapsto h(T) = y$ is being modeled by a homeomorphism, i.e., a one-to-one continuous mapping with continuous inverse.

\paragraph{Continuous-time.}
The dynamical nature of continuous-depth architectures enables their employment in the context of time-series modeling, leading to the concept of \textit{continuous-time} models, to model relations between input-target pairs of the form $(x,y) = (x_0, \{x_1,...,x_n\})$ in a \textit{recurrent} manner, where the target is made by $n$ sampled steps of a continuous trajectory over $[0,T]$. In this setting, the sampled state $x(t)$ can be modeled continuously by means of $\hat{x}(t)$, evolving accordingly to the following \textit{neural} ODE
\begin{equation}
    \begin{cases}
        \dot{\hat{x}}(t) = f_\theta(t,\hat{x}(t)), & t\in(0,T),\\
        \hat{x}(0) = x_0.
    \end{cases}
    \label{eq:cont_time}
\end{equation}
where $f_\theta$ is the neural network parameterizing its dynamics. In particular, \eqref{eq:cont_time} can be seen as a continuous formulation of a recurrent neural network, modeling the evolution of $x(t)$. In such setting, since the whole trajectory is now the target, the objective being minimized is $\sum_{i=1}^n\|x(t_i) - \hat{x}(t_i)\|^2$, meaning that the prediction is being unrolled over the discretized interval, as in recurrent settings. 

\newpage
\section{Notation}

\subsection{Norms}
Throughout the paper we indicate by $\|\cdot\|$ the Euclidean norm, typically employed in either $\mathbb{R}^{N_h}$ or $\mathbb{R}^{n}$, for vectors lying in the high-dimensional space (e.g. $\bu_h, \hat{\bu}_h$) or in the latent space  (e.g. $\bu_n$), respectively.

Regarding functional spaces, we rely on Sobolev spaces $W^{k,p}(\Omega)$, and specifically on their norms. In the following their definition, accordingly to \cite{adams2003sobolev}, is provided.

\begin{definition}
Let $\Omega\subset \mathbb{R}^d$ be a domain, $1\leq p\leq\infty$, $k\in\mathbb{N}$, the Sobolev space $W^{k,p}(\Omega;\mathbb{R})$ is defined as
\begin{equation*}
    W^{k,p}(\Omega;\mathbb{R}) = \big\{f:\Omega \rightarrow \mathbb{R},\  D^\alpha f \in L^p(\Omega), \ \forall \alpha \in \mathbb{N}_0^d, \ |\alpha|\leq k\big\},
\end{equation*}
equipped with the following norm
\begin{equation*}
    \|f\|_{W^{k,p}(\Omega;\mathbb{R})} \coloneq \begin{cases}
        \Bigg(\sum_{|\alpha|\leq k}\|D^\alpha f\|_{L^p(\Omega)}^p\Bigg)^{1/p},  & 1\leq p < \infty,\\
        \\
        \max_{|\alpha|\leq k}\|D^\alpha f\|_{L^\infty(\Omega)}, & p = \infty,
    \end{cases}
\end{equation*}
where $D^\alpha$ indicates the weak (distributional) derivative of order $\alpha$.
\end{definition}

\noindent We note that we use $L^p$ instead of $W^{0,p}$ throughout the length of the manuscript for the sake of readability.

\subsection{Symbols}
For the sake of readability, in the following, we report a table summarizing the notation employed within the manuscript.
\begin{center}
\center
\begin{tabularx}{\textwidth}{@{}ll@{}}
\toprule
\textbf{Symbol} & \textbf{Description}\\ \midrule
$\bu_h(t;\bmu)$ & Full-order model state \\
$\bff_h(t,\bu_h(t;\bmu) ;\bmu)$ & Full-order model vector field \\
$\tbu_h(t;\bmu)$ & LDM full-order state approximation \\
$\bu_n(t;\bmu)$ & LDM latent state \\
$\bff_n(t,\bu_n(t;\bmu) ;\bmu)$ & LDM latent vector field \\
$\tilde{\bz}_h(t;\bmu)$ & Perturbed LDM full-order state approximation \\
$\bz_n(t;\bmu)$ & Perturbed LDM latent state \\
$\tbu^k_h(\bmu)$ & $\Delta$LDM full-order state approximation at $t_k$ \\
$\bu^k_n(\bmu)$ & $\Delta$LDM latent state at $t_k$\\ 
$\tilde{\bz}^k_h(\bmu)$ & Perturbed $\Delta$LDM full-order state approximation at $t_k$\\
$\bz^k_n(\bmu)$ & Perturbed $\Delta$LDM latent state at $t_k$\\ 
$\hbu_h(t;\bmu)$ & Learned $\text{LDM}_\theta$ full-order state approximation \\
$\hbu_n(t;\bmu)$ & Learned $\text{LDM}_\theta$ latent state \\
$\hbu_h^k(\bmu)$ & Learned $\dldm_\theta$ full-order state approximation at $t_k$\\
$\hbu_n^k(\bmu)$ & Learned $\dldm_\theta$ latent state at $t_k$\\ 
$\bff_{n,\theta}(t, \ \cdot \ ;\bmu)$ & Learned $(\Delta)\text{LDM}_\theta$ latent vector field \\
\bottomrule
\end{tabularx}
\end{center}

\end{document}